\newcommand{\fin}{\hspace*{\fill}$\square$\vspace*{2mm}}
\theoremstyle{plain}
\newtheorem{theorem}{Theorem}[section]
\newtheorem{lemma}[theorem]{Lemma}
\newtheorem{proposition}[theorem]{Proposition}
\newtheorem{propdef}[theorem]{Proposition/Definition}
\newtheorem{corollary}[theorem]{Corollary}
\theoremstyle{definition}
\newtheorem{definition}[theorem]{Definition}
\theoremstyle{remark}
\newtheorem{remark}[theorem]{\sc Remark}
\newtheorem{example}[theorem]{\sc Example}
\newcommand{\dist}{{\rm{dist}}}
\newcommand{\Cone}{{\rm{Cone\hspace{2pt}}}}
\newcommand{\Sing}{{\rm{Sing\hspace{2pt}}}}
\newcommand{\Atyp}{{\rm{Atyp\hspace{2pt}}}}
\newcommand{\Jac}{{\rm{Jac\hspace{2pt}}}}
\newcommand{\im}{{\rm{Im\hspace{2pt}}}}
\newcommand{\Sp}{{\rm{Sp}}}
\newcommand{\Va}{{\rm{Va}}}
\newcommand{\mult}{{\rm{mult}}}
\newcommand{\ind}{\mathop{\rm{ind}}}
\newcommand{\lot}{\mathop{\rm{l.o.t.}}}
\newcommand{\ord}{\mathop{\rm{ord}}}
\newcommand{\ori}{\mathop{\rm{or}}}
\newcommand{\ity}{{\infty}}
\newcommand{\e}{\varepsilon}
\newcommand{\m}{\setminus}
\newcommand{\red}{{\mathrm{red}}}
\def\bC{{\mathbb C}}
\def\bK{{\mathbb K}}
\def\bP{{\mathbb P}}
\def\bR{{\mathbb R}}
\def\cA{{\mathcal A}}
\def\cC{{\mathcal C}}
\def\cS{{\mathcal S}}
\def\Atyp{{\mathrm{Atyp\hspace{1pt}}}}
\def\cL{{\mathcal L}}
\def\ity{\infty}
\def\grad{\mathrm{{grad\hspace{1pt}}}}
\def\ind{\mathrm{ind}}
\def\e{{\varepsilon}}
\def\m{{\setminus}}
\begin{document}

\title[Global index of  real polynomials]{Global index of  real polynomials}

  \author{\sc Gabriel E. Monsalve}  

\address{Universidade de S\~{a}o Paulo-ICMC,  S\~{a}o Carlos, Brasil, 
 and Univ. Lille, CNRS,  UMR 8524 - Laboratoire Paul Painlev\'e, F-59000 Lille, France. }

\email{esteban.monsalve@usp.br}

\author{\sc Mihai Tib\u{a}r}

\address{Univ. Lille, CNRS, UMR 8524 - Laboratoire Paul Painlev\'e, F-59000 Lille, France.}

\email{mihai-marius.tibar@univ-lille.fr}

\thanks{G. Monsalve was supported by the FAPESP grants  2019/24377-2  and  2020/14111-2. \\
M. Tib\u ar acknowledges the support of the Labex CEMPI grant (ANR-11-LABX-0007-01), and the support of the project
 ``Singularities and Applications'' -  CF 132/31.07.2023 funded by the European Union - NextGenerationEU - through Romania's National Recovery and Resilience Plan.}

\subjclass[2010]{57R70, 58K60, 55R55, 14H20}


\keywords{polynomial functions, atypical fibres,  index of the gradient}

\begin{abstract}
We develop two methods for expressing the global index of  the gradient  of a 2 variable polynomial function $f$: in terms of the atypical fibres of $f$, and in terms of the clusters of Milnor arcs at infinity. These allow us  to derive upper bounds for the global index, in particular refining the one that was found by Durfee in terms of the degree of $f$.
\end{abstract}
\maketitle

\section{Introduction}

 The index of a vector field with isolated zeroes enters in the celebrated Poincar\'e-Hopf theorem which holds for compact manifolds and has been extended in various directions, either in real or in  complex geometry. 
The index  of the gradient vector field $\grad f$ along the boundary of a large disk which includes all the singularities of a polynomial function $f : \bR^{2} \to\bR$ of degree $d\ge 2$ is the ``global index'', or the  ``index at infinity'' of $f$, denoted $\ind_\ity f$. 
The study of this index at infinity originates, as far as we know,  in Durfee's paper \cite{Dur2}, and is addressed in several other papers, see e.g. \cite{Se}, \cite{Se2}, \cite{Gw1}.

  In the local setting, 
 Arnold's index theorem  \cite{Ar} asserts that the index at an isolated singular point $p\in C$ of a real plane curve $C :=\{g=0\}$ with  $r$ branches satisfies the equality  $\ind_{p} (\grad g )= 1-r$.   In the complex setting, for a holomorphic function germ $h$ of $n\ge 2$ variables and with isolated singularity,  it is well-known that the local index $\ind_{p} (\grad h )$ equals the  \emph{Minor number} $\mu_{h}$,  which has several topological and algebraic interpretations.

\smallskip

We address here the problem of computing the index at infinity of $f$  by using the topological behaviour of $f$ at infinity, and more precisely its atypical fibres.   
  Let us briefly recall (cf Definition \ref{d:atypical}) that a fibre $f^{-1}(\lambda)$ of a polynomial function $f:\bK^{n} \to \bK$, where $\bK =\bR$ or $\bC$,  is \emph{typical} if  $f$ is a trivial C$^{\ity}$-fibration over some neighbourhood of $\lambda$, and that the \emph{bifurcation locus} of $f$  is the minimal subset $\Atyp f\subset \bK$ such that the restriction $f_{|}:  \bK^{n}\m \Atyp f \to \bK$ is a locally trivial C$^{\ity}$-fibration.   
   The case of  2 variables is the only one where a complete characterisation of $\Atyp f$ is known, e.g. by Suzuki's study \cite{Su}. Motivated  by the Jacobian Conjecture,  \cite{Su} treated the complex setting showing that the variation of the Euler characteristic detects precisely which regular fibre is atypical or not; see also the subsequent contribution \cite{HaLe}, and in more general settings \cite{HNt}, \cite{JoTi2}. The real setting is more delicate, and a complete characterisation in 2 variables occurred more than two decades later \cite{TZ}, see also the subsequent contributions,  e.g. \cite{CP},  \cite{HNl}, \cite{DTT},  \cite{DJT}.


Our study uses a method which permits to approach Arnold's local index formula as well as the more complex computation of the index at infinity, namely the  \emph{Milnor locus of $f$} (Definition \ref{d:milnorser}). This has already been used by several authors in the study of the topology of function germs, probably starting with Milnor's lecture notes \cite{Mi1}, and  in the study of  the change of topology of fibres of polynomials at infinity, cf \cite{NZ},  \cite{Ti-reg}, \cite{DiTi}, \cite{DJT}.
We introduce Milnor arcs at infinity (in Section \ref{s:arcs}),  define the clusters of such arcs,  and show how these clusters detect the bifurcation locus of $f$ via the phenomena of splitting and vanishing at infinity. 
Our first main result, Theorem \ref{t:main1}, tells that $\ind_\ity f$ can be expressed 
in terms of the numbers of vanishing and splitting components of the fibres of $f$.

 The Milnor arcs at some point $p$ on the line at infinity of the projective compactification $\bP^{2} \supset \bR^{2}$ come with an index, which may be $+\frac12$, $0$ or $-\frac12$, and their sum defines the local index at infinity $i_{p}$. Our key Lemma \ref{l:d_{p}} highlights the inequality   
 $i_{p}\le d_{p}-1$
 observed by  Durfee in \cite{Dur2},  where $d_{p}$ is the order at the point $p$ of the top degree part $f_{d}$ of $f$. We establish the origin of  the ``gaps'' which produce the difference between the two sides, we classify these gaps and we explain how to track down their occurrence.
 
  It is not difficult to show that  $| \ind_\ity f | \le d-1$  (by Bezout's theorem), and
 that the lower bound  $1-d$  is realised for instance  by a generic arrangement of lines.\footnote{Notice that this is far from satisfying the Poincar\'e-Hopf theorem, due to the non-compacity of  $\bR^2$.} 
Durfee \cite{Dur2}  proved  the inequality:  
\begin{equation}\label{eq:durfee0}
  \ind_\ity f \le \max\{1,  d-3\}
\end{equation}
and raised the problem of estimating a better upper bound, since many examples show that the index at infinity may be quite far from $d-3$.

  In order to get a grip on sharper upper bounds one has to consider more invariants than just the degree of the polynomial. To this aim, we use the knowledge on the intrinsically defined atypical values and atypical points at infinity.   Theorem \ref{t:upbound}  exploits the classification  provided by Lemma \ref{l:d_{p}} and casts new invariants into a formula which lowers the bound \eqref{eq:durfee0} found by Durfee in  \cite{Dur2}, as follows:
\[
  \ind_{\infty}(f) \le 1 + d_{Re} -  2|\cL_{f}| - \sum_{p\in  L^{\ity}\cap \{f_{d} =0\}} \left(\frac12 \Bigl \lfloor  \frac{\deg(R_{p}^{\red})-1}{2} \Bigr \rfloor + \deg(S_{p}) + \deg(K_{p}) \right),
\]
where $d_{Re}$ is the real degree of $f$,  $\cL_{f}$ is the set of points at infinity of $f$,   $L^{\ity}$ is the line at infinity and  $R_{p}, S_{p}, K_{p}$ are certain subvarieties of the tangent cone to the Milnor set at $p\in L^{\ity}$ (cf \S \ref{upperbd} for all these definitions and notations). Corollary  \ref{c:upbound2} offers two simpler upper bound estimations.

It appears that the case of a single point at infinity is responsible for the indices closest to Durfee's bound  \eqref{eq:durfee0}. In \S\ref{s:durfee} we revisit and complete Durfee's proof of \eqref{eq:durfee0} in \cite{Dur2}, and we clarify a couple of shadow points in it. Our Theorem \ref{t:l=1} is a slight improvement, in particular we show:
\emph{If $f$ has a single point at infinity, then $\ind_{\infty}(f) \le d-3$ for $d \ge 4$, and $\ind_{\infty}(f) \le 0$ for $d\le 3$}.  
  
  Section \ref{s:fibres} concerns properties of fibres of polynomials and their connected components. We review some of the Durfee results \cite{Dur2}  and solve some unclear points in his paper, see for instance Lemma \ref{l:circle}, the remark and the corollary following it.
  
  In  Section \ref{s:arcs} we provide the necessary preparation for the statements and proofs of Theorem \ref{t:main1}, Lemma \ref{l:d_{p}} and Theorem \ref{t:upbound}.    In particular we show the key Propositions
   \ref{p:vantangency} and \ref{l:tangent-nontangent}.    Section \ref{examples} contains examples with explicit computations of the index at infinity and of the ingredients of our main formulas.
  
  \
  
\noindent \emph{Acknowledgement.} Special thanks to the anonymous referee for the pertinent comments and remarks that helped us to improve several proofs.
  

  \section{Fibres of polynomials}\label{s:fibres}

Let $f : \bR^{2} \to\bR$ be a polynomial function of degree $d\ge 2$, and let $f_{d}$ denote its degree $d$ homogeneous part.

\begin{definition}\label{d:atypical}
We say that $\lambda \in \bR$ is a \emph{typical value}, or that $f^{-1}(\lambda)$ is a \emph{typical fibre},  of $f$ if the restriction $f_{|}: f^{-1}(D) \to D$ is a trivial C$^{\ity}$-fibration, for some small enough disk $D\subset \bR$ centred at $\lambda$. We also say that $\lambda \in \bR$ is a \emph{typical value of  $f$ at infinity} if there is a disk $D\subset \bR$ centred at $\lambda$ and a large enough ball $B\subset \bR^{2}$ centred at the origin, such that the  restriction $f_{|}: f^{-1}(D) \cap (\bR^{2}\m B) \to D$ is a trivial fibration. 

 The set  $\Atyp f$   of \emph{atypical values of  $f$}  (also called the \emph{bifurcation set of $f$}) is the minimal subset of $\bR^{2}$ which contains the critical set $f(\Sing f)$ and such that the restriction $f_{|}:  \bR^{2}\m \Atyp f \to \bR$ is a locally trivial C$^{\ity}$-fibration.
\end{definition}

In more than 2 variables there is no complete characterisation of atypical values and the studies focused on finding effective approximations of the bifurcation locus, in particular finding upper bounds for the number of atypical values in terms of the degree (and possibly of some other data), cf \cite{JK},  \cite{Je}, \cite{DiTi}, \cite{JeTi2}. The problem of finding such upper bounds is equally important in 2 variables, see e.g. \cite{LO}, \cite{GP}, \cite{Gw}, \cite{JeTi}. 

 Let $X:= \{ \tilde f(x,y, z) -tz^{d} =0 \} \subset   \bP^{2}\times \bR $, where $\tilde f$ denotes the homogenization of $f$ of degree $d$. Let $\tau : X \to \bR$ be the projection on  the second factor, and let $X_{t}:= \tau^{-1}(t)$ be the fibre of $\tau$ over $t$. Let $L^{\ity} := \{z=0\}\simeq \bP^{1}$ be the line at infinity of  $\bP^{2}$. The part at infinity $X^{\ity}:= X \cap (L^{\ity}\times \bR) = \{ f_{d} =0\} \times \bR$ consists of finitely many lines.
  The algebraic space $X$ may be endowed with a Whitney stratification such that $X^{\ity}$ is a union of strata. This consists of the open stratum $\bR^{2}\subset X$ of dimension 2, and the finitely many strata of $X^{\ity}$ which are either of dimension 1 or of dimension 0. Let us denote by $\cS_{0}$ the finite set which is the union of these strata of dimension 0.    

    Then $X_{t}\subset  \bP^{2}$ is a closed set and contains the closure $\overline{F_{t}}$ of the fibre $F_{t} := f^{-1}(t)$. 
  The part at infinity $X_{t}\cap L^{\ity} =\{ f_{d} =0\}\subset \bP^{1}$ is a finite set and it is independent of $t\in \bR$.
We have the inclusion $\{ f_{d} =0\} \supset \overline{F_{t}}\cap L^{\ity}$, which may be strict, like in the example  $f = x^{2}+y^{4}$ where we have $\overline{F_{t}} \cap L^{\ity} = \emptyset$, but $X_{t} \cap L^{\ity}=[1:0]$ for all $t\in \bR$.
   

 The map $\tau : X \to \bR$ has as singular locus $\Sing \tau = \cS_{0} \cup \Sing f$. In particular the set $\tau(\Sing \tau)$ of critical values of $\tau$ is finite, and  it is well-known, see e.g. \cite{Ti-reg}, \cite{TZ}, \cite{Ti-book}, that we have the inclusion $\Atyp f \subset \tau(\Sing \tau)$.

  The set $\bR\m \Atyp f$ is therefore a union of intervals, two of which are unbounded (and they coincide with $\bR$ in case $\Atyp f = \emptyset$). Let us denote the infinite intervals by $I_{+}$ (the one towards $+\infty$),  and  by $I_{-}$ (the one towards $-\infty$).  Consequently $f_{|} : f^{-1}(I) \to I$ is a trivial fibration, where $I$ is either $I_{+}$ or $I_{-}$.
 
Let then $F_{-}$ and $F_{+}$ denote the fibre of $f$ over some point of $I_{-}$, and of $I_{+}$, respectively.

\begin{lemma}\label{l:circle}
   Assume that the fibre $F_{+}$ contains a compact component $C$. Then $F_{+} =C$ and $F_{-}$ is empty, and 
moreover, all the non-empty fibres of $f$ are compact.

 The same statement holds if we switch the roles of  $F_{-}$  and $F_{+}$.
\end{lemma}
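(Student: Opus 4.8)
The plan rests on three facts: since $t_{+}\in I_{+}$, every value $>t_{+}$ is regular (critical values lie in $\Atyp f\subset(-\ity,a]$ where $I_{+}=(a,\ity)$); the restriction $f_{|}\colon f^{-1}(I_{+})\to I_{+}$ is a trivial fibration; and a compact component of a regular fibre is an embedded circle bounding a disk. First I would analyse the disk bounded by $C$. As $t_{+}$ is regular, $F_{+}$ is a closed $1$-submanifold and $C$ is a smooth circle, so by the Jordan curve theorem it bounds a compact disk $\Omega$. On $\overline{\Omega}$ the maximum of $f$ cannot be attained at an interior point with value $>t_{+}$, for such a point would be critical with critical value $>t_{+}$, impossible since $(t_{+},\ity)\subset I_{+}$ has no critical value; hence $f\le t_{+}$ on $\overline{\Omega}$, and regularity of $t_{+}$ forbids an interior point with $f=t_{+}$. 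Thus $f<t_{+}$ on $\operatorname{int}\Omega$ and, because $\grad f\neq 0$ points towards increasing values, $f>t_{+}$ on a punctured exterior collar of $C$.

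Next I would transport $C$ by the trivialisation $\Phi\colon F_{+}\times I_{+}\xrightarrow{\ \sim\ } f^{-1}(I_{+})$, with $f(\Phi(z,t))=t$. For $t\in I_{+}$ set $C_{t}:=\Phi(C\times\{t\})$, a circle component of $f^{-1}(t)$ bounding a disk $\Omega_{t}$; the argument above gives $f<t$ on $\operatorname{int}\Omega_{t}$. Since $t\mapsto\Phi(z,t)$ is a path along which $f$ strictly increases and which starts in the exterior collar of $C$, the circles $C_{t}$ lie outside $\overline{\Omega}$ for $t>t_{+}$, and continuity of $\Phi$ together with the sign of $f$ across $C$ forces the disks to be nested and increasing: $\Omega\subset\Omega_{t}$ for $t>t_{+}$. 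Fix $o\in\operatorname{int}\Omega$. I claim $U:=\bigcup_{t\in I_{+}}\operatorname{int}\Omega_{t}=\bR^{2}$: if some $p_{*}$ lay outside every $\Omega_{t}$, then joining $o$ to $p_{*}$ by a path $\gamma$, for each $t>t_{+}$ the path would run from inside $\Omega_{t}$ to outside it, hence cross $C_{t}$, so $f$ would take every value of $(t_{+},\ity)$ on the compact set $\gamma$ — impossible. Thus the compact disks $\Omega_{t}$ exhaust $\bR^{2}$.

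From the exhaustion I read off all the conclusions. Any $p$ with $f(p)\in I_{+}$ first enters the nested family at the level $t(p)=f(p)$, so it lies on the boundary circle $C_{t(p)}$; since distinct components of a fibre are disjoint, $f^{-1}(t)$ can have no component other than $C_{t}$ for $t\in I_{+}$. In particular $F_{+}=C$ and no fibre over $I_{+}$ has a non-compact end. Then the exterior of $\Omega_{t}$ is connected, meets $f^{-1}(t)$ nowhere, and has $f>t$ near $C_{t}$, so $f>t$ throughout it; hence $\{f\le t\}=\overline{\Omega_{t}}$ is compact for every $t\in I_{+}$. Therefore $f$ is bounded below, its image is $[m_{0},\ity)$ with $m_{0}=\min f$ a critical value, so $m_{0}\in\Atyp f$ and $I_{-}\subset(-\ity,m_{0})$; thus $F_{-}=\emptyset$. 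Finally any non-empty fibre $f^{-1}(\lambda)$ has $\lambda\ge m_{0}$, and choosing $t\in I_{+}$ with $t>\lambda$ gives $f^{-1}(\lambda)\subset\overline{\Omega_{t}}$, so it is compact. The symmetric statement follows by applying this to $-f$.

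The main obstacle is the step asserting that the disks $\Omega_{t}$ are nested and exhaust $\bR^{2}$ — equivalently, ruling out that a non-compact end of some fibre could coexist with the compact component $C$ and run off to infinity beside it. This is precisely the delicate real phenomenon, and what makes it work is the combination of the sign of $f$ across the nested circles with the impossibility of $f$ being unbounded on a compact arc; the point needing the most care is justifying the nesting from the continuity of the trivialisation rather than assuming it.
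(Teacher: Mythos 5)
Your proof is correct and follows essentially the same strategy as the paper's: use the trivial fibration over $I_{+}$ to transport the oval $C$ to a family of ovals $C_{t}$ sweeping out the whole exterior of $C$, whence $F_{+}=C$, the sublevel sets $\{f\le t\}$ are compact, $f$ attains a minimum (an atypical value), and $F_{-}=\emptyset$. The difference is only in execution: where the paper justifies the filling of the exterior by appealing to the family-of-ovals study \cite{JoT} and the claim that $f\to\infty$ along each direction outside $C$, you prove the exhaustion directly (interior maximum principle, nesting of the Jordan disks $\Omega_{t}$ via crossing arguments, and the bounded-image-on-a-compact-path argument), which makes the argument self-contained; the ``first entry'' step should be justified by the same crossing technique you use for the nesting, but that is routine.
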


\begin{remark}\label{r:durfeeerror1}
In  \cite[Prop 4.3, point 2 of the conclusion]{Dur2} it is stated that if  all fibres of $f$ are compact or empty, then $f_{d}$ has no linear factors.
  A simple polynomial $f=x^4+y^2$ shows that this conclusion is not true: we have $d=4$, all  fibres are compact or empty,  and $f_4=x^4$ has four linear factors $x$.
\end{remark}

\begin{proof}[Proof of Lemma \ref{l:circle}]
  Let $F_{+} = f^{-1}(a)$ for some $a\in I_{+}$.
 Consider the surface $S_{+} :=f^{-1}\bigl( [a, +\infty)\bigr) \subset \bR^{2}$. 
  
 Let us denote by $C_{+}$ the connected component of $S_{+}$ which contains $C$. The family of ovals $C_{+}$ has been considered and studied in \cite{JoT}, to which we refer the reader for more details.
We claim that $C_{+}$ equals the exterior of the oval $C$. Indeed, along each direction outside the oval $C$, the value of the function $f$ tends to infinity. Therefore the fibres of $f$ over  $[a, +\infty)$ must fill in the region of $\bR^{2}$ outside $C$. Since this region coincides with $S_{+}$ and it is connected,  and since it is the total space of the trivial fibration $f_{|} : S_{+}\to [a, +\infty)$, we deduce the equality  $S_{+}=C_{+}$. The hypothesis that $f$ is a trivial fibration over the interval $I_{+}$ then implies that the fibre $F_{+}$ coincides with the oval $C$. In particular the fibres of $f$ over $I_{+}$ are compact.
 
Moreover, this also shows that $f$ takes values less than $a$ inside the oval $C$, and  since $f$ is bounded inside the compact oval $C$, it follows that the fibres $f^{-1}(t)$ are compact for $t\le a$, and that they are empty for all $t <b$ for some value $b\in \bR$.  
\end{proof}

\begin{definition}\label{d:ell}
 Let $\cL_{f } :=\{\overline{F_{t}} \cap L^{\ity} \mid t\in \bR \}$, which is a finite set of points included in the part at infinity
 $\{f_{d}=0 \}$, and let $|\cL_{f}| := \# \cL_{f }$.
\end{definition} 

By definition $\cL_{f }$ collects all the ``points at infinity'' of the fibres of $f$. Unlike the complex setting where the image of $f_{\bC}$ is  $\bC$ and all fibres have the same points at infinity, in our real setting some fibres may be empty (Example \ref{s:exa4}), some fibres may be compact and some others not (Example  \ref{s:exam2}), or those fibres which contain non-compact components may not have all the same points at infinity (Example \ref{ex:Hlines}).

\begin{corollary}\cite[Prop 4.4(1) and Prop 4.3]{Dur2} \ 
 If $p\in \cL_{f }$ then $p\in \overline{F_{+}}\cap L^{\ity}$ or $p\in \overline{F_{-}}\cap L^{\ity}$.
\end{corollary}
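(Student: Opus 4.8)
The plan is to fix a point $p\in\cL_{f}$, so that by definition $p\in\overline{F_{t_{0}}}\cap L^{\ity}$ for some $t_{0}\in\bR$, and to reduce the statement to showing that the set $V_{p}:=\{t\in\bR : p\in\overline{F_{t}}\cap L^{\ity}\}$ is unbounded above or below. Indeed, if $V_{p}$ is unbounded, then it meets one of the two unbounded intervals $I_{+}$ or $I_{-}$, say $V_{p}\cap I_{+}\neq\emptyset$. Since $I_{+}$ avoids $\Atyp f$, the values of $I_{+}$ are typical at infinity, so $\overline{F_{t}}\cap L^{\ity}$ is locally constant, hence constant, on $I_{+}$ and equals $\overline{F_{+}}\cap L^{\ity}$; therefore $p\in\overline{F_{+}}\cap L^{\ity}$, as desired. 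This isolates the real content: the unboundedness of $V_{p}$.

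To prove it I would first set up a horn neighbourhood of infinity at $p$. Choose a short open arc $A\subset L^{\ity}$ around $p$ so small that $A\cap\{f_{d}=0\}=\{p\}$, and let $\mathcal H$ be the corresponding neighbourhood of infinity in $\bR^{2}$, consisting of the points running to infinity in the directions of $A$. For any direction $q\in\overline A\setminus\{p\}$ one has $f_{d}(q)\neq 0$, so along the rays towards $q$ we have $f=f_{d}+\lot\to\pm\infty$ with the sign of $f_{d}(q)$, this sign being constant on each of the two components of $A\setminus\{p\}$ and governed by the parity of the order $d_{p}$ of $f_{d}$ at $p$ together with the sign of its leading coefficient. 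Consequently, slicing $\mathcal H$ by the circle of radius $R$ yields, for $R$ large, an arc on whose two ends $f$ tends to $+\infty$, or to $-\infty$, or to both.

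The core is then a sweeping argument via the intermediate value theorem. By hypothesis there is a sequence $y_{n}\in F_{t_{0}}$ with $|y_{n}|=R_{n}\to\infty$ whose directions tend to that of $p$, so that $f(y_{n})=t_{0}$ while $f$ is large (with the fixed sign above) near the ends of the radius-$R_{n}$ slice of $\mathcal H$. For any target value $t$ lying between $t_{0}$ and that infinite sign, the intermediate value theorem on each slice produces a point $x_{n}$ at distance $R_{n}$ with $f(x_{n})=t$. I would then argue that $x_{n}\to p$: passing to a subsequence, $x_{n}$ converges to some $q^{*}\in\overline A\subset L^{\ity}$, and if $q^{*}\neq p$ then $f_{d}(q^{*})\neq 0$ forces $|f(x_{n})|\to\infty$, contradicting $f(x_{n})=t$; hence $q^{*}=p$. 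This shows $t\in V_{p}$ for every such $t$, so $V_{p}$ contains a half-line and is unbounded; in the case $d_{p}$ odd one even gets $V_{p}=\bR$.

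The main obstacle is precisely this last accumulation claim, since the intermediate value theorem only locates points with $f=t$ somewhere on the slice and one must prevent them from drifting towards a direction $q\neq p$ of the arc. This is exactly where the inclusion $\overline{F_{t}}\cap L^{\ity}\subset\{f_{d}=0\}$ enters, through the blow-up of $f$ near every direction of $\overline A\setminus\{p\}$, and where shrinking $A$ to meet $\{f_{d}=0\}$ only at $p$ becomes essential. A secondary point to treat with care is the reduction step: one genuinely needs $\overline{F_{t}}\cap L^{\ity}$ to be constant along $I_{+}$, which follows from typicality \emph{at infinity} over $I_{+}$ and not merely from triviality of the affine fibration, and I would invoke it explicitly.
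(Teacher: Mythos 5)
Your core construction is correct and is a genuinely different route from the paper's. The horn neighbourhood at $p$, the intermediate value theorem on circle slices between a point of $F_{t_0}$ and the ends of the slice where $|f|$ is large, and the accumulation argument (points with $f=t$ cannot drift towards a direction $q^*$ with $f_d(q^*)\neq 0$, since there $|f|\to\infty$) together do prove that $p\in\overline{F_t}\cap L^{\ity}$ for all $t$ in a half-line $(t_0,+\ity)$ or $(-\ity,t_0)$, the side being dictated by the sign of $f_d$ on the arc near $p$. The paper argues quite differently and much more briefly: it removes the line $\{p\}\times\bR$ from the compactified space $X$ and applies the compactness argument of Lemma \ref{l:circle} to $D=X\m(\{p\}\times\bR)$, so that $p\notin\overline{F_{+}}\cap L^{\ity}$ forces all fibres $X_t\cap D$ to be compact in $D$, i.e.\ $p\notin\overline{F_t}$ for every $t$, contradicting $p\in\cL_f$; note that this works for \emph{every} choice of the base points defining $F_{\pm}$.

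That last point is exactly where your reduction step has a genuine gap. You need the half-line you produced to reach the chosen fibre $F_{+}$, and when $d_p$ is even this can fail (say $f_d>0$ near $p$, so you get $(t_0,+\ity)\subset V_p$, while $F_{+}$ is taken over a point $s_{+}\in I_{+}$ with $s_{+}<t_0$). You patch this by claiming that typicality at infinity over $I_{+}$ makes $t\mapsto\overline{F_t}\cap L^{\ity}$ locally constant on $I_{+}$, but this implication is not a consequence of Definition \ref{d:atypical}: a trivialization of $f_{|}\colon f^{-1}(D)\cap(\bR^{2}\m B)\to D$ is a homeomorphism of an \emph{affine} neighbourhood of infinity, it need not extend to the compactification, and so it can a priori move the points at infinity of the fibres --- this is the very same objection you correctly raise against using affine triviality, and it applies verbatim here. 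To close the gap one must show that a transition value $\lambda$ of $t\mapsto\overline{F_t}\cap L^{\ity}$ produces a local splitting or vanishing at some $(p',\lambda)$ in the sense of Definition \ref{d:vanloop-split} (a case analysis on whether the nearby fibre pieces in a small disk at $p'$ meet its boundary or not), hence an atypical point at infinity, hence $\lambda\in\Atyp f$ by Theorem \ref{t:djt} together with \cite{TZ}; that is a step of real substance, not a one-line invocation. As written, your argument proves the corollary only for a suitable choice of the points of $I_{\pm}$ over which $F_{\pm}$ are taken; the independence of that choice is precisely what is missing, and what the paper's proof gives for free.
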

\begin{proof}
 We consider  $D := X\m (\{p\}\times \bR)$, which is a connected set, and we apply the proof of Lemma \ref{l:circle}  to $D$ instead of $\bR^{2}$, and to the fibres $X_{t}$ restricted to $D$. 
 Namely, by contradiction,  if $p\not\in \overline{F_{+}}\cap L^{\ity}$ then $\overline{F_{+}}$ is compact in $D$ and so, by  Lemma \ref{l:circle},  all the fibres $X_{t}\cap D$ are compact in $D$ or empty. This shows that $p\not\in X_{t}$ for any $t\in \bR$, which contradicts our hypothesis.
\end{proof}

We give an account of  Durfee's proof  of the following result, for the reader's convenience. 
\begin{proposition}\label{p:comp-at-inf}\cite[Cor 4.2 and Prop. 4.4(2)]{Dur2}
 The number of  connected components of $F_{+}\cup F_{-}$ is at least $2|\cL_{f}|$.
\end{proposition}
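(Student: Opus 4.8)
The plan is to count the \emph{ends} of $F_{+}$ and $F_{-}$ at infinity and to show that each point of $\cL_{f}$ absorbs at least four of them. Fix $F_{+}=f^{-1}(t_{+})$ and $F_{-}=f^{-1}(t_{-})$ with $t_{\pm}\in I_{\pm}\subset\bR\m\Atyp f$; in particular $t_{\pm}$ are regular values of $f$, so $F_{+}$ and $F_{-}$ are smooth closed $1$-submanifolds of $\bR^{2}$ and each connected component is either a circle or a properly embedded line. The circles are compact and only help a lower bound, so I discard them. A non-compact component is a properly embedded line and hence has exactly two ends; since $\overline{F_{\pm}}$ is an algebraic curve, compact in $\bP^{2}$, each end is a real half-branch accumulating onto a single point of $\overline{F_{\pm}}\cap L^{\ity}\subseteq\cL_{f}$, and by the Corollary above every $p\in\cL_{f}$ is reached. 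Writing $N$ for the number of non-compact components of $F_{+}\cup F_{-}$ and $e_{p}$ for the number of ends accumulating onto $p$, we get
\[
 2N \;=\; \sum_{p\in\cL_{f}} e_{p}.
\]
Thus the proposition reduces to the purely local estimate $e_{p}\ge 4$ for every $p\in\cL_{f}$: granting it, $\#\{\text{components}\}\ge N=\tfrac12\sum_{p}e_{p}\ge 2|\cL_{f}|$.

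To prove $e_{p}\ge4$ I would pass to a local chart of $\bP^{2}$ at $p$ in which $L^{\ity}=\{z=0\}$ and the half-planes $z>0$, $z<0$ of $\bR^{2}$ correspond to the two opposite rays of the line $\ell_{p}\subset\bR^{2}$ determined by $p$. This doubling -- one projective point $p$, two opposite affine directions $\theta_{p}$ and $\theta_{p}+\pi$ -- is precisely the source of the factor $2$. With $g(y,z):=\tilde f(1,y,z)$ the fibre is $\overline{F_{t}}=\{g(y,z)=tz^{d}\}$ near $p$, where $g(y,0)=f_{d}(1,y)=c\,y^{d_{p}}+\hot$ with $c\neq0$; equivalently, intersecting with a large circle $S_{R}$, the ends at $p$ are the solutions of $f_{|S_{R}}=t_{\pm}$ near $\theta_{p}$ and $\theta_{p}+\pi$, i.e. the real half-branches of $\overline{F_{t_{\pm}}}$ issuing from $p$. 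The key is a dichotomy: a half-branch transverse to $L^{\ity}$ (the typical situation when $d_{p}$ is odd) survives for both signs of $t$ and yields one end at $\theta_{p}$ and one at $\theta_{p}+\pi$ for each of $F_{+},F_{-}$; a half-branch tangent to $L^{\ity}$ (the case $d_{p}$ even) forces a conjugate $\pm$-pair together with a sign condition which, by the antipodal relation $f_{d}(-v)=(-1)^{d}f_{d}(v)$, is satisfied at exactly one of $\theta_{p},\theta_{p}+\pi$ and there with multiplicity two for \emph{both} $F_{+}$ and $F_{-}$. In either case the four data (two opposite directions $\times$ two signs of $t$) contribute at least four half-branches in total, so $e_{p}\ge4$.

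The main obstacle is exactly this local count. It is delicate because when $d_{p}\ge2$ the lower-degree parts $f_{d-1},f_{d-2},\dots$ enter the dominant balance near $\theta_{p}$ and decide which of the two opposite directions actually carries branches -- this is the very phenomenon that makes a point at infinity atypical, so the naive count using $f_{d}$ alone is insufficient. Carrying the argument out rigorously amounts to a real Newton--Puiseux analysis of $\{g=tz^{d}\}$ at $p$, equivalently to the Milnor-arc bookkeeping developed in Section \ref{s:arcs}; the point to be checked is that no degeneration of the lower-order terms can drop the total below four, using that $p\in\cL_{f}$ guarantees at least one genuine affine branch. Finally I would remark that $e_{p}=4$ is attained (e.g. for $f=xy$ each of the two points at infinity has exactly four ends), so the bound $2|\cL_{f}|$ is sharp.
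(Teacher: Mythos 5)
Your reduction is sound, and in fact it is the same bookkeeping that underlies the paper's proof: writing $N$ for the number of non-compact components and $e_{p}$ for the number of ends accumulating at $p$, the identity $2N=\sum_{p\in\cL_{f}}e_{p}$ shows that the local estimate $e_{p}\ge 4$ for every $p\in\cL_{f}$ implies the Proposition. The problem is that $e_{p}\ge 4$ \emph{is} the entire content of the Proposition, and you do not prove it; your final paragraph concedes exactly this ("the point to be checked is that no degeneration of the lower-order terms can drop the total below four"). Moreover, the dichotomy you sketch in its support is false as stated. Take the paper's own Example~\ref{s:exa4}, $f=x^{2}+(xy-1)^{2}$, at $q=[0:1:0]$: since $x^{2}\le f$ on every fibre, all ends of all fibres go to $q$; the fibres over negative values are empty, so $F_{-}=\emptyset$ and all four ends at $q$ come from $F_{+}$ alone; and in the chart $y=1$ the local branches of $\overline{F_{t}}$ at $q$ are $x=(1\pm\sqrt{t})z^{2}+\cdots$, i.e. transverse to $L^{\ity}=\{z=0\}$, yet they exist only for $t>0$, while $d_{q}=2$ is even. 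This kills both horns of your dichotomy: a branch transverse to $L^{\ity}$ need not "survive for both signs of $t$", and the even/tangent case need not contribute ends "for both $F_{+}$ and $F_{-}$". The truth is only that the \emph{total} over (two directions)$\times$(two signs of $t$) is at least four; how these four ends are distributed is governed by the lower-order terms of $f$ --- precisely the obstacle you identify and then defer.

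The paper (following Durfee) closes this gap by a resolution at infinity, which is the rigorous substitute for the Newton--Puiseux analysis you postpone. After finitely many blow-ups one gets a proper extension $\hat f\colon M\to\bR$ of $f$, and over each $p\in\cL_{f}$ there is at least one \emph{horizontal} divisor $E$, i.e. one on which $\hat f_{|E}$ is a non-constant polynomial of one variable. The parity dichotomy is then applied to $\hat f_{|E}$, not to $d_{p}$: if $\deg \hat f_{|E}$ is odd, the equation $\hat f_{|E}=a$ has one solution near each of $\pm\ity$; if even, it has two solutions near one of $\pm\ity$ and none near the other. Either way $E$ carries exactly two solutions over $F_{+}\cup F_{-}$, and each such solution is a point where the curve $(\hat f)^{-1}(a)$ crosses $E$, producing two half-branches (one on each side of $E$), i.e. two ends of the affine fibre. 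Hence each horizontal divisor contributes four ends, giving $e_{p}\ge 4$ and, with your arithmetic, the Proposition. Without this step (or an equivalent argument controlling the lower-order terms), your proposal establishes only $e_{p}\ge 1$, hence only the bound $\tfrac12|\cL_{f}|$, and is therefore incomplete.
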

 	
\begin{proof}
For the reader's convenience, we recall here  Durfee's proof.  One considers a sequence of blow-ups at points at infinity in $\bP^{2}$ which yields a \emph{resolution at infinity of $f$}. This produces a space $M$ and a proper map $\hat f : M\to \bR$ which extends $f$. One may regard the space $M$ as the disjoint union of $\bR^{2}$ and a finite number of divisors at infinity. Some of these divisors (denoted by $E$) are ``horizontal'', i.e. the restriction $\hat f_{|E} : E\to \bR$ is a non-constant polynomial of one variable, and the others are ``vertical'', i.e. the restriction $\hat f_{|E} : E\to \bR$ is constant. 

For each point of $\cL_{f}$ there is at least one horizontal divisor. Considering a horizontal divisor $E$, 
    if $\hat f_{|E}$ is a polynomial of odd degree, then it is injective outside a compact interval $[-A, A]$, and therefore $(\hat f)^{-1}(a)$ has one solution for every $a$ such that $|a| > A >0$.
If the degree is even, then we have two solutions towards  $-\ity$ and no solution towards $+\ity$, or the other way around.
To each such solution corresponds a local branch of $(\hat f)^{-1}(a)$, and for each half-branch we count one connected component of  the fibre $f^{-1}(a)$.  
In this way each connected component of $f^{-1}(a)$ is counted twice, whether or not its two intersection points 
with the horizontal divisors  coincide.

We therefore obtain that $\# F_{+}\cup F_{-}$ equals the double of the number of the horizontal divisors. In particular we get the claimed inequality $\# F_{+}\cup F_{-} \ge 2|\cL_{f}|$. 
 \end{proof}
 
  The following example by Durfee shows that the inequality of Proposition  \ref{p:comp-at-inf} may be very far from an equality.
\begin{example} \cite[p. 1347]{Dur2} \label{ex:Hlines}
 Let $f=x(y+1) \cdots (y+k)$, $k\ge 2$.  The fibre $f^{-1}(0)$ produces a partition of the plane into $2(k+1)$ horizontal strips between parallel lines delimited by the vertical axis. Each horizontal strip contains a connected component of $F_{+}\cup F_{-}$, whereas $|\cL_{f}|=2$.
\end{example}

 
 \section{Milnor arcs and clusters at infinity}\label{s:arcs}

 \subsection{Milnor arcs at infinity}\label{ss:arcs}  

The idea of the Milnor set was introduced by Milnor \cite{Mi1} and has been used in many papers ever since, either locally for germs of functions \cite{Th, AFS,Sa, Be,ACT1,AT}, or globally for polynomial functions \cite{NZ, Ti-reg,Ti-book, DRT,ACT2,   DJT, Mo}.

Let $\rho_a : \bR^2 \rightarrow \bR _{\geq0}  $, $\rho_a(x,y)=(x-a_1)^2+(y-a_2)^2 $ be the square of the  Euclidean distance to $a:= (a_1 , a_2) \in \bR^2$. 
\begin{definition}\label{d:milnorser}
The Milnor set of  $f : \bR ^2 \rightarrow \bR$ relative to $\rho_{a}$ is the set of $\rho_a$-\textit{nonregular} points of $f$, namely
$$M_a(f) := \{ (x,y) \in \bR^2 \mid  \rho_a \not\pitchfork_{(x,y)} f  \}.$$
\end{definition}
Equivalently, the \textit{Milnor set} $M_a(f)$ is the zero set $\{\Jac F_{a} =0\}$ considered with reduced structure, where $\Jac F_{a}$ denotes the determinant of the Jacobian  matrix of the map $F_a:= ( f, \rho_a ): \bR^2 \rightarrow \bR^2$.

\begin{propdef}[Milnor arcs at infinity] \cite{Mo}\label{p:milnorarcs}\\
Let $f : \bR^2 \to\bR$ be a non-constant polynomial. There exists a dense subset $\Omega(f)\subset \bR^{2}$ of points such that  $M_{a}(f)$ is of dimension 1 for any $a\in \Omega(f)$. For each such point $a \in \Omega(f)$,  there exists a radius $R_{a}\gg 1$ such that for any $R\ge R_{a}$, and denoting by $D_{R}(a)\subset \bR^{2}$ the closed disk centred at $a$ of radius $R$, one has:

\begin{enumerate}
\rm \item \it  The set $M_{a}(f) \m \left[ D_{R}(a) \cup \Sing f \right]$ is a disjoint union of finitely many 1-dimensional connected manifolds, that we denote by $\gamma_{1}, \ldots, \gamma_{s}$.

\rm \item \it One may endow each $\gamma_{i}$ with a parametrisation $\gamma_{i} : ]R, +\infty[ \to \bR^2$ such that
the restriction $(\rho_{a})_{|\gamma_{i}}$ is strictly monotonous and tends to infinity as the parameter $t$ tends to infinity; we call \emph{Milnor arc at infinity} the parametrised curve $\gamma_{i}$.
\rm \item \it for every Milnor arc at infinity, the restriction $f_{|\gamma_{i}(t)}$ is either:

$\bullet$ strictly increasing as $t\to +\ity$, and if $\lim_{t \to \infty} f(\gamma(t)) = \lambda\in \bR\cup \{+\ity\}$, then we say that $\gamma$ is an \emph{increasing Milnor arc at infinity of $f$ associated to $\lambda$}, and abbreviate this by $f_{|\gamma }{\nearrow} \lambda$,

\noindent or 

$\bullet$ strictly decreasing as $t\to +\ity$, and if $\lim_{t \to \infty} f(\gamma(t)) = \lambda\in \bR\cup \{-\ity\}$, then we say that $\gamma$ is a \emph{decreasing Milnor arc at infinity of $f$ associated to $\lambda$}, and abbreviate this by $f_{|\gamma }{\searrow} \lambda$.     
\end{enumerate}
 \fin
\end{propdef}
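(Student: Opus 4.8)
The plan is to realise $M_{a}(f)$ as a real algebraic curve, to control its branches at infinity via Puiseux data, and then to read off the monotonicity of $\rho_{a}$ and of $f$ along these branches directly from the defining equation of the Milnor set. Writing $F_{a}=(f,\rho_{a})$, a direct computation gives
\[
\Jac F_{a}=2\bigl(f_{x}(y-a_{2})-f_{y}(x-a_{1})\bigr),
\]
so that $M_{a}(f)$ is the real zero set of $m_{a}:=f_{x}(y-a_{2})-f_{y}(x-a_{1})$. To produce the dense set $\Omega(f)$, I would first observe that $a$ enters $m_{a}$ only through the lower-order term $a_{1}f_{y}-a_{2}f_{x}$, whereas the degree-$d$ part of $m_{a}$ equals $(f_{d})_{x}y-(f_{d})_{y}x$ and is independent of $a$; in polar coordinates, with $f_{d}=r^{d}g(\theta)$, this leading part is $-r^{d}g'(\theta)$. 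Hence the set of $a$ with $m_{a}\equiv 0$ is contained in a proper affine subset, and for $a$ off it $M_{a}(f)$ has dimension at most $1$; since $g'$ has real zeros (at the extrema of $g$) the curve reaches infinity, so its dimension is exactly $1$. Smoothness of $M_{a}(f)$ off $\Sing f$ outside a compact set, for $a$ in a dense set, I would obtain by a Sard-type argument applied to the algebraic family $(x,y,a)\mapsto m_{a}(x,y)$; this defines $\Omega(f)$.

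Fixing $a\in\Omega(f)$, I would pass to the projective closure $\overline{M_{a}(f)}\subset\bP^{2}$, which meets the line at infinity in finitely many points; near each of these the real branches are given by finitely many Puiseux half-branches running to infinity. Choosing $R_{a}$ so that $D_{R_{a}}(a)$ contains all bounded components of $M_{a}(f)$ together with all its singular points lying off $\Sing f$, the set $M_{a}(f)\m[D_{R}(a)\cup\Sing f]$ is, for every $R\ge R_{a}$, precisely the disjoint union of these half-branches $\gamma_{1},\dots,\gamma_{s}$, each a smooth connected $1$-manifold. This proves (a). Each $\gamma_{i}$ is an unbounded semialgebraic (Nash) arc and, $a$ being fixed, $\rho_{a}\to\infty$ along it; a Nash function on such an arc has finitely many critical points, so after possibly enlarging $R_{a}$ the restriction $(\rho_{a})_{|\gamma_{i}}$ is strictly monotonous. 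Reparametrising $\gamma_{i}$ by $t\in\,]R,+\infty[$ so that $\rho_{a}(\gamma_{i}(t))$ increases to infinity gives (b).

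For (c) I would use the pointwise Milnor condition: on $\gamma_{i}$ the gradients are parallel, $\grad f=c\,\grad\rho_{a}$. Since $\gamma_{i}$ avoids $\Sing f$ we have $\grad f\neq 0$, and away from $a$ we have $\grad\rho_{a}\neq 0$, so the scalar $c$ is well defined, continuous and nowhere zero on the connected arc $\gamma_{i}$, hence of constant sign. Differentiating along $\gamma_{i}$ yields
\[
\frac{d}{dt}f(\gamma_{i}(t))=c\,\frac{d}{dt}\rho_{a}(\gamma_{i}(t)),
\]
and since $\frac{d}{dt}\rho_{a}>0$ by (b), the sign of $\frac{d}{dt}f$ is constant and equal to that of $c$. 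Thus $f_{|\gamma_{i}}$ is strictly increasing or strictly decreasing; being monotone it admits a limit $\lambda$, necessarily in $\bR\cup\{+\ity\}$ in the increasing case and in $\bR\cup\{-\ity\}$ in the decreasing case, which is the associated value.

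The genuine obstacle is the construction of $\Omega(f)$, namely establishing that a dense set of centres $a$ makes $M_{a}(f)$ simultaneously one-dimensional and smooth off $\Sing f$ outside a compact disk; once this is secured, the monotonicity claims (b) and (c) follow essentially formally, the first from the semialgebraic nature of the arcs and the second from the parallel-gradient equation. One must also check that no half-branch is swallowed by $\Sing f$ (it is excised) and that $f_{|\gamma_{i}}$ cannot be eventually constant, which is precisely what $c\neq 0$ forbids.
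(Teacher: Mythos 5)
The paper itself offers no proof of this Proposition/Definition: it is quoted from \cite{Mo} and closed with a tombstone, so your attempt has to be judged on its own terms. Its architecture is the natural one and most steps are sound: the computation $\Jac F_a = 2\bigl(f_x(y-a_2)-f_y(x-a_1)\bigr)$ is right; the parametric transversality argument in $a$ is valid, since off $\Sing f$ one has $\bigl(\partial m_a/\partial a_1,\, \partial m_a/\partial a_2\bigr) = (f_y,\, -f_x) \neq 0$, so the family is a submersion where it matters and Sard applies; the Puiseux/semialgebraic finiteness gives (a) and (b); and the parallel-gradient identity $\grad f = c\,\grad \rho_a$, with $c$ continuous and nonvanishing, hence of constant sign on each connected arc, gives (c) --- modulo the small point that strict monotonicity of $\rho_a$ from (b) only yields $\frac{d}{dt}\rho_a \geq 0$ with isolated zeros, which you should fix by reparametrising by $\rho_a$ itself.

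The genuine gap is the step where you claim $M_a(f)$ has dimension exactly $1$, i.e.\ that the curve ``reaches infinity''. You deduce this from the fact that the degree-$d$ form of $m_a$, namely $-r^d g'(\theta)$, has real zeros. That inference is invalid: a real polynomial whose top form has real zeros may perfectly well have an empty or compact zero set, e.g.\ $x^2+y^4+1$, whose top form $y^4$ vanishes on the whole line $\{y=0\}$. What forces real points near infinity is a \emph{sign change} of the top form (a real root of odd multiplicity); at an extremum of a non-constant $g$ the derivative $g'$ does change sign, but you never invoke this, and in any case the argument collapses completely when $f_d$ is radially symmetric, $f_d = c\,(x^2+y^2)^{d/2}$ (take $f = (x^2+y^2)^2 + x$): then $g' \equiv 0$, the degree-$d$ part of $m_a$ vanishes identically, and your leading-term analysis says nothing at all. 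The standard and needed argument is elementary: for every $r>0$ the restriction of $f$ to the circle $\partial D_r(a)$ attains an extremum, and at such a point $\grad f$ and $\grad \rho_a$ are linearly dependent, so $M_a(f)$ meets every circle centred at $a$; hence $M_a(f)$ is unbounded, in particular infinite, so of dimension exactly $1$, with at least one branch at infinity. This step is not cosmetic: it is what guarantees that Milnor arcs exist at all, i.e.\ $s \geq 1$ in (a). Separately, your parenthetical ``(it is excised)'' does not dispose of the worry about arcs terminating at finite points of an unbounded $\Sing f$; the correct remark is that the components of $M_a(f)$ not contained in $\Sing f$ meet $\Sing f$ in finitely many points (B\'ezout), and these are absorbed into $D_{R_a}(a)$ after enlarging $R_a$.
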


\begin{definition}
Any Milnor arc $\gamma$  has a unique point at infinity $p\in L^{\ity}\cap \overline{\gamma}$; we shall say that ``the Milnor arc $\gamma$ has the point $p$ at infinity''.
\end{definition}

We may and will  assume  from now on that, modulo a translation of coordinates, the origin 0 is a point of $\Omega(f)$, and for this point we will use the simplified notations without lower index, such as $M(f)$  etc. 

\begin{remark}\label{r:tz}
  Unlike the setting of complex polynomials of 2 variables where the existence of the Milnor set at a point at infinity is a precise indicator of  the existence of an atypical fibre (see e.g. \cite{ST}, \cite{Ti-book}), in the real setting this is no more true.  For instance in \cite[Example 3.2]{TZ} $0\not\in \Atyp f$ but there are Milnor arcs at the point $[1:0:0] \in L^\ity$ and $f$ tends to the value 0 along each of these arcs. 
\end{remark}

\subsection{Clusters of Milnor arcs}\label{ss:clusters}

By Proposition \ref{p:milnorarcs}, the Milnor arcs at infinity do not intersect mutually. It follows that if $C\subset \bR^{2}$ is some large enough circle centred at the origin, then $M(f) \cap C$ is a finite set of points $\{p_{1}, \ldots, p_{s}\}$.  We define the following counterclockwise relation between these points\footnote{Note that this is not an order relation.}: we say that ``\emph{$p_{j}$ is the successor of $p_{k}$}'', or that ``\emph{$p_{k}$ is the antecedent of $p_{j}$}'',   if  starting from the point $p_{k}$ and moving counterclockwise along the circle $C$ one arrives at the point $p_{j}$  without meeting any other point of the set $M(f) \cap C$.  

We also say that $\{p_{1}, \ldots, p_{k}\}$ is a sequence of consecutive  points of the set $M(f) \cap C$ if $p_{i+1}$ is the successor of $p_{i}$ for all $i=1, \ldots, k-1$.   This relation between the points $M(f) \cap C$ on the circle $C$ allows us to define a similar one among the Milnor arcs at infinity, as follows:  

\begin{definition}[Counterclockwise ordering of Milnor arcs at infinity]\label{d:orderarcs} \ \\
We say that
``\emph{$\gamma_{j}$ is the successor of $\gamma_{k}$}'', or that ``\emph{$\gamma_{k}$ is the antecedent of $\gamma_{j}$}'',  if  the point $p_{j} := \gamma_{j}\cap C$ is the successor of the point $p_{k} := \gamma_{k}\cap C$. This relation is independent on the size of the circle $C$, provided large enough.  We also say that $\{\gamma_{1}, \ldots, \gamma_{k}\}$ is a sequence of consecutive  Milnor arcs at infinity if $\{p_{1}, \ldots, p_{k}\}$, where $p_{i}:= \gamma_{i}\cap C$, is a sequence of consecutive  points of the set $M(f) \cap C$.
\end{definition}

\begin{definition}[Clusters of Milnor arcs at infinity]\label{d:cluster}
We call \textit{increasing cluster at $\lambda\in \bR  \cup \{+\infty\}$} a sequence of consecutive Milnor arcs at infinity $\gamma_k, \ldots ,  \gamma_{k+l}$, $l\ge 0$,  such that the condition $f_{ | \gamma_i} \nearrow \lambda$ holds precisely for all $i= k, \ldots , k+l$ and does not hold for the antecedent of $\gamma_k$ nor for the successor of $\gamma_{k+l}$.

Similarly, we define a \textit{decreasing cluster at} $\lambda  \in \bR  \cup \{-\infty\}$ by replacing $\searrow $ instead of $ \nearrow$ in the above definition. We will use the generic name ``Milnor cluster'', or simply ``cluster'', for any increasing or decreasing cluster.
\end{definition}
A similar definition of Milnor clusters was given in \cite{HNl} in  the  setting of surfaces in 
$\bR^{n}$ instead of $\bR^{2}$. Earlier, polar clusters  have been defined in \cite{CP}. In \cite{CP} and  \cite{HNl},  clusters are used for detecting atypical fibres. An effective detection of atypical values via Milnor clusters can be found in \cite{Mo}.  Let us point out that \cite{DJT} develops an algorithmic detection of atypical fibres without using Milnor clusters.

\begin{theorem}[\cite{HNl}, \cite{Mo}]\label{t:odd-cluster}
 Let $f:\bR^{2}\to \bR$ be  a non-constant polynomial function, and let $\lambda\in \bR$ such that the fibre $f^{-1}(\lambda)$
 has at most isolated singularities. Then $\lambda$  is an atypical value of $f$ at infinity\footnote{See Definition \ref{d:atypical}.} if and only if there exists a cluster at $\lambda$ (either increasing or decreasing) having an odd number of  arcs. 
 \fin
\end{theorem}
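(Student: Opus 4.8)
The plan is to characterize atypicality at infinity through the parity of Milnor clusters by tracking how the fibre $f^{-1}(\lambda+\e)$ deforms to $f^{-1}(\lambda)$ across the critical value $\lambda$, reading off the local behaviour from the Milnor arcs that feed into each cluster. First I would fix a large circle $C$ centred at the origin (of radius larger than all the $R_a$ from Proposition \ref{p:milnorarcs}) and note that, since $f^{-1}(\lambda)$ has at most isolated singularities, the only possible source of non-triviality at infinity is the behaviour of $f$ along the Milnor arcs; away from $M(f)$ the function $\rho$ restricts to a submersion on each fibre outside $C$, so $f$ is a trivial fibration there by a standard Ehresmann-type argument. Thus the change of topology at infinity is concentrated precisely at the Milnor arcs associated to $\lambda$, i.e. those $\gamma_i$ with $f_{|\gamma_i}\nearrow\lambda$ or $f_{|\gamma_i}\searrow\lambda$.

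Second, I would localize the analysis to a single point $p\in L^\ity$ and a single cluster. Consider an increasing cluster $\gamma_k,\dots,\gamma_{k+l}$ at $\lambda$. Along each $\gamma_i$ the value $f$ tends monotonically to $\lambda$, and in the annular region outside $C$ between two consecutive arcs the restriction $f_{|}$ has no $\rho$-critical points, so on each such annular sector $f$ is monotone in the angular/radial direction and the intersection of the fibre $f^{-1}(\lambda\pm\e)$ with that sector behaves predictably. The key combinatorial point is that the arcs of a cluster alternately bound regions where $f$ lies above or below $\lambda$ near infinity; an increasing arc and its neighbour organize the local fibre components so that each consecutive pair of arcs closes off (or opens up) one non-compact branch of the fibre at infinity. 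Counting half-branches as in the proof of Proposition \ref{p:comp-at-inf}, I would show that the number of non-compact branches of $f^{-1}(\lambda-\e)$ entering the region controlled by the cluster differs from that of $f^{-1}(\lambda+\e)$ exactly when the number of arcs in the cluster is odd: an even cluster produces a symmetric pairing of branches on the two sides (no net change), whereas an odd cluster leaves one branch unpaired, forcing a genuine change of topology as $\e$ changes sign. This parity bookkeeping is the heart of the equivalence.

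Third, for the converse direction I would argue that if every cluster at $\lambda$ has an even number of arcs, then on the region outside $C$ one can glue the local trivializations across the arcs consistently, because the even pairing provides a matching that can be realized by a controlled vector field tangent to the fibres and transverse to $f$; this yields local triviality of $f_{|}$ over a neighbourhood of $\lambda$ at infinity, hence $\lambda\notin\Atyp f$. Conversely, an odd cluster obstructs such a gluing, and the unpaired branch witnesses either a vanishing or a splitting component at infinity, so $\lambda$ is atypical. Since this is attributed to \cite{HNl} and \cite{Mo}, I would in practice organize the proof as a reduction of the two-variable statement to the cited results, invoking the monotonicity and finiteness from Proposition \ref{p:milnorarcs} and the component-counting mechanism already used in Proposition \ref{p:comp-at-inf}.

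The hard part will be making the parity argument rigorous at a point $p\in L^\ity$ where several clusters (some increasing, some decreasing, possibly associated to different limits $\lambda$) interleave along $C$, and where the fibre over $\lambda$ may have an isolated singularity at infinity that merges several arcs. The delicate issue is to show that the angular ordering of the arcs around $C$ faithfully records the incidence structure of the fibre branches at $p$, so that the abstract count of arcs in a cluster genuinely matches the number of branches undergoing the topological change. I would handle this by passing to the resolution $M\to\bP^2$ of $f$ at infinity (as in Proposition \ref{p:comp-at-inf}), where the horizontal divisors carry the one-variable restrictions $\hat f_{|E}$, and I would match each Milnor arc with a critical or asymptotic feature of some $\hat f_{|E}$; the parity of the cluster then corresponds to the parity of the degree of the relevant restriction, which controls whether solutions of $\hat f_{|E}=a$ appear on one side of $\lambda$ or on both. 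This translation from arcs to the resolution data is where the real technical care is required.
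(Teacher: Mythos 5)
The paper itself gives no proof of this theorem: it is quoted from \cite{HNl} and \cite{Mo} (note the \fin ending the statement), so your proposal can only be measured against the strategy of those references, which it broadly follows (clusters $\to$ vanishing/splitting $\to$ atypicality). Within that strategy, however, your central parity mechanism is mis-stated. It is not true that the arcs of a cluster ``alternately bound regions where $f$ lies above or below $\lambda$'': in an increasing cluster $f<\lambda$ along \emph{every} arc near infinity, by definition. The genuine alternation is the one recorded in Lemma \ref{l:consec-index}: the fibre component $\alpha_t(\cC)$ meets the consecutive arcs of its cluster alternately at local maxima and local minima of $\rho$ (index $+\frac12$ / $-\frac12$). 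The parity of the number of arcs then decides whether the two ends of $\alpha_t(\cC)$ are of the same nature or not: with an odd cluster both ends either return to $\partial D_{R}$ (and as $t\to\lambda$ the middle escapes, i.e.\ splitting) or both go to infinity (i.e.\ vanishing); with an even cluster one end is on $\partial D_{R}$ and one escapes, and nothing happens topologically. Your ``symmetric pairing of branches on the two sides'' never identifies this max/min bookkeeping, which is the actual heart of the equivalence, so as written the key step would not go through.

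There are two further gaps. First, both of your directions quietly use the equivalence between atypicality at infinity and the existence of splitting/vanishing fibre components; this is itself a substantial theorem in the real setting (\cite{TZ}, and Theorem \ref{t:djt} here, quoted from \cite{DJT}) and cannot be replaced by the Ehresmann-style ``gluing of trivializations'' you sketch. Indeed, Remark \ref{r:tz} shows the real phenomenon your gluing must handle: Milnor arcs along which $f\to\lambda$ can exist while $\lambda$ is perfectly typical (this is exactly the even-cluster case), so triviality must be proved cluster by cluster from the parity structure, which is precisely the content of the cited proof in \cite{Mo}. Second, your final paragraph proposing to translate clusters into Durfee's resolution at infinity and to match cluster parity with the parity of $\deg \hat f_{|E}$ on horizontal divisors is unsupported: Milnor arcs are not put in any stated correspondence with divisors, and the two frameworks (the resolution used in Proposition \ref{p:comp-at-inf} versus the Milnor set) are never reconciled, so this cannot serve as the rigorous backbone of the argument.
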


\begin{remark}\label{r:component}
 It was shown in \cite{Mo} that for any cluster $\cC$ at $\lambda\in \bR  \cup \{\pm\infty\}$ there is a unique connected component of the fibre $f^{-1}(t)\m D_{R}$, denoted by $\alpha_{t}(\cC)$,  which intersects all the Milnor arcs of the cluster $\cC$, for  $t$ close enough to $\lambda$. It was proved in  \cite{Mo} that the correspondence  $\cC\mapsto \alpha_{t}(\cC)$, for  a large enough disk $D_{R}$, is a well-defined map from the set of clusters to the set of fibre components in $\bR^{2}\setminus D_{R}$, which is moreover injective.

Let us also point out that two different components $\alpha_{t}(\cC)$ of $f^{-1}(t)\m D_{R}$ may belong to the same connected component of the
 affine fibre $f^{-1}(t)$, and  see Example \ref{s:exam2} for such a situation. 
\end{remark}

\smallskip

\noindent \textbf{Convention}. In the rest of this paper we shall designate the Milnor arcs at infinity simply as ``Milnor arcs''.

 \subsection{The Splitting (Sp) and the Vanishing (Va) at infinity}\label{ss:sp-van}\ \\
  The splitting (denoted Sp) and the vanishing (denoted Va) of fibre components are phenomena which may happen,  at some point $(p, \lambda)\in \cL_{f}\times \bR$ where $\lambda$ denotes a value of $f$. These have been defined\footnote{For related viewpoints and for extensions one may consult \cite{CP}, \cite{HNl}, \cite{DTT}, \cite{DJT}.} in \cite{TZ}. They are related to Milnor arcs at infinity. More precisely, we will see in the following that  any odd cluster is either a \emph{splitting cluster} or a \emph{vanishing cluster}. 
 
After \cite{DJT} (see Theorem \ref{t:djt} below), the existence of atypical fibres is equivalent to the existence of  \textit{atypical points at infinity} in  $\cL_f \times \bR$, which are defined with respect to the local splitting and local vanishing (which are the localisations of the Sp and Va phenomena).

 In order to display the definitions, let us recall a few preliminaries following \cite{DJT}.  
Let $ \{ M_t \}_{t\in \bR}$ be a family of sets in $ \bR^2$. We say  that the \textit{limit set} of the family $ \{ M_t \}_{t\in \bR}$ when $ t \to \lambda $, and we denote it by $ \lim _{t\to \lambda} M_t$,  is the set of points $ x \in \bR^2$ such that there exists a sequence $t_k \in \bR $ with  $ t_k \to  \lambda$ and a sequence of points $ x_{k} \in M_{t_k}$ such that $ x_{k} \to x$.  
  
\begin{definition}\label{d:van-spl}\cite{DJT} 
Let $ \lambda \in \bR$ such that $ \text{Sing} f^{-1}(\lambda)$ is a compact set. 

\sloppy
\noindent 
(i) One says that $ f$ \textit{has a vanishing at infinity at $ \lambda$}, 
 if $\lim_{t \to \lambda^{-}} \mathop{\text{max}_j} \mathop{ \text{inf}_{q \in F_{t,j}}} \| q\|= \infty$, or $\lim_{t \to \lambda^{+}} \mathop{\text{max}_j} \mathop{ \text{inf}_{q \in F_{t,j}}} \| q\|= \infty$, where $j$ runs over all connected components  $F_{t,j}$ of the fibre $f^{-1}(t)$.
 

\medskip
\noindent
(ii) One says that $f$ \textit{has a splitting at infinity at $\lambda$},   if there exists $ \eta >0$ and a continuous family of analytic paths $ \phi_{t} : \left[ 0,1  \right] \to f^{-1}(t)$ for $t \in \left(  \lambda - \eta , \lambda \right)$, or for $t \in \left( \lambda, \lambda + \eta \right)$, such that:

\smallskip

\noindent
(1) $\im\phi_t  \cap M(f) \neq \emptyset$, and $ \lim _{t \to \lambda}\|q_t\| = \infty$    for any $q_t \in \im\phi_t \cap M(f)$, 

and 

\noindent
(2) the limit set $\lim_{t \to \lambda^{-}} \im \phi_t$, or $\lim_{t \to \lambda^{+}} \im\phi_t$, respectively,  is not connected.
\end{definition}
\smallskip

\begin{definition}\label{d:oddclusters}
 We say that a cluster  $\cC$ is \emph{odd} (or \emph{even}) if it has an odd number of Milnor arcs (or an even number of Milnor arcs, respectively). 

By  Remark \ref{r:component},  there is an injective correspondence between the clusters $\cC$ at $\lambda \in \bR  \cup \{\pm\infty\}$ and the connected components of the fibre $f^{-1}(t) \m D_{R}$ for $t\to \lambda$ and large enough radius $R$.  We have denoted by 
$\alpha_t(\cC)$ the connected component defined by $\cC$ in this correspondence. It was proved in 
\cite[Proof of Theorem 6.5]{Mo} that $\cC$ is an odd cluster if and only if  $\alpha_t(\cC)$ is either vanishing or splitting at infinity, in the sense of Definition \ref{d:van-spl}(i-ii), where one replaces $f^{-1}(t)$ by $\alpha_t(\cC)$.

In the following we will therefore call such an odd cluster $\cC$ at $\lambda$  either a \emph{vanishing cluster}, or a  \emph{splitting cluster}, accordingly. 
\end{definition}

The paper \cite{DJT} shows that one can \emph{localise} at some points $(p,\lambda) \in L^{\ity} \times \bR$ the vanishing and the splitting at infinity of $f$ at $\lambda$ (cf Definition \ref{d:van-spl}). To explain this result that we will need here, let us recall some notations and definitions.

By a linear change of coordinates we may assume, without loss of generality, that $p \in L^\ity$ is the point $[0:1:0]$. Recall that $\tilde f (x,y,z)$  denotes the homogenization of degree  $d= \deg f$ with respect to the new variable $z$.    In some chart  $U \simeq \bR^2 \subset \bP^2$ at $p$, the family of polynomial functions $g_t:= \tilde f(x,1,z)-tz^d $ defines a family of algebraic curve germs $C_t:=\{g_t =0\}$ at $p$, of parameter $t$.  
\begin{definition}\label{d:vanloop-split} \cite{DJT} (Localisation).

 One says that $f$ has a \emph{splitting} at $(p,\lambda) \in L^\ity \times \bR$ if there is a small disk $D_\e$ at $p$ in some chart at infinity $\bR^2$ such that the representative of the curve $C_t$ in $D_\e$ has a connected component $C^i_t$ such that $C_t^i \cap \partial D_\e \neq \emptyset$ for all $t > \lambda$ (or for all $t<\lambda$) close enough to $\lambda$, and that the local Euclidean distance $\dist(C_t^i,p)\neq 0$ tends to $0$ when $t \to \lambda$.

One says that $f$ has a \emph{vanishing} at $(p,\lambda) \in L^\ity \times \bR$  if there is a small disk $D_\e$ at $p \in U$ such that $C_t \cap D_\e\m \{p\}$ has a non-empty connected component $C_t^i\m \{p\}$ with $C_t^i \cap \partial D_\e = \emptyset$ for all $t<\lambda$ (or for all $t>\lambda$) close enough to $\lambda$, such that $\lim_{t \to \lambda}C_t^i\cap D_\e=\{p\}$.  
\end{definition}

\begin{definition}\label{d:atypnt} \cite{DJT} (Atypical points at infinity).

We say that $(p,\lambda)\in L^\ity \times \bR$ is an \textit{atypical point at infinity} of $f$ if there is either splitting or vanishing at $(p,\lambda)$.
\end{definition}


%
\begin{theorem}\label{t:djt} 
A  value $\lambda\in \bR$ is an atypical value at infinity of $f$ (cf Definition \ref{d:atypical}) if and only if there exists $p\in \cL_{f}$ such that $(p,\lambda)$ is an atypical point at infinity. 

More precisely, if $\cC$ is a splitting cluster at $\lambda$ (cf Definition \ref{d:oddclusters}), then after splitting, the two local fibre components have the same point $p$ at infinity, and if  $\cC$ is a vanishing cluster at $\lambda$, then before vanishing, the fibre component has the unique point $p$ at infinity.  
\end{theorem}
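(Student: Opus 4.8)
The plan is to route the equivalence through the cluster characterisation that is already available and then to \emph{localise} the global vanishing/splitting at a single point of $L^\ity$. Throughout I would restrict to values $\lambda$ for which $\Sing f^{-1}(\lambda)$ is compact, which is the standing hypothesis under which Theorem~\ref{t:odd-cluster} and Definition~\ref{d:van-spl} operate. Combining Theorem~\ref{t:odd-cluster} with Definition~\ref{d:oddclusters} (and the result of \cite{Mo} recalled there) gives the chain of equivalences: $\lambda$ is atypical at infinity $\iff$ there is an odd cluster $\cC$ at $\lambda$ $\iff$ there is a cluster $\cC$ at $\lambda$ whose associated component $\alpha_t(\cC)$ either vanishes or splits at infinity in the global sense of Definition~\ref{d:van-spl}. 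Hence the only thing left to prove is that such a global vanishing/splitting of a fibre component is equivalent to a \emph{localised} vanishing/splitting at a single point $p\in\cL_f$ (Definition~\ref{d:vanloop-split}), together with the identification of $p$ asserted in the ``more precisely'' part.

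For the localisation I would use the resolution at infinity $\sigma\colon M\to\bP^2$ of $f$ already invoked in the proof of Proposition~\ref{p:comp-at-inf}, with its proper extension $\hat f\colon M\to\bR$. Over each point $q\in\{f_d=0\}$ sits a disjoint tree of exceptional divisors, each irreducible component $E$ of which is either \emph{horizontal} ($\hat f_{|E}$ non-constant, with $\hat f\to\pm\infty$ along the corresponding ends of the affine fibres) or \emph{vertical} ($\hat f_{|E}\equiv c$), and the key structural fact is that each such $E$ maps under $\sigma$ to the \emph{single} point $q\in L^\ity$ over which it was created. Now, given a cluster $\cC$ at $\lambda$, its component $\alpha_t(\cC)$ meets each arc $\gamma_i$ of $\cC$ at a point that runs to infinity along $\gamma_i$ as $t\to\lambda$ (because $f_{|\gamma_i}\to\lambda$). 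Lifting to $M$ and letting $t\to\lambda$, these points accumulate onto the exceptional divisors; since $\hat f$ keeps the finite value $t\to\lambda$ along them, the accumulation cannot be on a horizontal divisor, hence it is on a vertical divisor $E$ with $\hat f_{|E}\equiv\lambda$. As $E$ lies over a unique point $p:=\sigma(E)$, all arcs of $\cC$ have this same point $p$ at infinity, and since $\alpha_t(\cC)\subset F_t$ accumulates onto $E$ we get $p\in\overline{F_t}\cap L^\ity\subset\cL_f$.

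It then remains to match the two global phenomena with the chart-level Definition~\ref{d:vanloop-split} at $p$. If $\cC$ is a vanishing cluster, then $\inf_{q\in\alpha_t(\cC)}\|q\|\to\infty$, so in the chart at $p$ the component $\alpha_t(\cC)$ appears as a bounded piece $C_t^i\setminus\{p\}$ of $C_t$ staying inside $D_\e$ and shrinking to $p$, i.e. $\lim_{t\to\lambda}C_t^i\cap D_\e=\{p\}$ and $\overline{\alpha_t(\cC)}\cap L^\ity=\{p\}$; this is exactly local vanishing, and the fibre component has the unique point $p$ at infinity. If $\cC$ is a splitting cluster, the limit set of $\alpha_t(\cC)$ is disconnected and its two limit pieces separate across the vertical divisor $E$ over $p$; in the chart at $p$ this reads as a component $C_t^i$ meeting $\partial D_\e$ with $\dist(C_t^i,p)\to 0$, which is local splitting, and the two pieces share the point $p$ at infinity. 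The converse is the easy direction: a local shrinking loop at $(p,\lambda)$ forces $\inf\|q\|\to\infty$ for the corresponding affine component (global vanishing), and a branch approaching $p$ whose limit set disconnects forces global splitting, so by the equivalences above $\lambda$ is atypical at infinity.

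The main obstacle is the \emph{uniqueness} of $p$: a priori a connected fibre arc could stretch between two distinct points of $L^\ity$, so one must rule out that the vanishing/splitting is spread across several points. This is precisely what the resolution buys: an atypical value $\lambda$ at infinity is recorded by a vertical divisor $\hat f_{|E}\equiv\lambda$, and such a divisor necessarily lies over a single point of $L^\ity$. The two delicate checks in executing this are (i) showing that the accumulation of $\alpha_t(\cC)$ is onto a vertical rather than a horizontal divisor, which uses the finiteness of $\lim_{t\to\lambda}\hat f=\lambda$ along the component against $\hat f\to\pm\infty$ along horizontal exits; and (ii) translating the monotonicity data $f_{|\gamma_i}\nearrow\lambda$ or $\searrow\lambda$ of the arcs in the cluster into the one-sided ($t<\lambda$ or $t>\lambda$) approach required in Definition~\ref{d:vanloop-split}, so that the local pictures are reproduced on the correct side.
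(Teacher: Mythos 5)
Your reduction of the first claim to Theorem \ref{t:odd-cluster} is reasonable (this is essentially re-proving the cited result of \cite{DJT} rather than invoking it, as the paper does), but the localisation mechanism you propose contains a genuine error. You claim that a sequence of points on $\alpha_t(\cC)$ with $\hat f = t \to \lambda$ finite cannot accumulate on a horizontal divisor and must therefore accumulate on a vertical divisor $E$ with $\hat f_{|E}\equiv\lambda$. This is false: a horizontal divisor $E$ carries points where $\hat f_{|E}=\lambda$ (any value in the image of the non-constant polynomial $\hat f_{|E}$), and fibre ends as well as Milnor arcs do accumulate at such points. The paper's own Example \ref{s:exa4}, $f=x^2+(xy-1)^2$, is a counterexample to your mechanism: resolving at $[0:1:0]$ (blow up twice, $x=x_2z_2^2$, $z=z_2$) gives $\hat f=(x_2-1)^2+x_2^2z_2^2$, so the exceptional divisor $\{z_2=0\}$ is \emph{horizontal} with $\hat f_{|E}=(x_2-1)^2$, and the vanishing loops at the value $0$ (together with the Milnor arcs $\gamma_3,\gamma_7$ of the two vanishing clusters) shrink to the interior point $(1,0)$ of this horizontal divisor. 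There is no vertical divisor with value $0$ at all --- indeed $f^{-1}(0)=\emptyset$ --- so your assertion that ``an atypical value $\lambda$ at infinity is recorded by a vertical divisor $\hat f_{|E}\equiv\lambda$'' also fails in the real setting. The dichotomy you rely on conflates the behaviour of $\hat f$ along the divisor $E$ with the (constant, finite) values of $f$ along the affine fibres crossing $E$.

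Moreover, even after repairing this to the correct statement (accumulation lands in $\hat f^{-1}(\lambda)\cap\sigma^{-1}(L^{\ity})$, which consists of vertical-$\lambda$ divisors \emph{and} finitely many points on horizontal divisors), your uniqueness argument does not close: knowing that each divisor lies over a single point of $L^{\ity}$ does not show that \emph{all} arcs of the cluster $\cC$ accumulate over the \emph{same} point $p$, since a priori different arcs could accumulate over different points, with the limit set of the connected component $\alpha_t(\cC)$ containing affine arcs bridging two distinct points at infinity. Ruling out exactly this scenario is the actual content of the ``more precisely'' part, and it is what the paper proves directly, with no resolution: if the limit set contained an affine arc $A\simeq\bR$ with two distinct points at infinity, then $A$ meets every sufficiently large disk $D_{R'}$, hence the nearby $\alpha_t(\cC)$ would have at least two connected components outside $D_{R'}$ while still meeting all arcs of $\cC$, contradicting the cluster--component correspondence of Remark \ref{r:component}, which holds for every sufficiently large radius. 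Your proposal names this obstacle in its final paragraph but disposes of it only by the unproved (and, as above, false) vertical-divisor claim; the connectedness-versus-large-disks argument is the missing idea.
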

\begin{proof}
 The first claim was proved in \cite[Theorem 1.1, Theorem 2.10]{DJT}. 
 To show the second claim,  
 let $\alpha_t(\cC)$ be the unique connected component of $f^{-1}(t) \m D_R$ (for a radius $R$ large enough, and for $t$ close enough to $\lambda$)  which corresponds to the cluster $\cC$ by  Remark \ref{r:component}.
 
 If $\cC$ is a vanishing cluster at $\lambda$ then for $t$ close enough to $\lambda$,  $\alpha_t(\cC)$ is a loop at some point $p\in L^\ity$, and therefore $(p, \lambda) \in L^\infty \times \bR$ is the unique vanishing point of $\alpha_t(\cC)$ for $t\to \lambda$.  Since all the Milnor arc in the cluster $\cC$ intersect $\alpha_t(\cC)$, it follows that all of them have the point $p$ at infinity. 
 
 If $\cC$ is a splitting cluster at $\lambda$ then, as $t\to \lambda$,  $\alpha_t(\cC)$ splits at least at some 
 point $p\in L^\ity \cap \overline{\alpha_t(\cC)}\subset L^\ity \cap \overline{f^{-1}(\lambda)}$. The limit set $Z :=\lim_{t \to \lambda} \alpha_t(\cC)$ is then non-connected (see \S \ref{ss:sp-van} for the definition of the limit set).
  By contradiction, if $\alpha_t(\cC)$ splits at more than one point, then the limit set $Z\subset f^{-1}(\lambda)\m D_{R}$ contains at least an arc $A \simeq \bR$ which has at infinity two such points. Clearly, the  arc $A$ cannot be contained in the exterior of the disk $D_{R}$ for any large enough $R$.  As $A$ is part of the limit set $Z$, it then follows that the nearby fibre component $\alpha_t(\cC)$ has the same property. This means that $\alpha_t(\cC)$ has at least two connected components in $\bR^{2}\m D_{R}$ for some large enough $R$, which is a  contradiction to its definition. 
The proof of the unicity of the splitting point $p$ is now complete.
  
  Finally, since all the Milnor arcs in the cluster $\cC$ intersect $\alpha_t(\cC)$, it follows that all of them have this point $p$ at infinity.
\end{proof}

By comparing the proof of Theorem \ref{t:djt} to Definition \ref{d:atypnt}, we immediately get the following rephrasing\footnote{One has a similar result for even clusters, with a similar type of proof.}:

\begin{corollary}\label{c:point}
To an odd cluster $\cC$ at $\lambda\in \bR$ there corresponds a unique atypical point $(p,\lambda)\in \cL_f \times \bR$, such that all the Milnor arcs in the cluster $\cC$ have the same point $p$ at infinity.
\fin
\end{corollary}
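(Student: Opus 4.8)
The plan is to read the statement off directly from Definition \ref{d:oddclusters} together with the proof of Theorem \ref{t:djt}, since the substantive work has already been carried out there and the corollary is advertised as a mere rephrasing. First I would recall that, by Definition \ref{d:oddclusters}, an odd cluster $\cC$ at $\lambda$ is either a vanishing cluster or a splitting cluster, and that in both cases it determines a unique connected component $\alpha_t(\cC)$ of $f^{-1}(t)\m D_R$ for $t$ close to $\lambda$ (Remark \ref{r:component}). The two cases are then handled exactly as in the proof of Theorem \ref{t:djt}: in the vanishing case that proof shows $\alpha_t(\cC)$ is a loop based at a single point $p\in L^\ity$ as $t\to\lambda$ and that every Milnor arc of $\cC$ has this point $p$ at infinity; in the splitting case the connectedness argument ruling out an arc $A\simeq\bR$ in the limit set with two points at infinity shows that $\alpha_t(\cC)$ splits at a \emph{unique} point $p\in L^\ity\cap\overline{f^{-1}(\lambda)}$, and again all arcs of $\cC$ share this point $p$. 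Thus in either case I obtain a single candidate point $p$.

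Next I would verify that $(p,\lambda)$ is an atypical point at infinity in the sense of Definition \ref{d:atypnt}, that is, that the global vanishing or splitting of $\alpha_t(\cC)$ in the sense of Definition \ref{d:van-spl} localises to vanishing or splitting at $(p,\lambda)$ in the sense of Definition \ref{d:vanloop-split}. This is precisely the localisation established in \cite{DJT} and invoked in the first claim of Theorem \ref{t:djt}: when $\alpha_t(\cC)$ is a vanishing loop at $p$ one has $\lim_{t\to\lambda}C_t^i\cap D_\e=\{p\}$ in a chart at $p$, which is the localised vanishing condition; when $\alpha_t(\cC)$ splits at $p$, the local Euclidean distance of the splitting component to $p$ tends to $0$, which is the localised splitting condition.

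For uniqueness I would note that the point $p$ attached to $\cC$ is forced: in the vanishing case the shrinking loop has a single point at infinity, and in the splitting case the uniqueness of the splitting point is exactly what the contradiction argument in the proof of Theorem \ref{t:djt} delivers. Hence $(p,\lambda)$ is the unique atypical point associated to $\cC$, and all Milnor arcs of $\cC$ have the same point $p$ at infinity, which is the assertion.

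I do not expect a genuine obstacle here, since the corollary merely repackages the conclusions of Theorem \ref{t:djt}; the only point requiring care is the matching of the two pairs of definitions, the global Definition \ref{d:van-spl} against the localised Definition \ref{d:vanloop-split}, and this matching is supplied by the localisation result of \cite{DJT} already used in proving Theorem \ref{t:djt}.
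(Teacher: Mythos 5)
Your proposal is correct and follows essentially the same route as the paper, which deduces Corollary \ref{c:point} immediately by comparing the proof of Theorem \ref{t:djt} (uniqueness of the vanishing/splitting point $p$ and the fact that all Milnor arcs of the cluster share it) with Definition \ref{d:atypnt}. Your additional care in matching Definition \ref{d:van-spl} with the localised Definition \ref{d:vanloop-split} via the localisation result of \cite{DJT} is exactly the content the paper compresses into the phrase ``we immediately get the following rephrasing.''
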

  However,  let us note that Corollary \ref{c:point} is not anymore true for odd clusters corresponding to connected components of fibres of $f$ which tend to the values $\pm \ity$, see Example \ref{s:exa4}.


\subsection{Points at infinity, clusters and tangents}\label{ss:gen} \

 We recall that the notation $\cL_{f}$ stands for the points at infinity of all the fibres 
of a non-constant polynomial $f: \bR^{2}\to \bR$, and $|\cL_{f}| = \# \cL_{f}$. 
Also recall that a vanishing or a splitting cluster at some value $\lambda$ contains an odd number of Milnor arcs, by Theorem \ref{t:odd-cluster}, and that all these arcs contain the point $p$ in their closure at infinity, by Corollary \ref{c:point}. 

\begin{proposition}\label{p:tangency}\label{p:vantangency}
Let $\cC$ be an odd cluster and let $(p, \lambda)\in \cL_{f}\times \bR$ be its unique atypical point at infinity (cf Corollary \ref{c:point}). Then:
\begin{enumerate}
\rm \item \it If $\cC$ is a splitting  cluster then,  after splitting, the resulting  two germs at $p$ of fibre components,  denoted by $C_{p,1}$ and $C_{p,2}$,  have a common tangent semi-line, call it  $T$, and all the Milnor arcs of the cluster $\cC$ are also tangent to the same semi-line $T$ at $p$.
\rm \item \it  If $\cC$ is a vanishing cluster, let $C_{1}$ and $C_{2}$ be the two local arcs at $p$ of the 
component $\alpha_{t}(\cC)$ which vanishes at $p$ when  $t\to \lambda$.  Then $C_{1}$ and $C_{2}$ are tangent at $p$ to the same semi-line $T$,  and all the Milnor arcs of this cluster are tangent to the same semi-line $T$ at $p$.
\end{enumerate}
\end{proposition}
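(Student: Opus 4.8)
The plan is to reduce everything to a local study at the point $p$ and to compare, branch by branch, the tangent cones at $p$ of the fibres and of the Milnor set. I choose coordinates so that $p=[0:1:0]$ and work in the affine chart $U$ with coordinates $(u,v)=(x/y,1/y)$, in which $p$ is the origin and $L^{\ity}=\{v=0\}$. In $U$ the fibres are cut out by $g_t(u,v)=\tilde f(u,1,v)-t v^{d}$, and since $g_t=g_\lambda-(t-\lambda)v^{d}$ the whole family is the pencil $g_\lambda=(t-\lambda)v^{d}$, in which $v^{d}$ is the only $t$-dependent monomial. Writing $\phi(u):=f_d(u,1)=c\,u^{d_p}+\cdots$, so that $d_p$ is the order of $p$ as a root of $f_d$, the Milnor set $M(f)=\{y f_x-x f_y=0\}$ has near $p$ the local equation obtained by dehomogenising $\mu:=y f_x-x f_y$, whose top homogeneous part satisfies the identity $\mu_d(u,1)=(1+u^{2})\phi'(u)-d\,u\,\phi(u)=c\,d_p\,u^{d_p-1}+\cdots$. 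Thus at $p$ the fibre vanishes to order $d_p$ in the horizontal direction while the Milnor set vanishes to order $d_p-1$; this identity is the backbone of the comparison. The statement to be proved is the equality, as semi-lines at the origin of $U$, of the tangent of the two fibre arcs and of the tangents of all the Milnor arcs of $\cC$; recall from Corollary \ref{c:point} that every arc of $\cC$ already has $p$ as its unique point at infinity.

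First I would settle that the two fibre arcs share one tangent semi-line $T$. Here the vanishing/splitting hypothesis is essential, and not merely the fact that these are branches of some $\overline{F_t}$ at $p$: a generic branch through $p$, coming from the lowest Newton edge joining $(d_p,0)$ and $(0,1)$ when $f_{d-1}(0,1)\ne 0$, is a fixed, $t$-independent arc and can neither vanish nor split. Hence vanishing/splitting forces a degeneration of the lowest part of the Newton polygon of $g_\lambda$ at $p$ in which the $v^{d}$-monomial enters the first edge. In the vanishing case $\alpha_t(\cC)\cup\{p\}$ is, by the proof of Theorem \ref{t:djt}, a loop through $p$ that collapses to $p$ as $t\to\lambda$; its two branches $C_1,C_2$ are the two Newton--Puiseux half-branches carried by this edge, and I would show that they share their leading Puiseux term, hence the semi-line $T$. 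In the splitting case $C_{p,1},C_{p,2}$ are produced by the separation of a single pinching branch of $C_\lambda$, whose tangent is $T$ by construction.

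Next I would show that every Milnor arc of $\cC$ is tangent to the same $T$. The guiding idea is the geometric meaning of the Milnor set: along a Milnor arc one has $\grad f\parallel (x,y)$, so the fibre through each Milnor point is tangent to the distance circle. Computing the tangent direction of a curve at $p$ in the chart $U$, one finds that the chart-slope along a level curve of $f$ equals $(x f_x+y f_y)/f_x$, with the analogous expression $(x\mu_x+y\mu_y)/\mu_x$ for the Milnor set. Using the identity $\mu_d(u,1)=(1+u^{2})\phi'(u)-d\,u\,\phi(u)$ together with the degenerate Newton data isolated in the previous paragraph, I would show that the leading cancellations in these two slopes occur in the same way for $g_\lambda$ and for $\mu$, so that the Milnor branches carried by $\cC$ have the same leading Puiseux term as $C_1,C_2$ (respectively $C_{p,1},C_{p,2}$). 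Since every arc of $\cC$ meets the single component $\alpha_t(\cC)$ (Remark \ref{r:component}) and is asymptotic to $p$, this identifies its tangent semi-line with $T$, which completes both parts.

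The main obstacle is precisely this matching of the leading terms \emph{as semi-lines}. Merely knowing that both the fibre and the Milnor set are tangent to $L^{\ity}$ at $p$ is not enough; one must track the sub-leading Puiseux coefficient, which fixes the side of the tangent (the semi-line rather than the whole line), and one must invoke the monotonicity of $f$ defining the cluster in order to single out exactly the branches belonging to $\cC$. The bookkeeping splits according to the dichotomy tangent versus transverse to $L^{\ity}$, which is exactly the content I would import from Proposition \ref{l:tangent-nontangent}; with that dichotomy in hand, the degenerate-Newton-polygon computation of the two preceding paragraphs closes the argument in each case.
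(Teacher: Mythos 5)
Your proposal has a genuine gap: the two steps that constitute the entire content of the proposition are deferred rather than proved. Both "I would show that they share their leading Puiseux term" (for the two fibre germs $C_{p,1},C_{p,2}$, resp. $C_1,C_2$) and "I would show that the leading cancellations in these two slopes occur in the same way" (for the Milnor branches of $\cC$) are exactly the assertions to be established, and no argument is supplied for either. The Newton-polygon degeneration you isolate does not by itself yield them: a degenerate lowest edge of $g_\lambda$ can carry an edge polynomial with several distinct real roots, i.e.\ branches with distinct tangent lines (a node-type configuration), and nothing in your bookkeeping excludes a connected real arc of $\{g_t=0\}$ converging to two half-branches on \emph{different} tangent semi-lines. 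Excluding that is the whole point, and in the splitting case you simply assert it ("produced by the separation of a single pinching branch of $C_\lambda$, whose tangent is $T$ by construction") -- this is circular. By contrast, the paper's proof is a short global argument that bypasses all local computation: if the two germs had distinct tangent semi-lines $R\neq T$, pick a semi-line $L$ at $p$ strictly between them; connectedness of $\alpha_t(\cC)$ forces it to meet $L$ at points $q(t)\to p$, so $f_{|L}$ is a non-constant one-variable polynomial, hence unbounded along $L$, while $f(q(t))=t\to\lambda$ is finite -- a contradiction. The tangency of the Milnor arcs then follows because $\alpha_t(\cC)$ is trapped in the thin region spanned by $C_{p,1}$ and $C_{p,2}$, and every arc of $\cC$ meets $\alpha_t(\cC)$. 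Your local approach could perhaps be completed, but it would require a careful analysis of how the specific deformation monomial $(t-\lambda)v^{d}$ (which vanishes to order $d$ on $L^{\ity}$) constrains the real branch structure of the family, and that analysis is absent.

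A second, structural problem: your last paragraph imports Proposition \ref{l:tangent-nontangent}. In the paper that proposition is proved \emph{after}, and \emph{by means of}, Proposition \ref{p:vantangency} -- its proof opens by invoking Corollary \ref{c:point} and Proposition \ref{p:tangency} -- so using it here creates a circular dependency. Moreover it does not say what you attribute to it: it states that two consecutive Milnor arcs with different points at infinity, or with different tangent semi-lines at the same point, cannot both correspond to splitting components; it provides no "tangent versus transverse to $L^{\ity}$" dichotomy.
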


\begin{proof}
We give the proof for (a) only, as the one for (b) is analogous.

 By contradiction, suppose that $T_{p}C_{p,1}=R$ and $T_{p}C_{p,2}=T$,  for  two semi-lines $R  \not= T$ at $p$.   
Let us then consider some semi-line $L$ at $p$ in the interior of the angle $\delta$ of measure less than $\pi$ at $p$ spanned by the semi-lines $T$ and $R$. 
   For any $t$ close enough to $\lambda$, the component $\alpha_{t}(\cC)$ must intersect $L$: if not, then $\alpha_t(\cC)$, is contained in one of the two angles spanned by $L^\ity$ and the semi-line $L$, which contradicts our above assumption about two different tangent semi-lines $T$ and $R$.

 Now, since $\alpha_{t}(\cC)$ intersects $L$ for any $t$ close enough to $\lambda$, then it follows
  that the restriction $f_{| L}$ of $f$ to the line $L$ is not constant. Therefore $f_{| L}$ must be unbounded, since the restriction $f_{| L}$ is a non-constant polynomial of one variable.
    More precisely, for any $t$ close to $\lambda$, there is a point of intersection $q(t) \in \alpha_{t}(\cC)\cap L$ which   tends to $p$ when $t\to \lambda$, and  thus the value $f_{| L}(q(t))$ must converge to infinity as $t\to \lambda$.  On the other hand, we have 
  $f_{| L}(q(t)) = t$ and the limit $\lim_{t\to \lambda}f_{| L}(q(t))$ is $\lambda$ by construction. This yields a contradiction.
        
To show the tangency to the semi-line $T$ of the Milnor arcs of the cluster $\cC$, let us remark that $\alpha_t(\cC)$, for $t$ close enough to $\lambda$,  is included in the thin region $\cA$ spanned by the splitting components $C_{p,1}$ and $C_{p,2}$ with common tangent $T$. Any Milnor arc in $\cC$ intersects $\alpha_t(\cC)$ for $t\to \lambda$, so it is constraint  by $\cA$ to have the same tangent $T$ at $p$. 
\end{proof}



 \begin{proposition}\label{l:tangent-nontangent}
  Let  $\gamma$ and  $\delta$ be two consecutive Milnor arcs (in the order of arcs, cf Definition \ref{d:orderarcs}) 
    such that they have either different points at infinity, or the same point at infinity $p\in \cL_{f}$ but are tangent to different semi-lines at $p$.  Let 
  $\cC_{\gamma}$ and $\cC_{\delta}$ be their respective clusters.  Then the corresponding fibre components $\alpha_{t}(\cC_{\gamma})$ and  $\alpha_{t}(\cC_{\delta})$ cannot both split.
\end{proposition}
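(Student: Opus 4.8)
The plan is to argue by contradiction. Assuming that both $\alpha_{t}(\cC_{\gamma})$ and $\alpha_{t}(\cC_{\delta})$ split, I would confront two incompatible facts about the behaviour of $f$ in the angular sector cut out between $\gamma$ and $\delta$: on one side $f$ stays \emph{bounded} there, because the two clusters sit at finite values and, by consecutiveness, no Milnor arc separates $\gamma$ from $\delta$; on the other side $f$ must \emph{blow up} along a generic ray of that sector. Throughout, let $C_{r}$ denote the circle of radius $r$ centred at the origin.

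First I would translate the hypotheses. Since $\gamma$ and $\delta$ either have different points at infinity or share a point at infinity but are tangent to distinct semi-lines, in both cases their asymptotic tangent semi-lines $T_{\gamma}$ and $T_{\delta}$ are different; consequently the open arc $\Sigma\cap C_{r}$ lying between $\gamma\cap C_{r}$ and $\delta\cap C_{r}$ that is free of Milnor points (such an arc exists by consecutiveness, cf. Definition \ref{d:orderarcs}) subtends, as $r\to\infty$, an angle bounded below by $|\mathrm{angle}(T_{\delta})-\mathrm{angle}(T_{\gamma})|>0$. Moreover, a splitting cluster is by definition an odd cluster at a \emph{finite} value $\lambda\in\bR$ (Definition \ref{d:oddclusters}, Theorem \ref{t:djt}); hence the assumption that both split forces $v_{\gamma}(r):=f(\gamma\cap C_{r})\to\lambda_{\gamma}\in\bR$ and $v_{\delta}(r):=f(\delta\cap C_{r})\to\lambda_{\delta}\in\bR$, so both are bounded as $r\to\infty$.

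Next I would use the defining property of the Milnor set (Definition \ref{d:milnorser}): $M(f)\cap C_{r}$ is precisely the set of critical points of the restriction $f_{|C_{r}}$. As $\Sigma\cap C_{r}$ contains no point of $M(f)$, the restriction $f_{|C_{r}}$ is strictly monotone along it and therefore takes values only in the bounded interval with endpoints $v_{\gamma}(r)$ and $v_{\delta}(r)$. Choosing an angle $\theta^{*}$ interior to the limiting sector and avoiding the finitely many directions in $\{f_{d}=0\}$, the ray $L=\{r(\cos\theta^{*},\sin\theta^{*}):r>0\}$ meets $\Sigma\cap C_{r}$ for all large $r$, whence $f(L\cap C_{r})$ is bounded; but $f_{|L}(r)=f_{d}(\cos\theta^{*},\sin\theta^{*})\,r^{d}+(\text{lower order})$ with nonzero leading coefficient, so $f_{|L}(r)\to\pm\infty$. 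This contradiction would finish the proof.

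The step I expect to be the real obstacle is recognising that monotonicity of $f_{|C_{r}}$ is \emph{not} by itself enough: when one cluster is increasing and the other decreasing (a local maximum against a local minimum of $f_{|C_{r}}$), the angular monotonicity across $\Sigma$ is entirely consistent, and a direct comparison of the limits $\lambda_{\gamma}$ and $\lambda_{\delta}$ yields nothing. The way around this is precisely to exploit that ``splitting'' guarantees the \emph{finiteness} of both values, turning the monotonicity into a uniform boundedness of $f$ over the whole sector, which is then defeated by the generic-ray blow-up; this makes the argument independent of the increasing/decreasing alternative. The only auxiliary points needing care are that the sector has positive angular width (ensured by $T_{\gamma}\neq T_{\delta}$) and that it contains a ray with $f_{d}\neq0$ (ensured since $f_{d}$ has only finitely many zero-directions).
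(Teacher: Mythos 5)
Your argument breaks down precisely in the second case of the hypothesis, which is the one the paper actually needs (it is the case invoked in the proofs of Theorem \ref{t:upbound} and Theorem \ref{t:l=1}). The step that fails is the very first translation: you claim that if $\gamma$ and $\delta$ have the same point $p$ at infinity but are tangent to different semi-lines at $p$, then their asymptotic directions in $\bR^2$ differ, so the arc $\Sigma\cap C_{r}$ between them has angular width bounded below. This conflates two distinct notions. The semi-lines of Proposition \ref{p:tangency} are semi-lines \emph{at $p$ in an affine chart of $\bP^{2}$ around the point $p\in L^{\ity}$}, not ray directions in $\bR^{2}$. Any two arcs with the same point $p$ at infinity which approach it from the same side have the \emph{same} asymptotic ray direction in $\bR^{2}$ (the direction determined by $p$), whatever their tangent semi-lines at $p$ are: for $p=[1:0:0]$ and the chart $(u,z)=(y/x,1/x)$, the arcs $y=\sqrt{x}$ and $y=-\sqrt{x}$ (tangent at $p$ to the two opposite semi-lines of $L^{\ity}$), or the arcs $y=1$ and $y=2$ (tangent to the distinct semi-lines $\{u=z,\,z>0\}$ and $\{u=2z,\,z>0\}$), all have ray direction $(1,0)$. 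For such a pair the sector $\Sigma$ pinches: its angular width tends to $0$ and its limiting direction is exactly the direction of $p$, which lies in $\{f_{d}=0\}$ because $p\in\cL_{f}\subset\{f_{d}=0\}$. Hence there is no admissible ray $L$ with $f_{d}\neq 0$ inside the sector, and no contradiction can be reached: boundedness of $f$ on a region pinching onto a zero direction of $f_{d}$ is perfectly possible (e.g.\ $f=x^{2}y$ is bounded on $\{|y|\le x^{-3},\, x\ge 1\}$), and this is exactly the geometry near splitting fibre components.

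In the first case (different points at infinity) your proof is correct, and there it is a genuinely different, more elementary argument than the paper's: the asymptotic directions really do differ, the sector has positive limiting width, and the boundedness of $f$ on $\Sigma$ (monotonicity of $f_{|C_{r}}$ between consecutive Milnor points, finiteness of the two splitting values) collides with the blow-up of $f$ along a generic ray. The paper instead argues, in both cases at once, inside the region between the two split fibre components: that region must either carry a trivial fibration at infinity, which is impossible because all members of such a family would have the same point (respectively the same tangent semi-line) at infinity, or contain an atypical value, which would force an extra Milnor arc between $\gamma$ and $\delta$, contradicting consecutiveness. To repair your proof in the same-point case you would need an argument of this type, carried out in the chart at infinity at $p$ rather than with angular sectors in $\bR^{2}$; the increasing/decreasing alternative you singled out as the main obstacle is indeed neutralised by your finiteness observation, but it was never where the real difficulty lay.
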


\begin{proof}
First of all, the hypotheses imply, via Corollary \ref{c:point} and Proposition \ref {p:tangency},  that $\cC_{\gamma} \not= \cC_{\delta}$. Suppose then, by contradiction, that both components $\alpha_{t}(\cC_{\gamma})$ and  $\alpha_{t}(\cC_{\delta})$ split at $p$.
We first assume that $\gamma$ and $\delta$ have the same point $p\in \cL_{f}$ at infinity but different tangent semi-lines at $p$.
 The splitting can happen only at atypical values of $f$, so let $\lambda_{\gamma}, \lambda_{\delta} \in \bR$ be the atypical values where $\alpha_{t}(\cC_{\gamma})$ and  $\alpha_{t}(\cC_{\delta})$ split at $p$, respectively.
  
  Let $T$ be the semi-line tangent to $\delta$ at $p$, and let $L$ be the semi-line 
tangent to $\gamma$ at $p$. By our hypothesis, $L\not= T$. 

The component $\alpha_{t}(\cC_{\gamma})$ splits as $t\to \lambda_{\gamma}$ into two branches $C^{1}_{\gamma}$ and $C^{2}_{\gamma}$, and the component $\alpha_{t}(\cC_{\delta})$ splits as $t\to \lambda_{\delta}$ into two branches $C^{1}_{\delta}$ and $C^{2}_{\delta}$. Since there is no other Milnor arc between $\gamma$ and  $\delta$ in the counterclockwise ordering, there must be a family of fibre components between $C^{i}_{\gamma}$ and $C^{j}_{\delta}$, for appropriate $i, j\in \{1,2\}$,
which is a topologically trivial family at infinity (in the sense employed in  Definition \ref{d:atypical}). But 
there cannot be a trivial fibration at infinity since all the fibres in such a trivial fibration must have the same tangent semi-line at $p$. This implies that  there exist an atypical point 
at infinity $(p, \lambda)$, with $\lambda$ in the open interval between  $\lambda_{\gamma}$ and $\lambda_{\delta}$. In turn, this implies that there exists a Milnor arc ``between''
  $\gamma$ and  $\delta$ in the counterclockwise ordering, which is a contradiction to our assumption.  

\smallskip

Let us now assume that the consecutive  Milnor arcs $\gamma$ and $\delta$  do not have the same  point at infinity, and that
the corresponding clusters  are splitting like described above. Then, as observed in the preceding paragraph,  the region $\mathcal{R}_{ij}$ outside a large enough disk $D$ and  between the two corresponding  fibre components  $C^{i}_{\gamma}$ and $C^{j}_{\delta}$ is either filled with a trivial 
fibration (defined by the appropriate restriction of $f$),  or there is no such trivial fibration containing $C^{i}_{\gamma}$ and $C^{j}_{\delta}$. Since the connected components $C^{i}_{\gamma}$ and $C^{j}_{\delta}$ have different points at infinity, they cannot live in a trivial family of connected fibres. But if there is no fibration between $C^{i}_{\gamma}$ and $C^{j}_{\delta}$, then there exists an atypical fibre at infinity in that region $\mathcal{R}_{ij}$, and thus there exists another Milnor arc ``between'' $\gamma$ and $\delta$, which is a contradiction to our assumption that $\gamma$ and $\delta$ are consecutive Milnor arcs.
  \end{proof} 
 
  \begin{remark}
Example \ref{exa1} shows two consecutive Milnor arcs $\gamma$ and  $\delta$ belonging to different clusters,  such that both fibre components $\alpha_t(\cC_\gamma)$ and $ \alpha_t(\cC_{\beta})$ split. However they have the same tangent line. This shows that the hypotheses of Proposition \ref {l:tangent-nontangent} are sharp.
 \end{remark}

\section{Index at infinity via Milnor arcs and clusters} \label{s:index}

Let $f:\bR^{2}\to \bR$ be a non-constant polynomial function with isolated singularities.  We assume as in \S\ref{ss:arcs} that  the origin $0\in \bR^{2}$ is a point in $\Omega(f)$. Let $D$ be a disk centred at the origin of large enough radius such that it contains $\Sing f$ in its interior and satisfies Proposition \ref{p:milnorarcs}.  Let $S^1$ be the unitary circle in $\bR^2$.  The restriction of the \textit{Gauss map} $\psi:=\frac{\grad f}{\|\grad f\|}$  to the circle $C:= \partial D$ defines a $C^\ity$  oriented map $ \psi _{|C}: C \to S^1$ between the circles $C$ and $S^1$ endowed with their counterclockwise orientation.

Durfee introduced in \cite{Dur2}  the \textit{index at infinity of} $f$: 
\begin{equation}\label{eq:indinfty}
 \ind _\ity (f) := \deg (\psi_{|C}).
\end{equation}


\subsection{Index of a Milnor arc, after \cite{Dur2}}\label{ss:ind}
 Recall that the Milnor set $M(f)$ is the set of points where the fibres of $f$ are tangent to the level sets of the Euclidean distance function $\rho$,  and that, by Definition \ref{p:milnorarcs}, the Milnor arcs do not intersect $\Sing f$.  For any point $q$ of a Milnor arc $\gamma$ outside a disk $D  = \{\rho(x,y)  \le R \}$ of large enough radius $R$,  the fibre of $f$ passing through $q$ may be in only one of the following three situations: 
 
\begin{enumerate}
\item locally inside the disk $D$, and then one defines the index $i(\gamma) :=  +\frac{1}{2}$
\item  locally outside $D$, and then one defines the index $i(\gamma) :=  -\frac{1}{2}$,
\item a local half-branch inside $D$ and the other local half-branch outside $D$, in which case one defines the index $i(\gamma) :=  0$. Actually, we will see in the proof of Lemma \ref{l:durfee1} that, for a generic choice of the origin, the Milnor set $M(f)$ does not contain Milnor arcs $\gamma$ of index $0$.
\end{enumerate}

It then follows that along a fixed fibre component, outside the disk $D$,  the distance function has alternating local maxima and minima in the counterclockwise order, and without counting the inflexion points.
Therefore we get:
\begin{lemma}\label{l:consec-index}
The consecutive Milnor arcs (cf. Definition \ref{d:orderarcs}) in the same cluster,  without counting the index 0 arcs, must have alternating index signs.
\fin
\end{lemma}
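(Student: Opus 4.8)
The plan is to reduce the statement to the elementary alternation fact recorded just above the lemma — that the restriction of $\rho$ to a single connected fibre component has its genuine local maxima and minima alternating as one traverses the curve — and then to match this intrinsic order along the fibre with the counterclockwise order of the Milnor arcs on the circle $C$ coming from Definition \ref{d:orderarcs}.

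First I would use Remark \ref{r:component} to fix, for $t$ close enough to $\lambda$ and $R$ large enough, the unique connected component $\alpha_t(\cC)$ of $f^{-1}(t)\m D_R$ which meets all the Milnor arcs $\gamma_k,\ldots,\gamma_{k+l}$ of the cluster $\cC$. Since we have removed the disk $D_R$, this component is an arc with both endpoints on $C=\partial D_R$, along which $\rho\ge R$. Each $\gamma_i$ meets $\alpha_t(\cC)$ in a point $q_i(t)$ which, by the definition of the Milnor set, is a point where $f^{-1}(t)$ is tangent to the distance sphere through it, i.e. a critical point of $\rho_{|\alpha_t(\cC)}$. By the definition of the index in \S\ref{ss:ind}, the point $q_i(t)$ is a local maximum of $\rho_{|\alpha_t(\cC)}$ when $i(\gamma_i)=+\frac{1}{2}$ (the fibre curves locally inside the distance disk), a local minimum when $i(\gamma_i)=-\frac{1}{2}$ (the fibre curves locally outside), and an inflection point when $i(\gamma_i)=0$, which we discard.

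Next I would invoke the alternation stated before the lemma: along the arc $\alpha_t(\cC)$, between two consecutive genuine local maxima of $\rho$ there must lie a local minimum and conversely, so the genuine extrema alternate in type as one traverses $\alpha_t(\cC)$. It then suffices to show that two arcs $\gamma_i,\gamma_{i+1}$ that are consecutive in the counterclockwise ordering correspond to two extrema $q_i(t),q_{i+1}(t)$ that are consecutive along $\alpha_t(\cC)$; the alternation then forces $i(\gamma_i)=-i(\gamma_{i+1})$ for arcs of nonzero index.

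This matching of the two orderings is the main obstacle. I would argue it as follows. All arcs of the cluster share a single point $p\in\cL_f$ at infinity (Corollary \ref{c:point} in the odd case, and its analogue for even clusters), and by Proposition \ref{p:vantangency} the component $\alpha_t(\cC)$ is squeezed, as $t\to\lambda$, into a thin region around a common tangent semi-line at $p$; consequently the order in which the arc $\alpha_t(\cC)$ sweeps through the points $q_i(t)$ agrees with the counterclockwise order of the intersection points $p_i=\gamma_i\cap C$ on $C$. Now if $\gamma_i$ and $\gamma_{i+1}$ are consecutive on $C$, then no point of $M(f)\cap C$ lies between $p_i$ and $p_{i+1}$; hence the sub-arc of $\alpha_t(\cC)$ joining $q_i(t)$ to $q_{i+1}(t)$ meets no further Milnor arc and therefore carries no intermediate critical point of $\rho$. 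Thus $q_i(t)$ and $q_{i+1}(t)$ are consecutive extrema along $\alpha_t(\cC)$, completing the argument. The one step deserving care is precisely the claim that the thin-region geometry near $p$ forces the two orderings to coincide, which I would pin down by a local analysis near $p$.
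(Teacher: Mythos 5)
Your overall reduction is the same as the paper's: the paper's entire proof of Lemma \ref{l:consec-index} is the sentence preceding it (along a fixed fibre component outside $D$ the genuine local maxima and minima of $\rho$ alternate, inflexion points being skipped, ``in the counterclockwise order''), with the QED box placed in the statement. Your dictionary $i(\gamma)=+\frac12 \leftrightarrow$ local maximum of $\rho$, $-\frac12 \leftrightarrow$ local minimum, $0 \leftrightarrow$ inflexion, and your use of Remark \ref{r:component} to put all the crossings on one component $\alpha_t(\cC)$, are exactly what the paper intends.

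The gap is in the step you yourself single out as the crux, and your proposed justification of it does not close it. The inference ``no point of $M(f)\cap C$ lies between $p_i$ and $p_{i+1}$, hence the sub-arc of $\alpha_t(\cC)$ joining $q_i(t)$ to $q_{i+1}(t)$ meets no further Milnor arc'' is a non sequitur: that sub-arc does not lie on $C$, it lies in the region $\{\rho\ge R\}$. What is actually available is a separation argument: the Milnor arcs are pairwise disjoint, each runs from $C$ to infinity, so they cut $\{\rho\ge R\}$ into sectors, and each arc meets each fibre at most once because $f$ is strictly monotone along it (Proposition \ref{p:milnorarcs}(c) --- a fact you use tacitly to make $q_i(t)$ well defined but never invoke). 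With this, the sub-arc from $q_i(t)$ to $q_{i+1}(t)$ either stays in the sector bounded by $\gamma_i$, $\gamma_{i+1}$ and the arc of $C$ between $p_i$ and $p_{i+1}$ (your desired conclusion), \emph{or} it exits on the other side of $\gamma_i$ and wraps all the way around $D_R$, crossing every other Milnor arc exactly once before reaching $\gamma_{i+1}$ from the far side. Your argument does not exclude this wrap-around configuration, and it must be excluded: if it occurred, e.g.\ a three-arc cluster crossed in the order $\gamma_1,\gamma_3,\gamma_2$, the alternation along the fibre would force $i(\gamma_1)=i(\gamma_2)$ for the consecutive pair $(\gamma_1,\gamma_2)$, i.e.\ the lemma would \emph{fail}. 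So this is not a removable technicality; it is the actual content of the ordering claim.

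Excluding the wrap-around is precisely where the localisation of a cluster at its single point $p$ at infinity (Theorem \ref{t:djt}, Proposition \ref{p:vantangency}) has to be used in a quantitative way --- one needs that the portion of $\alpha_t(\cC)$ between consecutive cluster crossings stays in a small neighbourhood of $p$, where the disjoint arcs tangent to the common semi-line are linearly ordered across a thin wedge. Your ``thin region'' sentence gestures at this, but, as you concede, it is asserted rather than proved, and the word ``consequently'' carries the whole weight. Two smaller inaccuracies: $\alpha_t(\cC)$ is in general \emph{not} ``an arc with both endpoints on $C$'' (a vanishing component lies entirely outside $D_R$ with both ends at infinity, as in Examples \ref{s:exa4} and \ref{s:exam2}); and Corollary \ref{c:point} and Proposition \ref{p:vantangency} are stated only for odd clusters, whereas the lemma concerns all clusters --- the paper merely footnotes the even case, so invoking ``the analogue for even clusters'' is borrowing something not actually on record.
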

Now by  Theorem \ref{t:odd-cluster}, Remark \ref{r:component}, and Lemma \ref{l:consec-index}, we directly get the following consequence:
\begin{corollary} \label{c:indexcluster}
Any splitting cluster at $\lambda\in \bR$ has  total index $+\frac12$. Any vanishing cluster at $\lambda\in \bR  \cup \{\pm\infty\}$ has total index $-\frac12$. Any even cluster has total index $0$.
 \fin
\end{corollary}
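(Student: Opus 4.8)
The plan is to reduce the whole statement to a Morse-type count of the critical points of the squared distance $\rho$ along the single fibre component attached to the cluster. By Remark~\ref{r:component}, a cluster $\cC$ at $\lambda$ determines, for $t$ close to $\lambda$, a unique connected component $\alpha_t(\cC)$ of $f^{-1}(t)\m D$ meeting all the Milnor arcs of $\cC$; conversely those arcs are exactly the points where $\alpha_t(\cC)$ crosses $M(f)$, i.e. the critical points of $\rho_{|\alpha_t(\cC)}$, and the index $+\frac12$, respectively $-\frac12$, of an arc (cf. Subsection~\ref{ss:ind}) records whether the corresponding critical point is a local maximum or a local minimum of $\rho_{|\alpha_t(\cC)}$. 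Discarding the index-$0$ arcs, which do not occur for a generic choice of the origin (Subsection~\ref{ss:ind}), the total index of $\cC$ is therefore $\tfrac12(\#\max-\#\min)$ taken along $\alpha_t(\cC)$.

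First I would dispose of the even case, which needs no geometry at all. By Lemma~\ref{l:consec-index} the signs of consecutive arcs of $\cC$ alternate; hence if $\cC$ has an even number of arcs the maxima and minima occur in equal numbers and the total index is $0$. This settles the last assertion, since by Theorem~\ref{t:odd-cluster} together with Definition~\ref{d:oddclusters} an even cluster is neither vanishing nor splitting.

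For an odd cluster the same alternation forces $\#\max-\#\min=\pm 1$, so the total index is $\pm\frac12$, equal to $\frac12$ times the common sign of the two \emph{extreme} arcs of $\cC$, i.e. of the first and last critical points of $\rho$ along $\alpha_t(\cC)$. It then remains to read off this sign from the behaviour of $\rho_{|\alpha_t(\cC)}$ at the two ends of the arc. Traversing $\alpha_t(\cC)$ from one end to the other, near an end lying on $C=\partial D$ one has $\rho=R$, which is the minimal value of $\rho$ on the exterior arc, so $\rho$ increases as one moves inward and the adjacent extreme critical point is a local maximum (index $+\frac12$); near an end running to infinity towards $p\in L^{\ity}$ one has $\rho\to\infty$, so $\rho$ decreases as one moves inward and the adjacent extreme critical point is a local minimum (index $-\frac12$).

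Finally I would match the two odd cases to the topological type of $\alpha_t(\cC)$ furnished by Theorem~\ref{t:djt} and the localised Definition~\ref{d:vanloop-split}. If $\cC$ is a splitting cluster, then for $t$ on the appropriate side of $\lambda$ the component $\alpha_t(\cC)$ is a connected arc whose tip approaches the splitting point $p$ and pinches there as $t\to\lambda$, breaking into the two branches with common point $p$ at infinity; both ends of $\alpha_t(\cC)$ thus lie on $C$, both extreme arcs are maxima, and the total index is $+\frac12$. If $\cC$ is a vanishing cluster, then $\alpha_t(\cC)$ is a loop at $p$, i.e. an arc both of whose ends run to infinity towards $p$ and which recedes to $p$ as $t\to\lambda$; hence both extreme arcs are minima and the total index is $-\frac12$. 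The one place where more than bookkeeping is required — and which I expect to be the main obstacle — is exactly this identification of the type of $\alpha_t(\cC)$ (a finger with both feet on $C$ versus a loop with both ends at infinity) from the splitting/vanishing dichotomy; for this I would rely on the descriptions established in the proof of Theorem~\ref{t:djt}.
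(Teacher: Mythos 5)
Your proposal is correct and follows essentially the same route as the paper, which states the corollary as a direct consequence of Theorem~\ref{t:odd-cluster}, Remark~\ref{r:component} and Lemma~\ref{l:consec-index}: namely the sign-alternation of consecutive arcs along the single component $\alpha_t(\cC)$ (giving $0$ for even clusters and $\pm\frac12$ for odd ones), combined with reading the extreme signs off the end behaviour of $\rho$ on $\alpha_t(\cC)$ -- both ends on $C$ for a splitting component versus both ends at infinity for a vanishing one, as furnished by Theorem~\ref{t:djt} and the descriptions from \cite{Mo}. The only cosmetic caveat is that for vanishing clusters at $\lambda=\pm\infty$ the component need not be a loop at a single point $p$ (it may stretch along a whole segment of $L^{\ity}$, cf.\ Example~\ref{s:exam2}), but your argument only uses that $\rho\to\infty$ at both ends, which still holds there.
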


Let us set the following notation:
$$i_{p,c} := \sum_{\gamma} i(\gamma)$$
 where $\gamma$ runs over all  Milnor arcs  $\gamma$  such that $p\in \overline{\gamma}$ and that $\lim f_{|\gamma}= c$.

   
\begin{lemma}\cite[p. 1356]{Dur2}\label{l:durfee1}
\begin{equation}\label{eq:indexequation}
\ind_{\infty}(f)  = 1 + \sum_{p\in L^{\ity}, \  c\in \bR \cup \{\pm \infty\}} i_{p,c}.
\end{equation} 
\end{lemma}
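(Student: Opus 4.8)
The plan is to compute $\deg(\psi_{|C})$ directly, splitting the rotation of $\grad f$ along $C$ into the rotation of the radial direction (which produces the ``$1$'') plus the rotation of $\grad f$ \emph{relative} to the radial direction (which will produce $\sum_{p,c} i_{p,c}$). Parametrise $C=\partial D$ counterclockwise by $c(\phi)=R(\cos\phi,\sin\phi)$, write $\hat r(\phi)=(\cos\phi,\sin\phi)$ for the outward radial unit vector and $\hat t(\phi)=(-\sin\phi,\cos\phi)$ for the counterclockwise unit tangent, and set
\[
h(\phi):=\langle \grad f(c(\phi)),\hat r(\phi)\rangle,\qquad g(\phi):=\langle \grad f(c(\phi)),\hat t(\phi)\rangle .
\]
Since $\Sing f$ lies inside $D$, for $R$ large $\grad f$ does not vanish on $C$, so $(h,g)$ is a closed curve avoiding the origin. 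Writing the argument of $\grad f$ as $\phi+\beta$, where $\beta=\arg(h,g)$ is the angle of $\grad f$ in the radial frame, the total turning of $\grad f$ is $2\pi+\Delta\beta$, whence $\ind_\ity(f)=\deg\psi_{|C}=1+\mathrm{wind}(h,g)$, the winding number of $(h,g)$ about the origin.

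Next I would record the dictionary on $C$. From $\tfrac{d}{d\phi}f(c(\phi))=R\,g(\phi)$, the zeros of $g$ are exactly the critical points of $f_{|C}$, and these are precisely the points of $M(f)\cap C$, because $g(q)=0$ means $\grad f$ is radial at $q$. After a generic choice of origin in $\Omega(f)$ (and of $R$) these finitely many points are nondegenerate, i.e.\ $g'\neq 0$ there; equivalently $f_{|C}$ is Morse, every tangency of a fibre to $C$ is a genuine local extremum of $f_{|C}$, and no Milnor arc meeting $C$ has index $0$. At such a point $q$, the sign of $h(q)$ records whether $\grad f$ points outward ($h>0$) or inward ($h<0$), and the sign of $g'(q)$ records whether $q$ is a local minimum ($g'>0$) or maximum ($g'<0$) of $f_{|C}$.

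The winding number of $(h,g)$ equals the signed count of crossings of any ray from the origin. Using the positive $h$-axis, hit exactly at the outward Milnor points, gives $\mathrm{wind}(h,g)=\sum_{q:\,h(q)>0}\mathrm{sign}\,g'(q)$, while the negative $h$-axis, hit at the inward Milnor points, gives $\mathrm{wind}(h,g)=\sum_{q:\,h(q)<0}\bigl(-\mathrm{sign}\,g'(q)\bigr)$ (the opposite sign arises because a counterclockwise crossing of the ray $\beta=\pi$ forces $g$ to decrease). Averaging the two,
\[
\mathrm{wind}(h,g)=\tfrac12\sum_{q\in M(f)\cap C}\mathrm{sign}\,h(q)\cdot\mathrm{sign}\,g'(q).
\]
It then remains to prove the local identity $\mathrm{sign}\,h(q)\cdot\mathrm{sign}\,g'(q)=2\,i(\gamma)$ for the Milnor arc $\gamma$ through $q$; granting it, and since for $R$ large each Milnor arc meets $C$ in a single point, the right-hand side becomes $\sum_\gamma i(\gamma)=\sum_{p,c} i_{p,c}$ (each arc has a unique point $p$ at infinity and a unique limit value $c=\lim f_{|\gamma}$), which is exactly \eqref{eq:indexequation}.

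The crux, and the step I expect to be the main obstacle, is this local sign identity. I would establish it by a four-case analysis according to the two independent data $\mathrm{sign}\,h$ (gradient outward or inward) and the position of the fibre (inside or outside $D$, i.e.\ $i(\gamma)=\pm\tfrac12$). In each case one reads off, from the tangency of $\{f=f(q)\}$ to $C$ at $q$ together with the direction of increase of $f$, whether $q$ is a minimum or a maximum of $f_{|C}$: for instance if the fibre lies locally inside $D$ (so $i(\gamma)=+\tfrac12$) and $\grad f$ points outward, then the points of $C$ near $q$ lie outside the fibre and hence carry larger values of $f$, so $q$ is a minimum and $g'>0$, giving $\mathrm{sign}\,h\cdot\mathrm{sign}\,g'=+1$; the remaining three cases are checked identically and all yield $+1$ when $i(\gamma)=+\tfrac12$ and $-1$ when $i(\gamma)=-\tfrac12$. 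The points demanding care are the orientation bookkeeping in the ray-crossing count (to fix the two signs correctly) and the transversality/genericity argument guaranteeing that $f_{|C}$ is Morse, so that only index $\pm\tfrac12$ arcs occur and all crossings of $(h,g)$ with the $h$-axis are transverse.
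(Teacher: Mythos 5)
Your proposal is correct and follows essentially the same route as the paper: writing $\grad f$ in the radial frame $(h,g)$ is exactly the paper's map $\phi=\psi_{|C}\cdot\bigl(\frac{z}{\|z\|}\bigr)^{-1}$, your identification of the zeros of $g$ with $M(f)\cap C$ and the averaged ray-crossing count reproduce the paper's degree computation at the two regular values $\pm 1\in S^1$, and your four-case sign identity $\mathrm{sign}\,h\cdot\mathrm{sign}\,g'=2\,i(\gamma)$ is the paper's case analysis of the local orientations $\ori(T_q\phi)$, including the genericity argument excluding index-$0$ arcs.
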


\begin{proof}
From Definition \ref{d:milnorser}, after identifying $\bR^{2}$ with $\bC$, we get the defining equality:
\begin{equation}\label{eq:milnorequality}
 M(f) =  \left\{ q\in \bC \  \Big\vert \  \frac{\grad f(q)}{\|\grad f(q) \|} = \pm \frac{q}{\|q\|} \right\} .
\end{equation}
Let us therefore consider the oriented $C^\ity$-map $\phi:= \psi _{|C} \cdot ( \frac{z}{\|z\|})^{-1}: C \to S^1$, where $z$ denotes the variable in $\bC$, and where both circles $C$ and $S^1$ are endowed with their counterclockwise orientation. The map $\phi$ is by definition the multiplication of  $\psi _{|C}$ with the clockwise rotation  $(\frac{z}{\|z\|})^{-1} =\frac{\overline{z}}{\|z\|} : C \to S^{1}$ of  degree $-1$.  Using the winding number interpretation of the degree,  one obtains the equality:
\begin{equation}\label{eq:eqind}
  \deg (\phi) = \deg (\psi_{|C}) -1 = \ind_ \ity (f)-1.
\end{equation}

Without loss of generality, we may assume that $1,-1 \in S^1$ are regular values of $\phi$.  We obtain:
\begin{equation}\label{eq:inddef}
 \deg (\phi) =  \frac{1}{2} \left( \sum_{q \in \phi^{-1}(1)}  \ori (T_q  \phi) + \sum_{q \in \phi^{-1}(-1)}  \ori (T_q  \phi) \right)
=  \sum_{q \in M(f) \cap C} \frac{1}{2} \ori (T_q  \phi),
\end{equation}
where $\ori (T_q \phi)$ denotes the orientation of the tangent map, and where the last equality follows  since we have $\phi^{-1}(\{-1, 1\}) =  M(f)\cap C$ in view of \eqref{eq:milnorequality}. 

\smallskip

Let us explain here what are the local orientations $\ori (T_q  \phi)$.
Let $q \in \gamma \cap C$, for some Milnor arc $\gamma$. Referring to the definition in the beginning of \S \ref{ss:ind}, we have the following correspondences (where ``increasing'' means here counterclockwise, and ``decreasing'' means clockwise):

\noindent $\bullet$ In the case (a) the Gauss map  $\psi$ is increasing relative to the radial map $\frac{z}{\|z\|}$,  and thus $\ori (T_q\phi) = +1$. 

\noindent $\bullet$ In the case  (b) the Gauss map  $\psi$ is decreasing relative to the radial map $\frac{z}{\|z\|}$,  and therefore  $\ori (T_q\phi) = -1$. 

\noindent $\bullet$ The case (c) at the point $q$ means that this point is a local maximum or a local minimum for  the map $\phi$, thus a critical point. This situation cannot occur because we have assumed that $q$ is a regular point of $\phi$.

  From \eqref{eq:eqind} and  \eqref{eq:inddef} we then obtain:
 \[ \ind_\ity (f) = 1 + \sum_{\gamma} i(\gamma),\] 
 where the sum runs over all the Milnor arcs of $f$.  
\end{proof}


  In  case $F_{+}\cup F_{-}$ contains  a compact component,  by Lemma \ref{l:circle} we get that $|\cL_{f}| =0$,  and all fibres of $f$ are either compact and connected, or empty. A non-empty fibre of $f$ is then homomorphic to a circle. The winding number of $\grad f$ over such a circle is 1, and it follows that $\ind_{\ity}(f)=1$.
Therefore, in the following  we will tacitly consider only polynomials which have at least a
 non-compact fibre.

Let $\Sp(p, \lambda)$ and $\Va(p, \lambda)$ denote the numbers of connected components of fibres of $f$ outside the large disk $D$  which are splitting or  vanishing,  at the point $(p, \lambda)$, respectively.

Let  $\Va(\pm \infty)$ denote the number of components of $F_{+}\cup F_{-}$.
Note that there are two type of components which are counted in $\Va(\pm \infty)$: those  which tend to a nontrivial segment of the line at infinity as the value of $f$ tends to infinity, and those which tend to a point $p\in L^{\ity}$ as the value of $f$ tends to infinity.  These two types are illustrated in Example  \ref{ex:Hlines} and Example \ref{s:exam2}.  

\begin{theorem}\label{t:main1}
\begin{equation}\label{eq:ind-sp-va}
  \ind_{\infty}(f)  = 1 + \frac12 \sum_{p\in \cL_{f}, \lambda\in \bR}\Sp(p, \lambda) - \frac12 \sum_{p\in \cL_{f}, \lambda\in \bR}\Va(p, \lambda)  - \frac12  \Va(\pm \infty)
\end{equation}
\end{theorem}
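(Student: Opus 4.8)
The plan is to start from Durfee's index formula \eqref{eq:indexequation} in Lemma~\ref{l:durfee1}, which I rewrite as $\ind_\infty(f)=1+\sum_\gamma i(\gamma)$ with the sum running over all Milnor arcs of $f$, and then to regroup this sum cluster by cluster. Since every Milnor arc is, by Proposition~\ref{p:milnorarcs}(c), either increasing or decreasing towards a well-defined limit $\lambda\in\bR\cup\{\pm\infty\}$, and since a cluster (Definition~\ref{d:cluster}) is by construction a \emph{maximal} run of consecutive arcs with the same monotonicity and the same limit, the clusters partition the set of all Milnor arcs. Writing $i(\cC):=\sum_{\gamma\in\cC}i(\gamma)$ for the total index of a cluster, I thus obtain
\[
  \ind_\infty(f)=1+\sum_{\cC}i(\cC),
\]
the sum now ranging over all clusters of $f$.

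The second step is to evaluate each $i(\cC)$ by appealing to Corollary~\ref{c:indexcluster}: a splitting cluster contributes $+\frac12$, a vanishing cluster contributes $-\frac12$, and an even cluster contributes $0$. Consequently the even clusters drop out of the sum entirely, and, recalling that by Definition~\ref{d:oddclusters} every odd cluster is exactly one of these two types,
\[
  \sum_{\cC}i(\cC)=\frac12\,\#\{\text{splitting clusters}\}-\frac12\,\#\{\text{vanishing clusters}\}.
\]
It then remains only to identify these two cluster counts with the invariants appearing on the right-hand side of \eqref{eq:ind-sp-va}.

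For this identification I would use the injective correspondence $\cC\mapsto\alpha_t(\cC)$ of Remark~\ref{r:component} between clusters and connected components of $f^{-1}(t)\m D_R$. For a finite value $\lambda\in\bR$, Corollary~\ref{c:point} attaches to each odd cluster a unique atypical point $(p,\lambda)\in\cL_f\times\bR$, and $\alpha_t(\cC)$ is precisely the splitting (resp.\ vanishing) component at $(p,\lambda)$ counted by $\Sp(p,\lambda)$ (resp.\ $\Va(p,\lambda)$); conversely each such component comes from a unique cluster. Hence the splitting clusters, which by Corollary~\ref{c:indexcluster} occur only at finite values, are counted by $\sum_{p\in\cL_f,\,\lambda\in\bR}\Sp(p,\lambda)$, and the vanishing clusters at finite values by $\sum_{p\in\cL_f,\,\lambda\in\bR}\Va(p,\lambda)$; the vanishing clusters at $\lambda=\pm\infty$ correspond under the same map to the non-compact components of $F_+\cup F_-$, i.e.\ they are counted by $\Va(\pm\infty)$. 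Substituting these three counts yields \eqref{eq:ind-sp-va}.

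I expect the main obstacle to be the bookkeeping at $\lambda=\pm\infty$, where Corollary~\ref{c:point} no longer applies (the remark following it warns that a cluster at $\pm\infty$ need not determine a single point $p$). There one must argue directly that every non-compact component of $F_\pm$ gives a \emph{vanishing} cluster, of total index $-\frac12$ by Corollary~\ref{c:indexcluster}, so that no splitting clusters and no residual even clusters are hiding in $F_+\cup F_-$; and one must take care to count at the level of the components of $f^{-1}(t)\m D_R$ rather than of affine components, since by Remark~\ref{r:component} two components $\alpha_t(\cC)$ may belong to the same connected component of the affine fibre.
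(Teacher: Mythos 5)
Your proposal is correct and follows essentially the same route as the paper: regrouping Durfee's formula \eqref{eq:indexequation} cluster by cluster, evaluating each cluster's total index via Corollary \ref{c:indexcluster}, and identifying the splitting/vanishing cluster counts with $\Sp(p,\lambda)$, $\Va(p,\lambda)$ and $\Va(\pm\infty)$ through the injective correspondence $\cC\mapsto\alpha_t(\cC)$ of Remark \ref{r:component}. The bookkeeping at $\lambda=\pm\infty$ that you flag as the main obstacle is resolved exactly as you suggest, since Corollary \ref{c:indexcluster} already assigns total index $-\frac12$ to every cluster at $\pm\infty$ and these correspond bijectively to the components counted by $\Va(\pm\infty)$.
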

\begin{proof}
By gathering the indices of the Milnor arcs of the same cluster, one recasts \eqref{eq:indexequation} as: 
\begin{equation}\label{e:index-Cluster}
\ind _{\ity} (f) = 1 + \sum_{\cC} \sum_{\gamma \in \cC} i(\gamma),
\end{equation} 
where the sum runs over all Milnor clusters.    

Let us compute the total index $ \sum_{\gamma \in {\cC}} i(\gamma)$ for each cluster $\cC$. 
 By Corollary \ref{c:indexcluster} the number of odd clusters  of total index $+\frac{1}{2}$, or $-\frac{1}{2}$,  is equal to the number $\Sp(p,\lambda)$, or $\Va(p,\lambda)$, respectively.  Moreover, the number of clusters at $\pm \ity$ is equal to the number $\Va(\pm \ity)$ and the total index of each of these clusters is $-\frac{1}{2}$.  Even clusters have total index $0$, thus do not contribute to the formula.

Our formula \eqref{eq:ind-sp-va} follows by plugging in all these data in \eqref{e:index-Cluster}.
\end{proof}

\subsection{The local degree at infinity}\label{ss:localdeg}

We continue to consider a polynomial $f: \bR^{2}\to \bR$ of degree $d\ge 2$, and we prove here a key result that will be used for finding an upper bound of the index at infinity.
\begin{definition}\label{d:dp}
  Let $d_{Re}$ denote the number of real solutions of the equation $f_{d}=0$ counted with multiplicity. We call it the \emph{real degree}
of $f_{d}$.

 We denote by $d_{p}$ the order of $f_{d}$ at the point $p\in \{f_{d} =0\}\subset \bP^{1}$.  This is equal to the multiplicity of the linear factor of $f_{d}$ corresponding to $p$.
\end{definition} 

\begin{remark}
 The inequality $d_{p}>0$ does not imply that $p\in \cL_{f}$, like in the example $f= xy^{2}+x$, where $p:=[1:0:0]\in \{f_{d} =0\}$ with $d_{p}=2$, but $p\not\in  \cL_{f}$. 

By this reason,   out of the obvious inequalities:
\begin{equation}\label{eq:re}
d_{Re} \ge \sum_{p\in  \{ f_{d} = 0\} \cap L^{\ity} } d_{p} \ge  \sum_{p\in \cL_{f}}d_{p}
\end{equation}
 the second may be strict, for instance in the example $f = x^{4}y + y^{3}$ where $d_{Re} =5$, but one has $\sum_{p\in \cL_{f}}d_{p} =1$ because $\cL_{f} = \{ [1:0:0]\}$.  
 \end{remark}

\begin{remark}\label{r:durfeeeq}
 Let $p\in \{f_{d}=0\}\cap L^{\ity}$. 
The following equality is displayed in \cite[Lemma 7.3]{Dur2}:
\begin{equation}\label{eq:Mil}
 \mult_{p}(\overline{M_{\bC}(f)}, L^{\infty}_{\bC}) =d_p-1.
\end{equation}
We provide here an explicit proof of  \eqref{eq:Mil}.
 One may assume (by an adequate linear change of coordinates) that $p= [1:0:0]$,  and thus we have:
\[ f (x,y)= y^{d_{p}} r(x,y) + \lot \]
 where $r$ is a homogeneous polynomial of degree $ d-d_{p}$, and  not divisible by $y$.  In the chart $\{x \neq 0\}$,  the Milnor set $\overline{M_{\bC}(f)}$ has equation:
\begin{equation}\label{eq:ordery}
 \hat{h}(y,z) = - d_p y^{d_p-1} r(1,y) - y^{d_p} r_{y}(1,y)+ y^{d_p+1} r_{x}(1,y) + z q(1,y,z) =0 
\end{equation}
 where $r_x$ and $r_y $ denote the partial derivatives of $r$, and $q(x,y,z)$ is a homogeneous polynomial of degree  $d-1$.  By our assumption,  we also have $r(1,y)= c_0  + \cdots +c_ky^k $, where  $c_0 \neq 0$, and  $k\le d-d_p$.  
 One has by definition:
\[ \mult_p(\overline{M_{\bC}(f)}, L^\ity_{\bC}) = \ord\nolimits_{y}\left( \hat{h}(y,z)_{|L^\ity_{\bC}} \right) \]
 and due to \eqref{eq:ordery}, the later is precisely $d_p-1$.  
 \fin
\end{remark}
 



\section{The ``index gap'', and
upper bounds for the index at infinity}\label{upperbd}

Durfee showed in \cite{Dur2} the inequality:
\begin{equation}\label{eq:durfeeineq}
 \ind_{\infty}(f) \le 1 + d_{Re} -  2|\cL_{f}|.
\end{equation}
 We will improve  his upper bound by counting in a more refined manner the contributions
 of the Milnor branches at infinity.

Let  $p\in  L^{\ity}\cap \{f_{d} =0\}$, let $\overline{M_{\bC}(f)}\subset \bP^{2}_{\bC}$ be the projective closure  of the complex Milnor set $M_{\bC}(f)$.  We will consider  the germ $\overline{M_{\bC}(f)}_{p}$  and 
its  complex Milnor branches.
Our theorem uses the following sub-varieties of $\Cone\overline{M_{\bC}(f)}_{p}$, where the multiplicity of each line is taken  into account:

\smallskip

\noindent $\bullet$  $R_{p} := \bigl\{ L\in \Cone\overline{M_{\bC}(f)}_{p} \mid $ there is some real Milnor branch  tangent to $L$ at $p  \bigr\}$.

\noindent $\bullet$  $K_{p} :=\bigl\{ L\in \Cone\overline{M_{\bC}(f)}_{p}  \mid $ there is some complex non-real Milnor branch  tangent to $L$ at $p  \bigr\}$.

\noindent $\bullet$  $S_{p} :=\bigl\{ L\in \Cone\overline{M_{\bC}(f)}_{p}  \mid  $  $L\in R_{p}$ and either $L$ is tangent to some \emph{singular} real  branch of $\overline{M_{\bC}(f)}_{p}$, or $L$ is $ L^{\ity} \bigr\}$.

\smallskip

Note that $R_{p}^{\red} \cup S_{p}^{\red} \cup K_{p}^{\red}= \Cone\overline{M_{\bC}(f)}_{p}^{\red}$ with reduced structure, but that this union may be not disjoint.  

In order to state our index bound theorems, we denote by  $\lfloor \cdot \rfloor$ the \emph{non-negative} floor function, i.e. with the convention that if $\lfloor r \rfloor <0$, then we replace this value by $0$.

\begin{theorem}\label{t:upbound}
\begin{equation}\label{eq:upbound}
  \ind_{\infty}(f) \le 1 + d_{Re} -  2|\cL_{f}| - \sum_{p\in L^{\ity}\cap \{f_{d} =0\}} \left(\frac12 \Bigl \lfloor  \frac{\deg(R_{p}^{\red})-1}{2} \Bigr \rfloor + \deg(S_{p}) + \deg(K_{p}) \right).
\end{equation}
\end{theorem}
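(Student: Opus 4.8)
The plan is to reduce \eqref{eq:upbound} to a local ``index gap'' estimate at each point of $L^{\ity}\cap\{f_{d}=0\}$ and then to sum. For $p\in L^{\ity}$ set $\iota_{p}:=\sum_{c\in\bR}i_{p,c}$, the contribution to the index of the Milnor arcs at $p$ along which $f$ tends to a \emph{finite} value. By Lemma \ref{l:durfee1}, splitting the sum according to whether $c$ is finite or $c=\pm\ity$, and using that each cluster at $\pm\ity$ has total index $-\tfrac12$ while these clusters are in bijection with the connected components of $F_{+}\cup F_{-}$ (Corollary \ref{c:indexcluster} and the proof of Theorem \ref{t:main1}), we obtain
\[
\ind_{\infty}(f)=1+\sum_{p\in\cL_{f}}\iota_{p}-\tfrac12\Va(\pm\ity).
\]
Since Proposition \ref{p:comp-at-inf} gives $\Va(\pm\ity)\ge 2|\cL_{f}|$, this already yields $\ind_{\infty}(f)\le 1+\sum_{p\in\cL_{f}}\iota_{p}-|\cL_{f}|$. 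It therefore suffices to establish, for $p\in\cL_{f}$, the local bound
\[
\iota_{p}\le \deg(R_{p})-\deg(S_{p})-\tfrac12\Bigl\lfloor\tfrac{\deg(R_{p}^{\red})-1}{2}\Bigr\rfloor
\]
together with $\deg(R_{p})\le d_{p}-1-\deg(K_{p})$; summing over $\cL_{f}$, adding the $-|\cL_{f}|$ above, and using $\sum_{p\in\cL_{f}}d_{p}\le d_{Re}$ and, at each remaining point $p\in\{f_{d}=0\}\cap L^{\ity}\setminus\cL_{f}$ (where $R_{p}$ and $S_{p}$ are empty), the bound $\deg(K_{p})\le d_{p}-1$, then produces \eqref{eq:upbound}.

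The inequality $\deg(R_{p})\le d_{p}-1-\deg(K_{p})$ is the book-keeping step. By Remark \ref{r:durfeeeq} the intersection multiplicity $\mult_{p}(\overline{M_{\bC}(f)},L^{\ity}_{\bC})=d_{p}-1$, so the tangent cone $\Cone\overline{M_{\bC}(f)}_{p}$ has degree at most $d_{p}-1$, and it splits, with multiplicities, into the real directions recorded in $R_{p}$ and the non-real ones recorded in $K_{p}$; hence $\deg(R_{p})+\deg(K_{p})\le d_{p}-1$. The point is that only the real Milnor branches give rise to real Milnor arcs running to infinity at $p$, while the non-real branches counted by $K_{p}$ contribute none, which is exactly the $-\deg(K_{p})$ gap. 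For the estimate of $\iota_{p}$ in terms of the real data I would argue that, for a generic choice of origin, no Milnor arc has index $0$, so each arc contributes $\pm\tfrac12$; since a real branch through $p$ produces at most two half-branches at infinity, each real branch contributes at most $+1$ to $\iota_{p}$, whence $\iota_{p}\le\deg(R_{p})$. The refinement by $-\deg(S_{p})$ comes from the directions tangent to a \emph{singular} real branch or equal to $L^{\ity}$: such a branch contributes with multiplicity $\ge 2$ to $\deg(R_{p})$ but, by the tangency analysis of Proposition \ref{p:vantangency}, cannot be organised into a genuinely splitting configuration producing the corresponding number of $+\tfrac12$'s, so the surplus multiplicity recorded in $S_{p}$ must be subtracted.

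The last and hardest ingredient is the parity term $\tfrac12\lfloor(\deg(R_{p}^{\red})-1)/2\rfloor$, which I expect to be the main obstacle. Here I would order the Milnor arcs at $p$ cyclically (Definition \ref{d:orderarcs}) and invoke Proposition \ref{l:tangent-nontangent}: two consecutive arcs tangent to different real semi-lines cannot both lie in splitting clusters, while by Proposition \ref{p:vantangency} every splitting cluster is tangent to a single semi-line. Thus the $\deg(R_{p}^{\red})$ distinct real tangent directions cut the cyclically ordered arcs into consecutive blocks, and at each passage between blocks at least one potential index-$(+\tfrac12)$ cluster is lost; a careful count over the cyclic arrangement shows that the number of forced losses is at least $\lfloor(\deg(R_{p}^{\red})-1)/2\rfloor$, each lowering $\iota_{p}$ by $\tfrac12$. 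The delicate points throughout are the exact translation of the branch multiplicities in $R_{p}$, $S_{p}$, $K_{p}$ into counts of real arcs and their admissible indices, and making the cyclic block-transition count sharp enough to yield precisely the non-negative floor; once these are settled, the summation described in the first paragraph completes the proof.
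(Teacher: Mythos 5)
Your overall architecture coincides with the paper's: you use Lemma \ref{l:durfee1} together with Corollary \ref{c:indexcluster} and Proposition \ref{p:comp-at-inf} to absorb the clusters at $\pm\ity$ into the term $-|\cL_{f}|$, and you reduce everything to a local ``gap'' estimate at each point at infinity, summed via $\sum_{p} d_{p}\le d_{Re}$; indeed, your two local inequalities combine to exactly the paper's inequality \eqref{eq:localatp}. However, two steps are genuinely defective. First, your treatment of the points $p\in L^{\ity}\cap\{f_{d}=0\}\setminus\cL_{f}$ rests on the claim that $R_{p}$ and $S_{p}$ are empty there, and this is false: in the paper's own example $f=xy^{2}+x$ (see the remark following Definition \ref{d:dp}), the point $p=[1:0:0]$ has $d_{p}=2$ and $p\notin\cL_{f}$, yet $M(f)=\{y(2x^{2}-y^{2}-1)=0\}$ contains the real line $\{y=0\}$, which is a real Milnor branch through $p$ (along it $f=x\to\pm\ity$), so $R_{p}\neq\emptyset$; such branches at points outside $\cL_{f}$ can moreover be singular or tangent to $L^{\ity}$, so $S_{p}$ can be nonempty there too. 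Consequently your final bookkeeping, which covers the gap terms at these points by the single inequality $\deg(K_{p})\le d_{p}-1$, does not go through as written: you would need to prove that the full gap term $\frac12 \bigl\lfloor (\deg(R_{p}^{\red})-1)/2 \bigr\rfloor + \deg(S_{p}) + \deg(K_{p})$ is dominated by $d_{p}$ at every such point, which is not addressed.

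Second, the parity term, which you yourself identify as the crux, is asserted rather than proved: ``a careful count over the cyclic arrangement shows that the number of forced losses is at least $\lfloor(\deg(R_{p}^{\red})-1)/2\rfloor$'' is precisely the statement requiring an argument. The paper's count uses two devices absent from your sketch. (i) It separates the real tangent semi-lines according to the two sides of $L^{\ity}$, with counts $r_{p}$ and $s_{p}$ satisfying $r_{p}+s_{p}\ge\deg(R_{p}^{\red})$, applies Proposition \ref{l:tangent-nontangent} to consecutive arcs on each side to force at least $\lfloor r_{p}/2\rfloor+\lfloor s_{p}/2\rfloor$ arcs of non-positive index, and then uses $\lfloor r_{p}/2\rfloor+\lfloor s_{p}/2\rfloor\ge\bigl\lfloor(\deg(R_{p}^{\red})-1)/2\bigr\rfloor$; a count made directly on the full cyclic arrangement of directions, as you propose, does not obviously yield this floor. (ii) More importantly, the consecutive-arc argument necessarily involves all Milnor arcs at $p$, including those along which $f\to\pm\ity$; these arcs do not enter $\iota_{p}$ at all, so a forced non-positive index on such an arc says nothing about $\iota_{p}$. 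The paper repairs this with the ``compensating exchange'': each such arc carries a type (b) gap of $\frac12$ by Lemma \ref{l:d_{p}}, which exactly replaces the sign gap that must be discarded. Without this exchange your count fails, since nothing prevents all the forced sign losses from landing on the $\pm\ity$-arcs. These two points carry the actual mathematical content of Theorem \ref{t:upbound}, and neither is established in your proposal.
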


 The case $|\cL_{f}|=1$ is studied in detail in the next section. In order to prove Theorem \ref{t:upbound} we need the following key result.


\begin{lemma}[\textbf{The index gaps}]  \label{l:d_{p}}
 Let $p\in L^{\ity}\cap \{f_{d} =0\}$.
Then:
 \begin{equation}\label{eq:indabs}
 \sum_{c\in \bR}i_{p,c}  \le d_{p }-1.
 \end{equation}
 The following phenomena are producing the difference  between the two terms in the inequality  \eqref{eq:indabs}, to which we shall refer as ``\textbf{index gap}'':

 
\begin{enumerate}
 \rm \item \it A Milnor arc $\gamma$  at $p$ of index $i(\gamma) = - \frac12$,  yields a gap of at least $1$.
  
\rm \item \it A Milnor arc $\gamma$ at $p$ such that $\lim f_{|\gamma} = \pm \ity$, of any index,  yields a gap of at least $\frac12$.

\rm \item \it A Milnor branch $\beta$ at $p$ such that $\beta_{\bC}$ is tangent to  $L^
{\infty}_{\bC}$,  or that $\beta_{\bC}$ is singular at $p$,  yields a gap of at least $1$.  
 
 \rm \item \it A complex Milnor branch at $p$ that is not the complexified of a real Milnor branch produces a gap of at least $1$. If moreover this branch verifies the hypotheses of (c), then the gap increases to at least $2$.
 \end{enumerate}
 Moreover, the ``sign gaps'' (a), as well as the gaps (b),  cumulate with the ``singularity gaps''  (c).

\end{lemma}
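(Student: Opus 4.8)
The plan is to convert the inequality \eqref{eq:indabs} into an exact bookkeeping identity between the complex intersection multiplicity $\mult_p(\overline{M_\bC(f)}, L^\ity_\bC)$ and the indexed Milnor arcs at $p$, and then to read off the four gap phenomena as the non-negative defects in this identity. The starting point is Remark \ref{r:durfeeeq}, which gives $d_p-1 = \mult_p(\overline{M_\bC(f)}, L^\ity_\bC) = \sum_{\beta} \mult_p(\beta_\bC, L^\ity_\bC)$, the sum running over the complex branches $\beta_\bC$ of the germ $\overline{M_\bC(f)}_p$. As in the proof of Lemma \ref{l:durfee1}, I would choose the origin generically so that no Milnor arc has index $0$; thus every arc at $p$ has index $\pm\frac12$. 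I would first record the elementary local fact that each \emph{real} Milnor branch at $p$ (one whose complexification is a single conjugation-invariant complex branch carrying real arcs) contributes exactly two Milnor arcs, namely its two ends coming from a real parametrisation with parameter $\tau$, restricted to $\tau>0$ and to $\tau<0$, each of which runs to infinity monotonically in $\rho$ as in Proposition/Definition \ref{p:milnorarcs}. Writing $r_p$ for the number of real branches, the total number $N_p$ of Milnor arcs at $p$ then equals $2r_p$.

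The base inequality follows at once: since every finite-limit arc has index $\le\frac12$,
\[
\sum_{c\in\bR} i_{p,c} \ \le\ \tfrac12\,\#\{\,\text{arcs at }p\text{ with finite limit}\,\} \ \le\ \tfrac12 N_p = r_p \le d_p-1,
\]
the last step because $r_p$ is at most the number of complex branches, which is at most $\sum_\beta \mult_p(\beta_\bC, L^\ity_\bC) = d_p-1$. To locate the gap I would split the total defect as $G_p := (d_p-1) - \sum_{c\in\bR} i_{p,c} = A + B$, where $A := (d_p-1) - r_p \ge 0$ and $B := r_p - \sum_{c\in\bR} i_{p,c} \ge 0$; the whole point is that $A$ is a pure branch-count defect while $B$ is a pure arc-index defect, so that the two families of gaps are carried by independent non-negative summands and therefore cumulate.

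For the arc-index defect $B$, I would evaluate it arc by arc using $r_p = \tfrac12 N_p$: an arc of index $+\frac12$ with finite limit contributes $0$; an arc of index $-\frac12$ with finite limit contributes $1$, which is the sign gap (a); and any arc with $\lim f_{|\gamma}=\pm\ity$ contributes $\tfrac12$ \emph{irrespective of its index}, since it is counted in $N_p$ but excluded from $\sum_{c\in\bR}$, which is gap (b). For the branch-count defect I would expand $A = \sum_\beta\bigl(\mult_p(\beta_\bC, L^\ity_\bC) - \mathbf{1}_{\beta\ \mathrm{real}}\bigr)$ and use $\mult_p(\beta_\bC, L^\ity_\bC) \ge \mult_p(\beta_\bC) \ge 1$, the first inequality being strict exactly when $\beta_\bC$ is tangent to $L^\ity_\bC$ and the second exactly when $\beta_\bC$ is singular at $p$. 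A real branch that is smooth and transversal contributes $0$; a real branch singular at $p$ or tangent to $L^\ity_\bC$ contributes $\ge 1$, which is gap (c); a non-real branch contributes its full $\mult_p(\beta_\bC, L^\ity_\bC)\ge 1$, which is gap (d), and this rises to $\ge 2$ once the branch is in addition singular or tangent. Since $G_p = A+B$ with the per-arc contributions to $B$ and the per-branch contributions to $A$ independent and non-negative, all listed gaps add up, which is the final assertion.

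The main obstacle I anticipate is the precise local bookkeeping rather than any single hard estimate. One must justify that each real branch yields exactly two affine arcs, so that $N_p = 2r_p$ holds on the nose and not merely as an inequality; control the generic choice of origin so that index-$0$ arcs are genuinely absent; and pin down the two multiplicity inequalities together with the exact tangency and singularity criteria for their strictness. The delicate point is the correct matching of conjugation-invariant versus conjugate-pair branches to the real-arc count, and in particular the treatment of branches that are simultaneously non-real and tangent or singular, where the doubling in (d) must be accounted for without double counting.
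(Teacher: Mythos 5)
Your proposal is correct and takes essentially the same route as the paper: both rest on Remark \ref{r:durfeeeq} (the equality $\mult_{p}(\overline{M_{\bC}(f)}, L^{\infty}_{\bC}) = d_{p}-1$), on the fact that each real Milnor branch has a unique complexification and carries exactly two arcs of index $\pm\frac12$, and on the multiplicity bounds $\mult_{p}(\beta_{\bC}, L^{\infty}_{\bC})\ge 1$ for every complex branch, with $\ge 2$ exactly when the branch is singular at $p$ or tangent to $L^{\infty}_{\bC}$. Your explicit splitting of the total defect into a branch-count part $A$ and an arc-index part $B$ is just a tidier formalization of the paper's chain of inequalities \eqref{eq:abs}, with the same per-arc accounting of gaps (a)--(b) and per-branch accounting of gaps (c)--(d), and it makes the final cumulation claim transparent.
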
 

\begin{remark}\label{r:situat} 
Case (a) is illustrated by Example \ref{s:exa4}. Case (b) can be seen in Example \ref{s:exam2} in the cluster $\{ \gamma_8,\gamma_1,\gamma_2 \}$. In the same Example \ref{s:exam2}, the Milnor arcs $ \gamma_3$ and $\gamma_7$ have index $+\frac{1}{2}$ and are tangent to $L^\ity$, which means case (c). 

For case (d), let us consider the polynomial $f(x,y) =-\frac53 y^3+y^2-2y+ 4x^2+x$.  Then $p=[1:0:0] \in \overline{M_{\bC}(f)}$. The tangent cone $\Cone\overline{M_{\bC}(f)}_{p}$ is given by the equation $2z^2+6zy+5y^2 =0$, so there are at least two complex non-real branches at $p$.
\end{remark}

\begin{proof}[Proof of Lemma \ref {l:d_{p}}] 
For a fixed point $p\in \{f_{d}=0\}\cap L^{\ity}$, we have the inequalities:
\begin{equation}\label{eq:abs}
    \sum_{c\in \bR}i_{p,c} \le \sum_{c\in \bR}i_{p,c} + \sum_{c\in \{\pm \infty\}} |i_{p,c}| 
    \le  \sum_{c\in \bR\cup \{\pm \infty\}} |i_{p,c}| \le \mult_{p}(\overline{M_{\bC}(f)}, L^{\infty}_{\bC}) = d_{p}-1
\end{equation}
all of which may be strict. The first inequalities are obvious, whereas the last one is implied by the fact that there is a unique complex curve $\gamma_{\bC}$ which is the complexification of the Milnor branch  $\gamma$. The right hand side equality is \eqref{eq:Mil}.


 Each real Milnor branch at $p$ has two Milnor arcs, of indices $- \frac12$ or $+\frac12$.
The inequality \eqref{eq:indabs}  compares the indices of the \emph{real Milnor arcs} with the intersection multiplicities of the 
   corresponding complex Milnor branches.  The proof of the index gap cases goes as follows.
   
   \smallskip
   
 \noindent (a). A Milnor arc $\gamma$ at $p$ with  $i(\gamma) = -\frac12$ becomes $|i(\gamma)| = \frac12$ on the right side of \eqref{eq:abs}, which produces a gap of 1 in \eqref{eq:indabs}.


  \noindent (b). If $\gamma$ is a Milnor arc at $p$ such that $f_{|\gamma} \to \pm \ity$ then it does not exist in the sum of the left side of \eqref{eq:indabs}, whereas it contributes to the right side of \eqref{eq:abs} by $|i(\gamma)| = \frac12$.

 \noindent (c). A Milnor branch $\beta$ at $p$ such that its complexification $\beta_{\bC}$  is tangent to  $L^
{\infty}_{\bC}$  or singular at $p$ contributes by at most 1 in the left hand side of \eqref{eq:indabs}, whereas  the multiplicity $\mult_{p}(\beta_{\bC}, L^{\infty}_{\bC})$ contributes by at least 2 in the right hand side of \eqref{eq:indabs}.

\noindent (d). Any real Milnor branch has a unique complexification. However, not all local complex branches are complexifications  of real branches: there may be some purely complex Milnor branches, and all these count in the intersection index $\mult_p(\overline{M_{\bC}(f)}, L^\ity_{\bC})$, thus they give positive integer contributions in the right hand side of \eqref{eq:indabs} whereas they do not exist in the left hand side of \eqref{eq:indabs}. 

\end{proof}

\subsection{Proof of Theorem \ref{t:upbound}}
Lemma \ref{l:durfee1} reads:
\begin{equation}\label{eq:twosums}
 \ind_{\infty}(f) =1 + \sum_{p\in \cL_{f }, c\in \bR} i_{p,c} + \sum_{q\in L^{\ity}} i_{q,\infty}. 
\end{equation}
 
  By \eqref{eq:indabs}, for each $p\in L^{\ity}\cap \{f_{d} =0\}$, in particular for $p\in \cL_{f }$, we  have the inequality
 $\sum_{c\in \bR} i_{p,c} \le d_{p}-1$,
 for which we will evaluate the gaps studied in Lemma \ref{l:d_{p}}.
 
 To any complex non-real line $L\in \Cone\overline{M_{\bC}(f)}_{p}$ there corresponds a positive number of complex non-real branches having $L$ as tangent at $p$. 
 Each such complex non-real branch generates  a gap of at least 1. 
 
To any real line of $\Cone\overline{M_{\bC}(f)}_{p}$ there may correspond real and complex non-real tangent branches. 
The non-real branches contribute, by Lemma \ref{l:d_{p}}, with  gaps of at least $1$.  


A real tangent branch
$\beta$ can be either:
 
 \noindent (1).   non-singular at $p$ and  its two arcs are on both sides of $L^{\ity}$, or
 
 \noindent (2).  tangent to $L^{\ity}$  or singular at $p$, and thus yields a gap of at least 1, by Lemma \ref{l:d_{p}}(c).   According to Lemma \ref{l:d_{p}}(c), this gap cumulates with the sign gaps of type (a).
 
 \smallskip
 
 We compute a lower bound for the total gap at $p$.
 
 \smallskip
 
\noindent \textbf{The ``sign gaps'',  and the compensating exchange.} The counterclockwise ordering of the Milnor arcs (cf Definition \ref{d:orderarcs})   induces an ordering in the subset of arcs at $p$. In turn, this induces an ordering among the real semi-lines of the real tangent cone $\Cone \overline{M(f)_{p}}$ which are on the same side of the line at infinity. 

Let us assume that on the two sides of $L^{\ity}$ there are $r_{p}\ge 0$, respectively $s_{p}\ge 0$,  semi-lines with tangent real Milnor arcs, and therefore $r_{p} + s_{p} \ge \deg(R_{p}^{\red})$.  Applying Proposition \ref{l:tangent-nontangent} to the consecutive arcs on each side we obtain that there are at least $\lfloor  \frac{r_{p}}{2} \rfloor$ and $\lfloor  \frac{s_{p}}{2} \rfloor$  arcs with non-positive index, respectively. According to Lemma \ref{l:d_{p}}, each such arc produces a ``sign gap'' of at least $\frac12$.
 We thus obtain a total ``sign gap'' of at least $\frac12 (\lfloor  \frac{r_{p}}{2}\rfloor +   \lfloor  \frac{s_{p}}{2}  \rfloor )$, and notice that we have the inequality: $\lfloor  \frac{r_{p}}{2} \rfloor +   \lfloor  \frac{s_{p}}{2}  \rfloor \ge   \Bigl\lfloor  \frac{\deg(R_{p}^{\red})-1}{2} \Bigr \rfloor$.
 
Let us remark at this point that in the above computation, which starts with the ordered Milnor arcs at $p$,  to be able to apply Proposition \ref{l:tangent-nontangent} we ought to count all the Milnor arcs at $p$, thus not only the Milnor arcs for which $f$ tends to a finite value $c\in \bR$ but also the Milnor arcs\footnote{Such Milnor arcs  occur  in Example \ref{s:exam2}, see also Remark \ref{r:situat}.} for which $f$ tends to $\pm \infty$.   And since the later Milnor arcs do not occur in the sum of \eqref{eq:indabs}, we should remove them from the  above ``sign gap'' count. Nevertheless,  in the same time each of those contribute with a gap of type (b) of Lemma \ref{l:d_{p}}, and this gap of $\frac12$ is compensating 
 the necessary removal of the corresponding ``sign gap'' that has been counted as $\frac12$ too.
With this extra argument of compensating exchange,  the above estimation of the ``sign gap'' still holds.
 
 \medskip
 
 \noindent \textbf{The ``singular gaps'' and the ``complex gaps''.}
The Milnor branches which are tangent at $p$ to the lines of the sub-variety $S_{p}$ correspond to case (c) of Lemma \ref{l:d_{p}} and each one produces a gap of at least 1 which adds up to the total sign gap.  The same effect produces a complex non-real branch (by Lemma \ref{l:d_{p}}(d)), while its tangent line at $p$ belongs to $K_{p}$. In both cases, the lines are considered with their multiple structure.

By adding up those gaps,  we thus get the total gap at $p$ of at least $\frac12  \Bigl \lfloor  \frac{\deg(R_{p}^{\red})-1}{2} \Bigr \rfloor + \deg(S_{p})+ \deg(K_{p})$, i.e. the  inequality:

\begin{equation}\label{eq:localatp}
 \sum_{p\in \cL_{f }, c\in \bR} i_{p,c} \le d_{p}-1  -  \frac12  \Bigl \lfloor  \frac{\deg(R_{p}^{\red})-1}{2} \Bigr \rfloor - \deg(S_{p})- \deg(K_{p}) .
\end{equation}
 Finally we need to sum this up over all points $p\in \cL_{f }$.  Therefore we get from \eqref{eq:re} the following inequality:
\begin{equation}\label{eq:dRe}
  \sum_{p\in \cL_{f }} d_{p}-1 \le d_{Re} - |\cL_{f }|.
\end{equation}

 We have now to deal with the term $\sum_{q\in L^{\ity}} i_{q,\infty}$ in \eqref{eq:twosums}.
 By Remark \ref{r:component}, to each fibre component counted by $\Va(\pm \infty)$ there corresponds injectively a cluster $\cC = \cC(\pm \infty)$, and for this cluster the sum of indices $\sum_{\gamma \in \cC}  i(\gamma)$ is $-1/2$ by Corollary \ref{c:indexcluster}. By Proposition \ref{p:comp-at-inf}, we have $\Va(\pm \infty) \ge  2|\cL_{f}|$. We thus obtain: 
\begin{equation}\label{eq:ellminus}
 \sum_{q\in L^{\ity}} i_{q,\infty}= \sum_{\cC}\sum_{\gamma \in \cC}  i(\gamma) \le -\frac12 2|\cL_{f}| = -|\cL_{f}| 
\end{equation}
 where the first sum at the right hand side  is taken over all clusters $\cC =\cC(\pm \infty)$. 
 
 Finally,  plugging \eqref{eq:localatp}, \eqref{eq:dRe} and \eqref{eq:ellminus}  into \eqref{eq:twosums}, we obtain the claimed inequality \eqref{eq:upbound}.
 \fin
 
 \medskip
 
  We show how to compute an upper bound for the total gap in a different manner, by merging the set of singular Milnor branches (which projects onto the subset  $S_{p}^{\red}$ of $R_{p}^{\red}$) into the total set of Milnor branches (which projects onto the set  $R_{p}^{\red}$). This produces a more handy upper bound for the index at infinity, in particular formula \eqref{eq:upbound3} is in terms of the local degrees at infinity.
 
\begin{corollary}\label{c:upbound2}
 \begin{equation}\label{eq:upbound2}
   \ind_{\infty}(f) \le 1 + d_{Re} -  2|\cL_{f}| - \sum_{p\in L^{\ity}\cap \{f_{d} =0\}} \left( \Bigl \lfloor  \frac{\deg(R_{p}^{\red}) }{2} \Bigr \rfloor + \deg(K_{p}) \right).
\end{equation}
In particular:
 \begin{equation}\label{eq:upbound3}
   \ind_{\infty}(f) \le 1 + d_{Re} -  2|\cL_{f}| - \sum_{p\in L^{\ity}\cap \{f_{d} =0\}}  \Bigl \lfloor  \frac{\delta_{p}}{2} \Bigr \rfloor ,
   \end{equation}   
   where $\delta_{p} := \deg \Cone\overline{M_{\bC}(f)}_{p}^{\red_{\bR}}$ is the degree of the cone in which the real line components are taken with reduced structure.  
\end{corollary}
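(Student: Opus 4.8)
The plan is to derive both inequalities from the per-point gap analysis already carried out in the proof of Theorem~\ref{t:upbound}, merely reorganising the bookkeeping at each $p\in L^{\ity}\cap\{f_{d}=0\}$. Recall that there the gap $d_{p}-1-\sum_{c\in\bR}i_{p,c}$ was split into a ``complex/multiplicity'' part, contributing at least $\deg(K_{p})$ from the purely complex branches, and a ``sign'' part estimated through the two-sided alternation furnished by Proposition~\ref{l:tangent-nontangent}. To prove \eqref{eq:upbound2} I would leave the complex contribution $\deg(K_{p})$ untouched but \emph{merge} the singular real branches (those projecting onto $S_{p}^{\red}$) back into the full family of real tangent lines $R_{p}^{\red}$, running the alternation over all of them simultaneously. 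The bound \eqref{eq:upbound3} would then follow from \eqref{eq:upbound2} by a purely arithmetic floor estimate. I stress that \eqref{eq:upbound2} is \emph{not} a formal consequence of \eqref{eq:upbound} (the two subtracted quantities are incomparable, as one sees already for $\deg(R_{p}^{\red})=4$, $\deg(S_{p})=0$), so a genuine re-derivation of the gap at $p$ is required.

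For the core estimate, fix $p$ and set $n:=\deg(R_{p}^{\red})$. First I would treat the generic situation in which every real Milnor branch at $p$ is smooth and transverse to $L^{\ity}$: such a branch crosses $L^{\ity}$ at $p$ and hence has one arc on each side of $L^{\ity}$, tangent to the two opposite semi-lines of its tangent line. Consequently each of the $n$ distinct real tangent lines contributes one semi-line to \emph{each} side, so in the notation of the proof of Theorem~\ref{t:upbound} one has $r_{p}=s_{p}=n$. Feeding this into the \emph{same} sign-gap count used there, namely $\frac12\bigl(\lfloor r_{p}/2\rfloor+\lfloor s_{p}/2\rfloor\bigr)$, now yields a sign gap of at least $\frac12\bigl(\lfloor n/2\rfloor+\lfloor n/2\rfloor\bigr)=\lfloor n/2\rfloor$; the point is precisely that Theorem~\ref{t:upbound} discarded this information by only invoking the weaker bound $r_{p}+s_{p}\ge n$. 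Adding the untouched complex gap $\deg(K_{p})$ gives the per-point estimate $\sum_{c\in\bR}i_{p,c}\le d_{p}-1-\bigl\lfloor n/2\bigr\rfloor-\deg(K_{p})$, which after summing over all $p$ exactly as in the proof of Theorem~\ref{t:upbound} (using \eqref{eq:re} and the treatment of the clusters at $\pm\infty$) is precisely \eqref{eq:upbound2}.

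The hard part will be the non-generic case, where a real branch may be singular at $p$ or tangent to $L^{\ity}$; then its two arcs can lie on the \emph{same} side of $L^{\ity}$, so the corresponding line no longer contributes a semi-line to both sides and the equality $r_{p}=s_{p}=n$ breaks down. This is exactly where the ``merging'' is needed: such a branch lowers the combined semi-line count $r_{p}+s_{p}$ below $2n$, but by Lemma~\ref{l:d_{p}}(c) it simultaneously carries an excess-multiplicity gap of at least $1$, whereas each semi-line lost on the deficient side costs at most $\frac12$ in the sign gap. I would therefore show, line by line, that these singularity gaps over-compensate the missing semi-lines, so that the total still dominates $\lfloor n/2\rfloor+\deg(K_{p})$. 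The delicate bookkeeping is to carry out this compensation while ensuring no double counting against the complex contribution $\deg(K_{p})$; this is the main obstacle in the argument.

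Finally, \eqref{eq:upbound3} is deduced from \eqref{eq:upbound2}. Every real line of $\Cone\overline{M_{\bC}(f)}_{p}$ is the tangent of some Milnor branch, which is either real, placing the line in $R_{p}^{\red}$, or complex non-real, in which case (by conjugation) the line carries a complex non-real branch and so lies in $K_{p}$; and every complex non-real line lies in $K_{p}$ with its multiplicity. Hence $\delta_{p}=\deg\Cone\overline{M_{\bC}(f)}_{p}^{\red_{\bR}}\le \deg(R_{p}^{\red})+\deg(K_{p})$, and then
\begin{align*}
\Bigl\lfloor \tfrac{\delta_{p}}{2}\Bigr\rfloor
&\le \Bigl\lfloor \tfrac{\deg(R_{p}^{\red})+\deg(K_{p})}{2}\Bigr\rfloor \\
&\le \Bigl\lfloor \tfrac{\deg(R_{p}^{\red})}{2}\Bigr\rfloor+\Bigl\lceil \tfrac{\deg(K_{p})}{2}\Bigr\rceil
\le \Bigl\lfloor \tfrac{\deg(R_{p}^{\red})}{2}\Bigr\rfloor+\deg(K_{p}),
\end{align*}
so the quantity subtracted in \eqref{eq:upbound3} is at most the one subtracted in \eqref{eq:upbound2}, which yields \eqref{eq:upbound3}.
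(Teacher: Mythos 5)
Your proposal is correct and takes essentially the same route as the paper: the paper's proof also starts from the extreme case in which every line of $R_{p}^{\red}$ carries a two-sided real branch, so that $r_{p}=s_{p}=\deg(R_{p}^{\red})$ and the sign gap is at least $\bigl\lfloor \deg(R_{p}^{\red})/2\bigr\rfloor$, and it deduces \eqref{eq:upbound3} from \eqref{eq:upbound2} by the same inequality $\delta_{p}\le \deg(R_{p}^{\red})+\deg(K_{p})$. The compensation step you defer as the ``main obstacle'' is exactly what the paper settles by an exchange argument: replacing one two-sided branch by a one-sided branch (necessarily singular or tangent to $L^{\ity}$, hence carrying a gap of at least $1$ by Lemma \ref{l:d_{p}}(c)) lowers the sign gap by at most $1$, so the total gap never decreases, and any configuration of real Milnor arcs is reached after finitely many such exchanges; your per-branch estimates ($\ge 1$ gained versus $\le \frac12$ per lost semi-line) are precisely what makes each exchange work, and the double counting you worry about cannot occur since the complex non-real branches are disjoint from the real ones and their gaps cumulate.
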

\begin{proof}

 For some fixed point $p\in L^{\ity}\cap \{f_{d} =0\}$, we first consider the extreme case where each $L\in R_{p}$ has some tangent real Milnor branch with arcs on both sides of the line at infinity. We thus have
  $r_{p}= s_{p}=\deg(R_{p}^{\red})$ and, by applying Proposition \ref{l:tangent-nontangent} as in the proof of Theorem \ref{t:upbound}, we get the total sign gap greater or equal to  $\Bigl\lfloor  \frac{\deg(R_{p}^{\red})}{2}  \Bigr\rfloor$. 
   
   Next we operate the following change: choose one of the Milnor branches with arcs on both sides of $L^{\ity}$ and replace it by a Milnor branch with arcs on the same side of $L^{\ity}$. Then this new branch is necessarily singular or tangent to $L^{\ity}$, thus in case (c) of Lemma \ref{l:d_{p}}  and therefore, besides its possible sign gap contribution, it gives a singular gap contribution of at least 1. Loosing one arc at one side of $L^{\ity}$
   may diminish the total gap by at most 1.
   Consequently, the effect of this replacement is that the total gap does not diminish.
   
   Finally, we observe that one may obtain any configuration of real Milnor arcs by repeating a finite number of 
   times the above described operation on Milnor branches.    
   Our first statement is proved.
   
   The second statement is a simple consequence of the inequality $ \Bigl \lfloor  \frac{\delta_{p}}{2} \Bigr \rfloor \le 
     \Bigl \lfloor  \frac{\deg(R_{p}^{\red}) }{2} \Bigr \rfloor + \deg(K_{p}) $, which follows since $\deg(R_{p}^{\red}) + \deg(K_{p}) $ is (by definition)   greater or equal to $\deg \Cone\overline{M_{\bC}(f)}_{p}^{\red_{\bR}}$.
      \end{proof}



\smallskip


\subsection{Revisiting Durfee's upper bound}\label{s:durfee}
Durfee  \cite{Dur2} has proved an upper bound \eqref{eq:durfee0} in terms of the degree $d$ only,  namely
  $\ind_\ity f \le \max\{1,  d-3\}$.    
    Revisiting and completing Durfee's proof in \cite{Dur2}, we show here the following slight improvement:
\begin{theorem}\label{t:l=1}
 Let $f: \bR^{2}\to \bR$ be a polynomial of degree $d\ge 2$ with isolated singularities.
 Then:
\begin{enumerate}
\rm \item \it  If $|\cL_{f}| \ge 2$, then $\ind_{\infty}(f) \le d-3$.
\rm \item \it  If $|\cL_{f}| =1$, then $\ind_{\infty}(f) \le d-3$ for $d \ge 4$, and $\ind_{\infty}(f) \le 0$ for $d\le 3$.
\rm \item \it  If $|\cL_{f}| =0$ then $\ind_{\infty}(f)=1$.
\end{enumerate}
\end{theorem}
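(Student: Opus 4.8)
The plan is to treat the three cases separately; (c) and (a) are short, while $|\cL_{f}|=1$ carries all the difficulty. For (c): if $|\cL_{f}|=0$ then no fibre meets $L^{\ity}$, so every non-empty fibre is compact; Lemma \ref{l:circle}, applied to a fibre over $I_{+}$ or $I_{-}$, then forces all non-empty fibres to be compact and connected, hence homeomorphic to circles, and the Gauss map $\grad f/\|\grad f\|$ winds exactly once around a large circle $C$, giving $\ind_{\infty}(f)=1$. For (a): I would feed $|\cL_{f}|\ge 2$ and the obvious inequality $d_{Re}\le d$ into \eqref{eq:upbound3}; since every floor term is non-negative, $\ind_{\infty}(f)\le 1+d_{Re}-2|\cL_{f}|\le 1+d-4=d-3$.

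For (b) I would normalise the single point at infinity to $p=[1:0:0]$ and write $d_{p}$ for the order of $f_{d}$ at $p$. Two successive bounds organise the proof. First, reading \eqref{eq:twosums} with $\cL_{f}=\{p\}$ and combining $\sum_{c\in\bR}i_{p,c}\le d_{p}-1$ (Lemma \ref{l:d_{p}}) with $\sum_{q\in L^{\ity}}i_{q,\infty}\le -1$ (from \eqref{eq:ellminus}, since $\Va(\pm\ity)\ge 2$ by Proposition \ref{p:comp-at-inf}), one gets $\ind_{\infty}(f)\le d_{p}-1$, which already settles $d_{p}\le d-2$. Second, when $d_{p}\ge d-1$ the only other possible point of $\{f_{d}=0\}$ is a simple root, at which $\overline{M_{\bC}(f)}$ misses $L^{\ity}$ by \eqref{eq:Mil}; hence all $a:=\Va(\pm\ity)$ vanishing clusters lie at $p$. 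Then \eqref{eq:abs} improves the estimate to $\sum_{c\in\bR}i_{p,c}\le d_{p}-1-\tfrac12 a$, while $\sum_{q\in L^{\ity}}i_{q,\infty}=-\tfrac12 a$, so that $\ind_{\infty}(f)\le d_{p}-a\le d_{p}-2$. This settles $d_{p}=d-1$ for every $d$, and also the pure-power case in degree $d=2$.

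Everything thus reduces to $f_{d}=c\,\ell^{d}$ with $d_{p}=d\ge 3$, where the above only yields $\ind_{\infty}(f)\le d-2$ and one extra unit of gap is still required; I expect this to be the main obstacle, and it is precisely the ``shadow point'' in Durfee's proof. If $\Va(\pm\ity)\ge 3$ the bound $\ind_{\infty}(f)\le d_{p}-a$ already gives $d-3$, so I may assume $\Va(\pm\ity)=2$. The missing unit should then come from the local geometry at $p$ through \eqref{eq:upbound} and Corollary \ref{c:upbound2}: using the explicit Milnor equation $\hat h=-dc\,y^{d-1}+z\,q(1,y,z)$ (the computation of Remark \ref{r:durfeeeq} with $r\equiv c$), I would argue that the tangent cone $\Cone\overline{M_{\bC}(f)}_{p}$ contributes a gap of at least $2$ — either $L^{\ity}$ belongs to it (feeding $\deg(S_{p})$ or $\deg(K_{p})$), or it carries a singular or multiple real branch (feeding $\deg(S_{p})$), or it has enough distinct real lines to make $\tfrac12\lfloor(\deg(R_{p}^{\red})-1)/2\rfloor$ large. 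The one genuinely delicate configuration is a tangent cone of distinct real lines without singular branches, where \eqref{eq:upbound} is not sharp; for it I would revert to Durfee's resolution at infinity and count the horizontal divisors over $p$ to produce the extra vanishing component. The low-degree assertion $\ind_{\infty}(f)\le 0$ for $d\le 3$ then follows by enumerating the finitely many real forms $f_{d}$ with a single real point at infinity and applying these reductions, the only substantive case being $f_{d}=c\,\ell^{3}$, handled exactly as the crux.
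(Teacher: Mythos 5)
Parts (a) and (c) of your proposal are correct and coincide with the paper's proof, as does your first bound $\ind_{\ity}(f)\le d_{p}-1$ (which settles $d_{p}\le d-2$). The genuine gap is in your second bound. You claim that when $d_{p}\ge d-1$ all vanishing clusters at $\pm\ity$ have their Milnor arcs at $p$, because $\overline{M_{\bC}(f)}$ misses the remaining simple root of $f_{d}$ by \eqref{eq:Mil}. But \eqref{eq:Mil} only governs the points of $\overline{M_{\bC}(f)}\cap L^{\ity}_{\bC}$ that lie in $\{f_{d}=0\}$: Milnor arcs along which $f\to\pm\ity$ can perfectly well tend to points of $L^{\ity}$ where $f_{d}\neq 0$, since the directions at infinity of $M(f)$ are zeros of $y(f_{d})_{x}-x(f_{d})_{y}$, which for $f_{d}=y^{d}$ include $[0:1:0]\notin\{f_{d}=0\}$. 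The paper's own Example \ref{s:exam2} ($f_{6}=y^{6}$, $|\cL_{f}|=1$, $d_{p}=d$) realises exactly this: the vanishing cluster $\{\gamma_{4},\gamma_{5},\gamma_{6}\}$ at $+\ity$ has two arcs tending to $[0:1:0]$, and there $\sum_{c\in\{\pm\ity\}}|i_{p,c}|=0$, not $\frac12\Va(\pm\ity)=1$; see also the remark following Corollary \ref{c:point}, which warns that clusters at $\pm\ity$ do not have a single point at infinity. Without that claim, \eqref{eq:abs} together with \eqref{eq:ellminus} yields only $\ind_{\ity}(f)\le d_{p}-\frac12\Va(\pm\ity)\le d_{p}-1$, so your inequality $\ind_{\ity}(f)\le d_{p}-a$ is unproven. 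This breaks both places where you use it: the case $d_{p}=d-1$, which the paper settles by an entirely different mechanism (Lemma \ref{l:durfee3} shows that $d_{p}=d-1$ forces a second point $[0:1:0]\in\cL_{f}$, so this case cannot occur at all when $|\cL_{f}|=1$), and your reduction of the crux $d_{p}=d$ to $\Va(\pm\ity)=2$.

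Even granting that reduction, your treatment of the crux $f_{d}=c\,\ell^{d}$ is a plan rather than a proof, and it leans on the wrong tools. The paper's Lemma \ref{l:separate} gives the true dichotomy: either some Milnor branch at $p$ is tangent to $L^{\ity}$, or $f=y^{d}+v(x)+u(y)$ with $\deg v\le 2$; in the latter, separated-variable case there is no extra gap hiding in the tangent cone, and the paper instead computes the index directly (Lemma \ref{l:indexspec}). In the tangent case, the two configurations that actually require work --- a single tangent branch $\beta$ with $\mult_{p}(\beta,L^{\ity}_{\bC})=2$ whose two arcs both have index $+\frac12$, and the configuration in which every Milnor arc at $p$ has index $+\frac12$ --- cannot be eliminated by any gap count based on \eqref{eq:upbound}; the paper rules them out as impossible via the tangency statements, Propositions \ref{p:tangency} and \ref{l:tangent-nontangent}. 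Your fallback for the ``delicate configuration'' (reverting to a resolution at infinity to ``produce the extra vanishing component'') is precisely the kind of unestablished step that this paper identifies as the shadow point in Durfee's argument, so as written part (b) is not proved.
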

\begin{proof}
(a). Follows immediately, either from  \eqref{eq:durfeeineq} or from \eqref{eq:upbound}.

\noindent (c). By Lemma \ref{l:circle}, the set $\{| f(x,y)| =R\}$ for $R\gg 1$ is diffeomorphic to a circle, and the winding number over a circle is $+1$, thus $\ind_{\infty}(f)=1$. This trivial fact was also observed in  \cite[Theorem 7.8]{Dur2}.

\noindent (b). Let $\cL_{f} = \{p\}$. Then,  by a linear change of coordinates, we may assume that $p=[1:0:0]$.
For $|\cL_{f}| =1$, by Theorem \ref{t:main1} and by Durfee's inequality \eqref{eq:durfeeineq}, or by our improvement \eqref{eq:upbound3}, we get:
\begin{equation}\label{eq:main1}
  \ind_{\infty}(f)  =  1+ \frac12 \left( \sum_{ \lambda\in \bR}\Sp(p, \lambda) -  \sum_{\lambda\in \bR\cup \{\pm \infty\}} \Va(p, \lambda)  \right) \le d_{p} - 1  
\end{equation}
If $d_{p} \le  d-2$ then Theorem \ref{t:l=1} follows directly from \eqref{eq:main1}. 
We consider in the following the  two remaining cases: $d_{p} = d-1$ and $d_{p} = d$.

\

\noindent
\frame{Case 1.} 
$d_{p} = d-1$.  
\begin{remark}\label{r:durfeeerror2}
 Durfee claims in \cite[pag. 1359]{Dur2}, this case is not possible. His argument is the following:  ``the roots of $f_d$ other than $p$ are complex, and hence occur in conjugate pairs, thus $d_{p}\le d-2$''. This seems to be grounded on the same assertion discussed in Remark \ref{r:durfeeerror1}: 
``$\cL_{f}= \emptyset \implies d_{Re}=0$'', which
 is false, as shown by the simple example $f=x^{4} +y^{2}$,  whereas its converse is obviously true.

We think that a slightly different assertion could be nevertheless true:\\ 

\noindent
\textbf{Conjecture}: 
$d_{p} = d_{Re}-1 >0\implies \cL_{f}\not= \emptyset,  \mbox{ and if moreover } p\in \cL_{f} \mbox{ then } |\cL_{f}| =2.$
 
 \
 
 We will prove by the next lemma a version of this conjecture with $d_{Re}$ replaced by $d$.  
\end{remark}
\begin{lemma}\label{l:durfee3}
 If $d_{p} = d-1 >0$  then  $\cL_{f}\not= \emptyset$,   and if moreover  $p\in \cL_{f}$  then  $|\cL_{f}| =2$.
\end{lemma}
\begin{proof}
  If $d_{p} = d-1 >0$  then one may assume, by an appropriate linear change of variables,
 that $p = [1:0:0]$ and $f_{d}=xy^{d-1}$. One therefore has: 
\[
f(x,y)= xy^{d-1}+ xh(x,y) +  u(y)
\] 
with $\deg h(x,y) < d-2$  and $\deg u(y)\le d-1$. 

We will show that the projective closure $\overline{f^{-1}(0)}$ contains the point $[0:1:0] \in L^{\ity}$, which is different from $p$.  We may assume that  $u\not \equiv 0$, since if not, then  $\{x=0\} \subset f^{-1}(0)$, thus  $[0:1:0]\in \cL_{f}$ and the claim is proved. Let then $b\in \bR^{*}$ be the leading coefficient of $u$.

For every fixed $x_0 \in \bR^{*}$, $x_{0} \not= -b$, the sign of the polynomial $f(x_0,y)$ of variable $y$, for $y\gg 1$, is the sign of its leading coefficient; this is $x_0+b$ in case $\deg u=d-1$,  and it is  $x_0$ in case $\deg u < d-1$.  Let us choose $x_0 \in \bR^*$ such that $b (x_0 +b)<0$, and thus $b x_0<0$ too. Then $f(x_0,y)f(0,y)<0$ for any $y\gg 1$.  This implies that for any $y \gg 1$ there exists some value $t_{y}$ bounded between $0$ and $x_{0}$ such that $(t_{y}, y)\in  f^{-1}(0)$. By taking the limit $y\to \ity$, this shows that $[0:1:0]\in \overline{f^{-1}(0)} \cap L^{\ity}$.  
\end{proof}

 Lemma \ref{l:durfee3} shows that the case $d= d_{p} - 1$, with $p\in \cL_{f}$ and $|\cL_{f}|=1$ is impossible, confirming Durfee's claim.

\medskip
\noindent
\frame{Case 2.}  $d_{p} = d$.

We may assume as above that $p=[1:0:0]$ and since $d_{p} = d$, one may also assume that, modulo some appropriate linear change of coordinates, one has $f_{d} =y^{d}$.


\begin{lemma}\label{l:separate}
Let $p= [1:0:0] \in \cL_{f}$ and let  $f_{d} =  y^{d}$.
Then either there exists a Milnor branch at $p$ which is tangent
to  $L^{\infty}$, or 
 $f(x,y) = y^{d} + v(x) + u(y)$, where $\deg v \le 2 <d$ and $\deg u \le d-1$.
\end{lemma}
\begin{proof}
Let us assume that  $f$  contains mixed terms, namely let $f = y^{d}+ xyq(x,y) +v(x) + u(y)$, where $q \not\equiv 0$ is a polynomial of degree $\le d-3$, and
$v(x)$ and  $u(y)$ are some polynomials of degrees $\le d-1$.
We write explicitly the equation  $\hat h(1,y,z) =0$ of the closure $\overline{M(f)}$ of  the Milnor set in the chart $\{x=1\}$. This is a polynomial of degree at least $d-1$ because it contains the term  $dy^{d-1}$. Its order at $(0,0)$ is $\ord \hat h(1,y,z) \le \ord  z\hat q(1,y,z) + zy\hat q(1,y,z) \le d-2$, where $\hat q(x,y,z)$
denotes the homogenization of $q(x,y)$  of degree $d-3$ by the variable $z$. Thus all the terms of $\hat h(1,y,z)$ of degree $< d-1$ contain $z$. This implies that $\overline{M(f)}_{p}$
 contains a (complex) branch which is tangent to the line at infinity $L^{\infty} = \{z=0\}$.

Let us now treat the complementary case, i.e. whenever $f$ contains no mixed terms, i.e. $f= y^{d} +v(x) + u(y)$, where $v(x)$ and  $u(y)$ are some polynomials of degrees $\le d-1$. Then the Milnor set germ $\overline{M(f)}_{p}$ in the chart $\{x =1\}$  is defined by the equation: 
\begin{equation}\label{e:Milg+u}
\hat{h}(1,y,z)= - d  y^{d-1} +  zy\hat{v}_x(1,z)  - z \hat{u}_y(y,z),
\end{equation}
where $\hat{v}_x(x,z)$ and $\hat{u}_y(y,z)$ denote the homogenization of degree $d-2$ of the derivatives $v_x$ and $u_y$.
The 1st and the 3rd terms of \eqref{e:Milg+u} are homogeneous of degree $d-1$, while the term in the middle is of order $\le d-2$ if and only if 
$\deg v \ge 3$. We deduce that the tangent cone $\Cone _{p}\overline{M(f)}$ contains the line $\{z=0\}$ if and only if  $\deg v \ge 3$. 
\end{proof}

We compute the index in the special case of Lemma \ref{l:separate}.
\begin{lemma}\label{l:indexspec}
Let $f=v(x) + u(y)$, where $\deg u =\deg f =d$, and $\deg v \le 2< d$ such that $v_{x} \not\equiv 0$. 
If $d \geq 3$ then $|\ind _\ity(f)|$ is $0$ or $1$.
\end{lemma}
\begin{proof}
If $\deg v \le 1$ then $\Sing f = \emptyset$ and therefore $\ind _\ity(f) =0$.

If $\deg v =2$, then the derivative $v_{x} = ax+b$, where $a\not= 0$ by our assumption, 
 changes sign one time, precisely at $x = -b/a$.
 Consider a large enough circle $C\subset \bR^{2}$ centred at the origin.  Consider the two points $N, S \in C\cap \{x = -b/a\}$.
 In the following we will compute the index at infinity 
 $\ind_{\ity} (f)$ as the winding number over $C$. 
  
 If $d \geq 3$ then $u_{y}$ is a polynomial of degree $d-1$ and therefore has a constant sign outside a compact 
subset of $\bR$. This implies that on each half circle of $C$  cut out by the vertical line $\{x = -b/a\}$, the variation of 
the vector field $\grad f$ over the circle $C$ between the two points $N$ and $S$ is either zero, or $\pi$ or $-\pi$ 
\end{proof} 
We continue the proof of Case 2.  By Lemma \ref{l:separate} and by Lemma \ref{l:indexspec}, if there is  no Milnor branch tangent to $L^{\infty}$ at $p$, then we obtain $\ind_{\infty}(f) =0$ when $d =3$, and $\ind_{\infty}(f) \le d-3$ when $d>3$, hence Theorem \ref{t:l=1} is proved in this situation.

In what follows we focus on the last remaining case established by Lemma \ref{l:separate}, namely:
 \emph{there exists at least one (complex) Milnor branch $\beta$ tangent to   $L^{\infty}$ at $p$.}
The study falls into the following 4 situations: 

\

\noindent  (i).
There are at least two Milnor branches at $p$ which are tangent to $L^{\ity}$, then by Lemma \ref{l:d_{p}}(c) we get a gap of at least 1 for each of these branches. Therefore $\ind_{\infty}(f) \le d_{p}-1-2=  d_{p} - 3$.

\

\noindent (ii). There is a single Milnor branch $\beta$ tangent to $L^{\ity}$ at $p$,  and such that $\mult_{p}(\beta, L^{\infty}_{\bC}) \ge 2$. By \eqref{eq:Mil}, this implies $d_{p}\ge 3$.

 If $\mult_{p}(\beta, L^{\infty}_{\bC}) >2$ then we have a gap of at least 2, and thus $\ind_{\infty}(f) \le d_{p}-3$.
 If $\mult_{p}(\beta, L^{\infty}_{\bC}) = 2$  and the indices of the arcs of $\beta$ are not both $+\frac12$, then we get a gap of at least $3/2$. Since the index is an integer, the gap is of  at least 2, and therefore $\ind_{\infty}(f) \le d_{p}-3$ again.

Last case, let $\mult_{p}(\beta, L^{\infty}_{\bC}) = 2$ and such that both arcs of $\beta$ have  index $+\frac12$.
Since  $\beta$ is also tangent to $L^{\ity}$,  it follows that $\beta$ is nonsingular at $p$, more precisely the germ of $\beta$ at $p$ is equivalent, after some linear change of coordinates, with the curve $z=y^{2}$. Thus the two Milnor arcs are in the same half-plane of the chart $\bR^{2}$ cut by the line $z=0$.  According to Proposition \ref{p:tangency}, after each of the two splittings we obtain two components of the fibres of $f$ which are tangent to the line $L := \{z=0\}$, and moreover,  along the two Milnor arcs the tangency is to different  semi-lines, say  $L_{+}$ and $L_{-}$. This implies that, in the absence of 
other splittings or vanishings at $p$, there should exist a trivial fibration connecting two components tangent to different 
semi-lines, which is treated by Proposition \ref{l:tangent-nontangent}, and which tells that this situation is impossible.

\

\noindent  (iii). There is a single Milnor branch  tangent to $L^{\ity}$,
and there are also non-tangent Milnor branches at $p$, such that at least one of the Milnor arcs has index $-\frac12 $.
Then by 
Lemma \ref{l:d_{p}}(a) and (c) we obtain an index gap of at least 2. Therefore we get $\ind_{\infty}(f) \le d_{p}-3$.  
 
 \
 
\noindent  (iv).  There is a single tangent Milnor branch  at $p$, there are one or more transversal Milnor branches,  and such that all the Milnor arcs at $p$ have index $+\frac12$. This means that all arcs are of splitting type, and in particular each arc is a cluster. Our Proposition \ref {l:tangent-nontangent} shows that this situation is impossible.
 \end{proof}



\section{Examples}\label{examples}
\sloppy

We consider here four examples. For three of them we will use pictures to encode information, and in order to draw the frame we  will use the following construction employed  in \cite{Duall}.
 Let  $\bR^2 \hookrightarrow \bP^2 \simeq  \bR^3\m \{0\} / \bR^{*}$ be the embedding defined by $(x,y)\mapsto [x:y:1]$, and let
\[S := \{ (a,b,0) \in \bR^3\m \{0\} \}/ \bR_{+} ,\] 
be the  circle which is a double covering of the line at infinity $L^\infty \subset \bP^2$.
The compactification $\bR^2 \sqcup S$ of $\bR^{2}$ may be represented as a 2-disk $D$ with boundary  $S$.  

 The dashed circle is the boundary $\partial D_{R}$ of the disk $D_{R}$  centred at the origin of radius $R\gg 1$ as in Proposition \ref{p:milnorarcs}. The Milnor arcs  live in the annulus between $\partial D_{R}$ and $S$. By enumerating the Milnor arcs as $\gamma_1,\ldots , \gamma_k$ we mean that they are consecutive in the counterclockwise ordering (Definition \ref{d:orderarcs}).
 
 The limit $\lambda\in \bR \cup \{\pm \ity\}$ to which $f_{|\gamma}$ tends along some Milnor arc $\gamma$ is written near the Milnor arc $\gamma$ close to $S$ (see Proposition \ref{p:milnorarcs}).  The index $i(\gamma)$ is attached to each Milnor arc $\gamma$ at the intersection with the doted circle in the middle of the annulus, and
  the respective little arrow indicates the direction of the gradient of the Milnor arc.   We write ``$\Sp$'' or ``$\Va$'' next to a cluster when the corresponding fibre component is splitting or vanishing, respectively. Whenever a cluster contains more than one Milnor arc,  we connect all its arcs by a thicker curve (e.g. like in Figure \ref{fig:ex2}).

\medskip

\begin{example}\label{ex:broughton}
Let  $f(x,y)=x^2y+x$.  This polynomial has  two clusters at the atypical value $\lambda=0$, each of them composed by a single Milnor arc of positive index. Both clusters have the point $p=[0:1:0]$ at infinity, and no other Milnor arcs abut to this point. Thus  $d_p=2$, and \eqref{eq:indabs} is an equality.

The polynomial $f$ has two clusters having the point $q=[1:0:0]$ at infinity,  the corresponding fibre components of which tend to the value $+\ity$. And there are two more clusters with the corresponding fibre components  tending to the value $-\ity$.   These 4 clusters being vanishing clusters at $\lambda = \pm \infty$, all of them have index $-\frac 12$ by Corollary \ref{c:indexcluster}.

 It then follows from Theorem \ref{t:main1}  that $\ind_{\ity}f = 1+2\cdot \frac 12  - 4\cdot \frac 12 =0$.    Comparing to Lemma \ref{l:d_{p}},  there are no index gaps of any kind, and  this example realises the maximal index at infinity that a polynomial of degree $3$ may have, cf Theorem \ref{t:l=1}(a).
\end{example}

 \medskip

\begin{example}\label{exa1}
Let $f(x,y)=y^5+x^2y^3-y$.
The Milnor set $M(f)$ is defined by the equation 
 $x (-1 + 3 x^2 y^2 + 3 y^4)=0$.

\begin{figure}[ht!]
\begin{center}

\tikzset{every picture/.style={line width=0.75pt}} 

\begin{tikzpicture}[x=0.60pt,y=0.60pt,yscale=-1,xscale=1]

\draw   (329.47,29.92) .. controls (401.03,30.55) and (459.31,89.06) .. (459.63,160.62) .. controls (459.96,232.17) and (402.21,289.67) .. (330.64,289.05) .. controls (259.08,288.43) and (200.8,229.91) .. (200.47,158.36) .. controls (200.15,86.8) and (257.9,29.3) .. (329.47,29.92) -- cycle ;
\draw  [dash pattern={on 4.5pt off 4.5pt}] (285.26,159.49) .. controls (285.26,134.75) and (305.32,114.69) .. (330.05,114.69) .. controls (354.79,114.69) and (374.85,134.75) .. (374.85,159.49) .. controls (374.85,184.22) and (354.79,204.28) .. (330.05,204.28) .. controls (305.32,204.28) and (285.26,184.22) .. (285.26,159.49) -- cycle ;
\draw  [dash pattern={on 0.84pt off 2.51pt}] (236.83,159.49) .. controls (236.83,108) and (278.57,66.26) .. (330.05,66.26) .. controls (381.54,66.26) and (423.28,108) .. (423.28,159.49) .. controls (423.28,210.97) and (381.54,252.71) .. (330.05,252.71) .. controls (278.57,252.71) and (236.83,210.97) .. (236.83,159.49) -- cycle ;
\draw [color={rgb, 255:red, 0; green, 0; blue, 0 }  ,draw opacity=1 ]   (330.05,114.69) .. controls (331,115.67) and (317.67,49) .. (329.47,29.92) ;
\draw [color={rgb, 255:red, 0; green, 0; blue, 0 }  ,draw opacity=1 ]   (201.5,153.25) .. controls (209.5,154.25) and (230.5,141.25) .. (247.5,137.25) .. controls (264.5,133.25) and (277.08,135.04) .. (288.67,140.83) ;
\draw [color={rgb, 255:red, 0; green, 0; blue, 0 }  ,draw opacity=1 ]   (201.5,153.25) .. controls (216.5,156.25) and (259,186.75) .. (287.33,176.5) ;
\draw [color={rgb, 255:red, 0; green, 0; blue, 0 }  ,draw opacity=1 ]   (323.67,287.67) .. controls (338.33,241) and (321,220.33) .. (330.05,204.28) ;
\draw [color={rgb, 255:red, 0; green, 0; blue, 0 }  ,draw opacity=1 ]   (459,156.25) .. controls (432.5,158.25) and (397.5,179.75) .. (371.5,174.75) ;
\draw [color={rgb, 255:red, 0; green, 0; blue, 0 }  ,draw opacity=1 ]   (459,156.25) .. controls (440,152.25) and (424,140.25) .. (407.5,137.75) .. controls (391,135.25) and (372.67,134.33) .. (369,138.75) ;
\draw    (237.83,170.33) -- (253.5,170.7) ;
\draw [shift={(255.5,170.75)}, rotate = 181.35] [color={rgb, 255:red, 0; green, 0; blue, 0 }  ][line width=0.75]    (6.56,-1.97) .. controls (4.17,-0.84) and (1.99,-0.18) .. (0,0) .. controls (1.99,0.18) and (4.17,0.84) .. (6.56,1.97)   ;
\draw    (330.05,252.71) -- (332.77,240.35) ;
\draw [shift={(333.2,238.4)}, rotate = 102.4] [color={rgb, 255:red, 0; green, 0; blue, 0 }  ][line width=0.75]    (6.56,-1.97) .. controls (4.17,-0.84) and (1.99,-0.18) .. (0,0) .. controls (1.99,0.18) and (4.17,0.84) .. (6.56,1.97)   ;
\draw    (422.05,142.88) -- (436.5,141.89) ;
\draw [shift={(438.5,141.75)}, rotate = 176.08] [color={rgb, 255:red, 0; green, 0; blue, 0 }  ][line width=0.75]    (6.56,-1.97) .. controls (4.17,-0.84) and (1.99,-0.18) .. (0,0) .. controls (1.99,0.18) and (4.17,0.84) .. (6.56,1.97)   ;
\draw    (324.15,66.21) -- (322.61,80.81) ;
\draw [shift={(322.4,82.8)}, rotate = 276.03] [color={rgb, 255:red, 0; green, 0; blue, 0 }  ][line width=0.75]    (6.56,-1.97) .. controls (4.17,-0.84) and (1.99,-0.18) .. (0,0) .. controls (1.99,0.18) and (4.17,0.84) .. (6.56,1.97)   ;
\draw    (422.67,166.17) -- (410,166.24) ;
\draw [shift={(408,166.25)}, rotate = 359.67] [color={rgb, 255:red, 0; green, 0; blue, 0 }  ][line width=0.75]    (6.56,-1.97) .. controls (4.17,-0.84) and (1.99,-0.18) .. (0,0) .. controls (1.99,0.18) and (4.17,0.84) .. (6.56,1.97)   ;
\draw    (238.83,140.83) -- (228.5,140.27) ;
\draw [shift={(226.5,140.17)}, rotate = 3.09] [color={rgb, 255:red, 0; green, 0; blue, 0 }  ][line width=0.75]    (6.56,-1.97) .. controls (4.17,-0.84) and (1.99,-0.18) .. (0,0) .. controls (1.99,0.18) and (4.17,0.84) .. (6.56,1.97)   ;

\draw (327.47,32.57) node [anchor=north west][inner sep=0.75pt]  [font=\tiny]  {$+\infty $};
\draw (326.63,275) node [anchor=north west][inner sep=0.75pt]  [font=\tiny]  {$-\infty $};
\draw (350,136.23) node [anchor=north west][inner sep=0.75pt]  [font=\tiny]  {$\gamma _{1}$};
\draw (324.83,120.23) node [anchor=north west][inner sep=0.75pt]  [font=\tiny]  {$\gamma _{2}$};
\draw (294.5,141.73) node [anchor=north west][inner sep=0.75pt]  [font=\tiny]  {$\gamma _{3}$};
\draw (291.17,168.23) node [anchor=north west][inner sep=0.75pt]  [font=\tiny]  {$\gamma _{4}$};
\draw (326.33,191.07) node [anchor=north west][inner sep=0.75pt]  [font=\tiny]  {$\gamma _{5}$};
\draw (350,168.23) node [anchor=north west][inner sep=0.75pt]  [font=\tiny]  {$\gamma _{6}$};
\draw (305.3,13.13) node [anchor=north west][inner sep=0.75pt]  [font=\tiny]  {$[0:1:0]$};
\draw (466.5,149) node [anchor=north west][inner sep=0.75pt]  [font=\tiny]  {$[1:0:0]$};
\draw (329.5,69.9) node [anchor=north west][inner sep=0.75pt]  [font=\fontsize{0.41em}{0.49em}\selectfont]  {$-\frac{1}{2}$};
\draw (305,229.73) node [anchor=north west][inner sep=0.75pt]  [font=\fontsize{0.41em}{0.49em}\selectfont]  {$-\frac{1}{2}$};
\draw (247.81,119.76) node [anchor=north west][inner sep=0.75pt]  [font=\fontsize{0.41em}{0.49em}\selectfont]  {$+\frac{1}{2}$};
\draw (390,120) node [anchor=north west][inner sep=0.75pt]  [font=\fontsize{0.41em}{0.49em}\selectfont]  {$+\frac{1}{2}$};
\draw (368.5,120) node [anchor=north west][inner sep=0.75pt]  [font=\tiny]  {$Sp$};
\draw (375.85,160) node [anchor=north west][inner sep=0.75pt]  [font=\tiny]  {$Sp$};
\draw (275,182.4) node [anchor=north west][inner sep=0.75pt]  [font=\tiny]  {$Sp$};
\draw (265,140.9) node [anchor=north west][inner sep=0.75pt]  [font=\tiny]  {$Sp$};
\draw (395,176.4) node [anchor=north west][inner sep=0.75pt]  [font=\fontsize{0.41em}{0.49em}\selectfont]  {$+\frac{1}{2}$};
\draw (247.69,179.56) node [anchor=north west][inner sep=0.75pt]  [font=\fontsize{0.41em}{0.49em}\selectfont]  {$+\frac{1}{2}$};
\draw (445,161.4) node [anchor=north west][inner sep=0.75pt]  [font=\tiny]  {$0$};
\draw (445,140) node [anchor=north west][inner sep=0.75pt]  [font=\tiny]  {$0$};
\draw (203.63,135) node [anchor=north west][inner sep=0.75pt]  [font=\tiny]  {$0$};
\draw (202.47,165) node [anchor=north west][inner sep=0.75pt]  [font=\tiny]  {$0$};
\draw (140,148) node [anchor=north west][inner sep=0.75pt]  [font=\tiny]  {$[1:0:0]$};
\draw (308.5,299.4) node [anchor=north west][inner sep=0.75pt]  [font=\tiny]  {$[0:1:0]$};

\end{tikzpicture}
\end{center}
\caption{Milnor arcs of $f= y^5+x^2y^3-y$}\label{fig:ex1}
\end{figure}
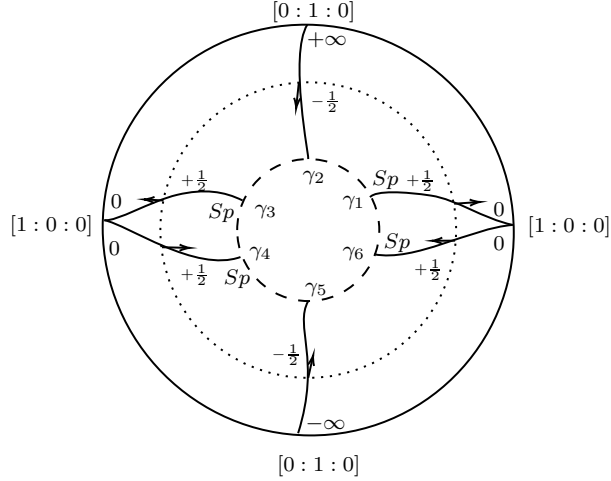

\end{example}

We have $d=5$, $\cL_{f} = \{p\}$ with $p:=[1:0:0]$, and there are two other complex non-real points at infinity due to the 
factor $y^2+x^2$ of  the top homogeneous part $f_{5}$.
In the chart  $\{x=1\}$ of  $\bP^2$, the germ  at $p$ of the Milnor set $\overline{M(f)}$ is defined by the equation
$\hat{h}(y,z)=-z^4+3y^2+3y^4=0$, thus
$\Cone \overline{M_{\bC}(f)}_{p}=2 \{y=0\}$. There are 4 clusters having the point $p \in \cL_f$ at infinity, each containing a single Milnor arc, all being splitting clusters at the value $0$, and one pair of clusters is tangent to a semi-line, and the second pair of clusters is tangent to the other semi-line. Compare also to Proposition \ref{l:tangent-nontangent}.


The fibres at infinity $F_+ \cup F_-$ consist of two components; one corresponds to the cluster $\{\gamma_2\}$, see Figure \ref{fig:ex1}, and covers the upper semi-circle of $S$, and the other corresponds to the cluster $\{\gamma_5\}$ and covers the lower semi-circle of $S$.
 By Theorem \ref{t:main1}, one then has $\ind_{\ity}(f)=2$, which is the highest possible index at infinity of a polynomial of degree $d=5$ with $|\cL_f| =1$, according to Theorem \ref{t:l=1}(b).
Since $d_{p} = d_{Re} = 3$, $\deg (R_{p}^{\red})=1$,  $S_{p}= \emptyset$, and $K_{p}=\emptyset$, the inequality \eqref{eq:upbound} reads:
\[\ind_\ity (f) \le 1 + 3 - 2  = 2.\]
This is an equality in our case, and the same are \eqref{eq:indabs},   \eqref{eq:upbound2} and \eqref{eq:upbound3}.

\medskip 

\begin{example}\label{s:exam2}
Let  $f(x,y)=(x-y^2)\Bigl( (x-y^2)(y^2+1)-1\Bigr)$. 

Its Milnor set is defined by the equation:
$y (1 - 4 x^2 + 2 x^3 + 2 y^2 + 2 x y^2 - 8 x^2 y^2 + 2 y^4 + 6 x y^4)=0.$

\begin{figure}[ht!]
\begin{center}

\tikzset{every picture/.style={line width=0.75pt}} 

\begin{tikzpicture}[x=0.60pt,y=0.60pt,yscale=-1,xscale=1]

\draw   (329.47,29.92) .. controls (401.03,30.55) and (459.31,89.06) .. (459.63,160.62) .. controls (459.96,232.17) and (402.21,289.67) .. (330.64,289.05) .. controls (259.08,288.43) and (200.8,229.91) .. (200.47,158.36) .. controls (200.15,86.8) and (257.9,29.3) .. (329.47,29.92) -- cycle ;
\draw  [dash pattern={on 4.5pt off 4.5pt}] (285.26,159.49) .. controls (285.26,134.75) and (305.32,114.69) .. (330.05,114.69) .. controls (354.79,114.69) and (374.85,134.75) .. (374.85,159.49) .. controls (374.85,184.22) and (354.79,204.28) .. (330.05,204.28) .. controls (305.32,204.28) and (285.26,184.22) .. (285.26,159.49) -- cycle ;
\draw  [dash pattern={on 0.84pt off 2.51pt}] (236.83,159.49) .. controls (236.83,108) and (278.57,66.26) .. (330.05,66.26) .. controls (381.54,66.26) and (423.28,108) .. (423.28,159.49) .. controls (423.28,210.97) and (381.54,252.71) .. (330.05,252.71) .. controls (278.57,252.71) and (236.83,210.97) .. (236.83,159.49) -- cycle ;
\draw [color={rgb, 255:red, 0; green, 0; blue, 0 }  ,draw opacity=1 ]   (330.05,114.69) .. controls (331,115.67) and (317.67,49) .. (329.47,29.92) ;
\draw [color={rgb, 255:red, 0; green, 0; blue, 0 }  ,draw opacity=1 ]   (323.67,287.67) .. controls (338.33,241) and (321,220.33) .. (330.05,204.28) ;
\draw [color={rgb, 255:red, 0; green, 0; blue, 0 }  ,draw opacity=1 ]   (459,156.25) .. controls (441.2,179.4) and (397.5,179.75) .. (371.5,174.75) ;
\draw [color={rgb, 255:red, 0; green, 0; blue, 0 }  ,draw opacity=1 ]   (459,156.25) .. controls (444.4,129) and (387.6,125.4) .. (369,138.75) ;
\draw    (330.05,252.71) -- (332.1,266.42) ;
\draw [shift={(332.4,268.4)}, rotate = 261.5] [color={rgb, 255:red, 0; green, 0; blue, 0 }  ][line width=0.75]    (6.56,-1.97) .. controls (4.17,-0.84) and (1.99,-0.18) .. (0,0) .. controls (1.99,0.18) and (4.17,0.84) .. (6.56,1.97)   ;
\draw    (415.25,196.18) -- (430,196.73) ;
\draw [shift={(432,196.8)}, rotate = 182.14] [color={rgb, 255:red, 0; green, 0; blue, 0 }  ][line width=0.75]    (6.56,-1.97) .. controls (4.17,-0.84) and (1.99,-0.18) .. (0,0) .. controls (1.99,0.18) and (4.17,0.84) .. (6.56,1.97)   ;
\draw    (324.55,66.21) -- (320.24,53.49) ;
\draw [shift={(319.6,51.6)}, rotate = 71.27] [color={rgb, 255:red, 0; green, 0; blue, 0 }  ][line width=0.75]    (6.56,-1.97) .. controls (4.17,-0.84) and (1.99,-0.18) .. (0,0) .. controls (1.99,0.18) and (4.17,0.84) .. (6.56,1.97)   ;
\draw    (421.57,175.37) -- (434.4,175.91) ;
\draw [shift={(436.4,176)}, rotate = 182.44] [color={rgb, 255:red, 0; green, 0; blue, 0 }  ][line width=0.75]    (6.56,-1.97) .. controls (4.17,-0.84) and (1.99,-0.18) .. (0,0) .. controls (1.99,0.18) and (4.17,0.84) .. (6.56,1.97)   ;
\draw [color={rgb, 255:red, 0; green, 0; blue, 0 }  ,draw opacity=1 ]   (359.25,126) .. controls (399.25,96) and (458,129.8) .. (459,156.25) ;
\draw    (412.05,116.58) -- (427.6,117.48) ;
\draw [shift={(429.6,117.6)}, rotate = 183.34] [color={rgb, 255:red, 0; green, 0; blue, 0 }  ][line width=0.75]    (6.56,-1.97) .. controls (4.17,-0.84) and (1.99,-0.18) .. (0,0) .. controls (1.99,0.18) and (4.17,0.84) .. (6.56,1.97)   ;
\draw [color={rgb, 255:red, 0; green, 0; blue, 0 }  ,draw opacity=1 ]   (362.75,189.75) .. controls (386.75,211.5) and (461.6,191) .. (459,156.25) ;
\draw [color={rgb, 255:red, 0; green, 0; blue, 0 }  ,draw opacity=1 ]   (375.25,156.5) -- (459,156.25) ;
\draw    (423.6,156) -- (435.25,153.26) ;
\draw [shift={(437.2,152.8)}, rotate = 166.76] [color={rgb, 255:red, 0; green, 0; blue, 0 }  ][line width=0.75]    (6.56,-1.97) .. controls (4.17,-0.84) and (1.99,-0.18) .. (0,0) .. controls (1.99,0.18) and (4.17,0.84) .. (6.56,1.97)   ;
\draw    (421.47,133.57) -- (433.6,133.6) ;
\draw [shift={(435.6,133.6)}, rotate = 180.14] [color={rgb, 255:red, 0; green, 0; blue, 0 }  ][line width=0.75]    (6.56,-1.97) .. controls (4.17,-0.84) and (1.99,-0.18) .. (0,0) .. controls (1.99,0.18) and (4.17,0.84) .. (6.56,1.97)   ;
\draw [color={rgb, 255:red, 0; green, 0; blue, 0 }  ,draw opacity=1 ]   (200.47,158.36) -- (285.26,159.49) ;
\draw  [draw opacity=0][line width=1.5]  (399,182.05) .. controls (407.74,181.17) and (414.65,169.73) .. (414.65,155.75) .. controls (414.65,141.27) and (407.24,129.53) .. (398.07,129.4) -- (397.93,155.75) -- cycle ; \draw  [line width=1.5]  (399,182.05) .. controls (407.74,181.17) and (414.65,169.73) .. (414.65,155.75) .. controls (414.65,141.27) and (407.24,129.53) .. (398.07,129.4) ;  
\draw    (236.83,159.49) -- (221.98,161.7) ;
\draw [shift={(220,162)}, rotate = 351.5] [color={rgb, 255:red, 0; green, 0; blue, 0 }  ][line width=0.75]    (6.56,-1.97) .. controls (4.17,-0.84) and (1.99,-0.18) .. (0,0) .. controls (1.99,0.18) and (4.17,0.84) .. (6.56,1.97)   ;
\draw  [draw opacity=0][line width=1.5]  (347,220.31) .. controls (340.1,222.54) and (332.7,223.75) .. (325,223.75) .. controls (287.58,223.75) and (257.25,195.15) .. (257.25,159.88) .. controls (257.25,124.6) and (287.58,96) .. (325,96) .. controls (332.7,96) and (340.1,97.21) .. (347,99.44) -- (325,159.88) -- cycle ; \draw  [line width=1.5]  (347,220.31) .. controls (340.1,222.54) and (332.7,223.75) .. (325,223.75) .. controls (287.58,223.75) and (257.25,195.15) .. (257.25,159.88) .. controls (257.25,124.6) and (287.58,96) .. (325,96) .. controls (332.7,96) and (340.1,97.21) .. (347,99.44) ;  

\draw (327.47,32.57) node [anchor=north west][inner sep=0.75pt]  [font=\tiny]  {$+\infty $};
\draw (326,275) node [anchor=north west][inner sep=0.75pt]  [font=\tiny]  {$+\infty $};
\draw (355,151.73) node [anchor=north west][inner sep=0.75pt]  [font=\tiny]  {$\gamma _{1}$};
\draw (350,136.23) node [anchor=north west][inner sep=0.75pt]  [font=\tiny]  {$\gamma _{2}$};
\draw (340,124.73) node [anchor=north west][inner sep=0.75pt]  [font=\tiny]  {$\gamma _{3}$};
\draw (325.17,118.73) node [anchor=north west][inner sep=0.75pt]  [font=\tiny]  {$\gamma _{4}$};
\draw (290.33,155.57) node [anchor=north west][inner sep=0.75pt]  [font=\tiny]  {$\gamma _{5}$};
\draw (324.33,192.73) node [anchor=north west][inner sep=0.75pt]  [font=\tiny]  {$\gamma _{6}$};
\draw (330.45,71.26) node [anchor=north west][inner sep=0.75pt]  [font=\fontsize{0.29em}{0.35em}\selectfont]  {$-\frac{1}{2}$};
\draw (332,230) node [anchor=north west][inner sep=0.75pt]  [font=\fontsize{0.29em}{0.35em}\selectfont]  {$-\frac{1}{2}$};
\draw (236,165) node [anchor=north west][inner sep=0.75pt]  [font=\fontsize{0.29em}{0.35em}\selectfont]  {$+\frac{1}{2}$};
\draw (375,115.1) node [anchor=north west][inner sep=0.75pt]  [font=\fontsize{0.29em}{0.35em}\selectfont]  {$-\frac{1}{2}$};
\draw (356,105) node [anchor=north west][inner sep=0.75pt]  [font=\tiny]  {$\text{Sp}$};
\draw (359.35,196.39) node [anchor=north west][inner sep=0.75pt]  [font=\tiny]  {$\text{Sp}$};
\draw (440,188) node [anchor=north west][inner sep=0.75pt]  [font=\tiny]  {$0$};
\draw (440,115) node [anchor=north west][inner sep=0.75pt]  [font=\tiny]  {$0$};
\draw (375,95) node [anchor=north west][inner sep=0.75pt]  [font=\fontsize{0.29em}{0.35em}\selectfont]  {$+\frac{1}{2}$};
\draw (385,135) node [anchor=north west][inner sep=0.75pt]  [font=\fontsize{0.29em}{0.35em}\selectfont]  {$+\frac{1}{2}$};
\draw (385,160.4) node [anchor=north west][inner sep=0.75pt]  [font=\fontsize{0.29em}{0.35em}\selectfont]  {$-\frac{1}{2}$};
\draw (380,200) node [anchor=north west][inner sep=0.75pt]  [font=\fontsize{0.29em}{0.35em}\selectfont]  {$+\frac{1}{2}$};
\draw (200.33,145) node [anchor=north west][inner sep=0.75pt]  [font=\tiny]  {$+\infty $};
\draw (350,167.73) node [anchor=north west][inner sep=0.75pt]  [font=\tiny]  {$\gamma _{8}$};
\draw (345,181.73) node [anchor=north west][inner sep=0.75pt]  [font=\tiny]  {$\gamma _{7}$};
\draw (419,139.27) node [anchor=north west][inner sep=0.75pt]  [font=\tiny]  {$+\infty $};
\draw (420,157.47) node [anchor=north west][inner sep=0.75pt]  [font=\tiny]  {$+\infty $};
\draw (465.4,154.2) node [anchor=north west][inner sep=0.75pt]  [font=\tiny]  {$[1:0:0]$};
\draw (140,155.8) node [anchor=north west][inner sep=0.75pt]  [font=\tiny]  {$[1:0:0]$};
\draw (308,11) node [anchor=north west][inner sep=0.75pt]  [font=\tiny]  {$[ 0:1:0]$};
\draw (311.4,296.6) node [anchor=north west][inner sep=0.75pt]  [font=\tiny]  {$[ 0:1:0]$};
\draw (415,50) node [anchor=north west][inner sep=0.75pt]  [font=\tiny]  {S};

\end{tikzpicture}

\caption{Milnor arcs of $f= (x-y^2)((x-y^2)(y^2+1)-1)$}\label{fig:ex2}
\end{center}
\end{figure}
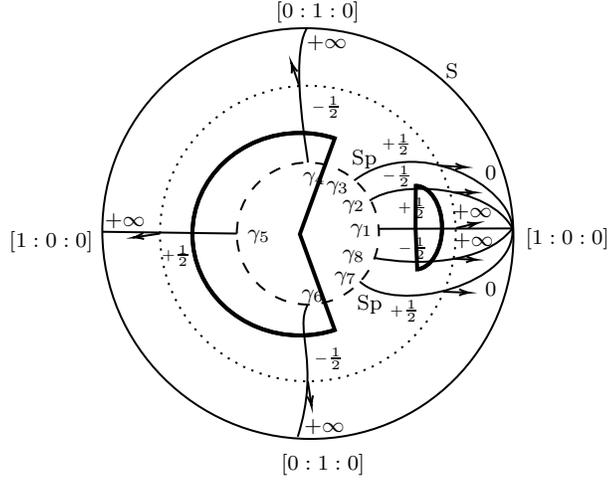

One has $d = d_{Re}=6$, and $\cL_{f} = \{p\}$ where $p:=[1:0:0]$, with $d_{p} =6$ (cf Definition \ref{d:dp}).
In the chart $\{x=1\}$ of $\bP^2$,  the germ at $p$ of $\overline{M(f)}$ is defined by the equation:
$$\hat{h}(y,z)=   y (6 y^4 - 8 y^2 z + 2 y^4 z + 2 z^2 + 2 y^2 z^2 - 4 z^3 + 
   2 y^2 z^3 + z^5) =0,$$
and therefore $\Cone \overline{M_{\bC}(f)}_{p}= L^\ity \cup\{y=0\}$.

The polynomial $f$ has a global minimum at the point $(\frac{1}{2},0)\in \bR^{2}$, with critical value $-\frac14$.  The fibre of $f$ is empty over the interval $]-\infty, -\frac14[$. Over 
$[-\frac{1}{4}, 0[$, the fibre of $f$ is compact and connected, having  two arcs outside the disk $D_{R}$, one of which is splitting along the cluster $\{\gamma_7\}$, and the other is splitting along the cluster $\{\gamma_3\}$; both Milnor clusters are tangent to $L^\ity$  at the point $p$.  
Over the interval $]0, +\infty[$, the fibre of $f$  has two connected components: one of them corresponds to the cluster $\{\gamma_8,\gamma_1,\gamma_2\}$, and is vanishing\footnote{The cluster $\{\gamma_8,\gamma_1,\gamma_2\}$ may be contrasted to  Proposition \ref{p:tangency} in which such a situation cannot happen for a cluster associated to a finite limit value of $f$.} at the point $p$ with the value of  $f$ tending to $+\ity$.   The other component corresponds to the  vanishing cluster $\{\gamma_4, \gamma_5, \gamma_6\}$  and covers the entire line at infinity $L^\ity$ as $t\to +\ity$.  
 
 By direct computations we see that the germ $\overline{M_{\bC}(f)_{p}}$  has 3 non-singular branches: one is $\{y=0\}$ and contains the Milnor arcs  $\gamma_1$ and  $\gamma_5$. The other two branches  are tangent to the line at infinity\footnote{We get $\mult (\overline{M_{\bC}(f)_{p}}, L^{\ity}) = 1+2+2 =5$,  each tangency producing multiplicity 2 in \eqref{eq:Mil}.} and have both their two arcs on the same side of it, namely $\gamma_8$ with $\gamma_2$, and $\gamma_7$ with $\gamma_3$, respectively. There is a single Milnor arc on the other side of $L^\ity$, which fact may be contrasted to  Lemma \ref{l:d_{p}}(b),(c) about index gaps, see also Remark \ref{r:situat} and the computation of sign gaps in the proof of Theorem \ref{t:upbound}.

   By Theorem \ref{t:main1} we get $\ind_{\ity}(f)= 1+ 2\frac12 -2\frac12 =1$.   We have    $\deg S_{p}= 2$ because of the two non-singular real  branches of $\overline{M_{\bC}(f)}_{p}$ which are tangent to $L^{\ity}$, no non-real branches $K_{p}=\emptyset$, and   $\deg (R_{p}^{\red})=2$.  The inequality \eqref{eq:upbound} of Theorem \ref{t:upbound} then reads:
    \[\ind_{\ity}(f) \le 1+6 - 2 - 2 = 3, \] 
 since  the ``sign gaps'' of this formula count for zero in our case. The same inequality $\ind_{\ity}(f) \le 3$ is provided by Theorem \ref{t:l=1}(b).

 Nevertheless, if we consider the sharper estimation of the sign gap in the proof of Theorem \ref{t:upbound}, namely 
 $\frac12 (\lfloor  \frac{r_{p}}{2}\rfloor +   \lfloor  \frac{s_{p}}{2}  \rfloor )$, and since in our case we have $r_{p}=5$ and $s_{p} =1$, we  get a sign gap of 1.
 With this extra gap, we then obtain: 
 \[ \ind_{\ity}(f) \le 1 +6-2-2 -1 =2,\] 
which is indeed a better estimation,  still away by 1 from the actual index $\ind_{\ity}(f) =1$ of this example as computed above via Theorem \ref{t:main1}.  
 \end{example}

\begin{example} \label{s:exa4}
Let  $f(x,y)=x^2 + (xy-1)^2$.  
One has $d=d_{Re}=4$, and $\cL_f=\{ p,q\}$ where $p:=[1:0:0]$  and $q:=[0:1:0]$, with $d_p=d_q=2$.
Note that $f$ has empty fibres over $]-\ity, 0[$.  

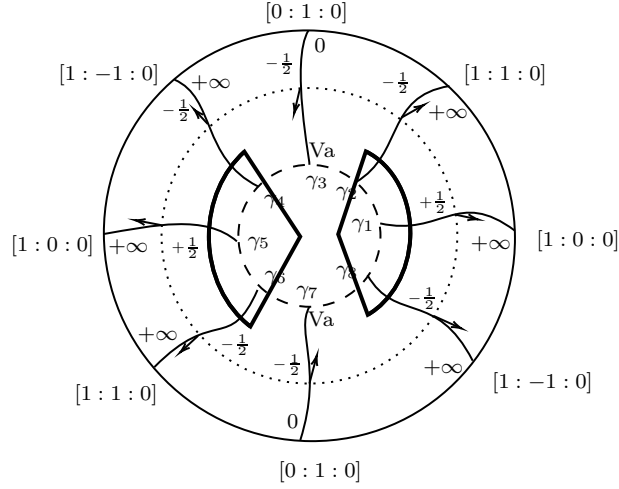
\begin{figure}[ht!] 
\begin{center}

\tikzset{every picture/.style={line width=0.75pt}} 

\begin{tikzpicture}[x=0.60pt,y=0.60pt,yscale=-1,xscale=1]

\draw   (329.47,29.92) .. controls (401.03,30.55) and (459.31,89.06) .. (459.63,160.62) .. controls (459.96,232.17) and (402.21,289.67) .. (330.64,289.05) .. controls (259.08,288.43) and (200.8,229.91) .. (200.47,158.36) .. controls (200.15,86.8) and (257.9,29.3) .. (329.47,29.92) -- cycle ;
\draw  [dash pattern={on 4.5pt off 4.5pt}] (285.26,159.49) .. controls (285.26,134.75) and (305.32,114.69) .. (330.05,114.69) .. controls (354.79,114.69) and (374.85,134.75) .. (374.85,159.49) .. controls (374.85,184.22) and (354.79,204.28) .. (330.05,204.28) .. controls (305.32,204.28) and (285.26,184.22) .. (285.26,159.49) -- cycle ;
\draw  [dash pattern={on 0.84pt off 2.51pt}] (236.83,159.49) .. controls (236.83,108) and (278.57,66.26) .. (330.05,66.26) .. controls (381.54,66.26) and (423.28,108) .. (423.28,159.49) .. controls (423.28,210.97) and (381.54,252.71) .. (330.05,252.71) .. controls (278.57,252.71) and (236.83,210.97) .. (236.83,159.49) -- cycle ;
\draw [color={rgb, 255:red, 0; green, 0; blue, 0 }  ,draw opacity=1 ]   (330.05,114.69) .. controls (331,115.67) and (317.67,49) .. (329.47,29.92) ;
\draw [color={rgb, 255:red, 0; green, 0; blue, 0 }  ,draw opacity=1 ]   (245,61) .. controls (271,72.33) and (261.67,110.33) .. (297.67,128.33) ;
\draw [color={rgb, 255:red, 0; green, 0; blue, 0 }  ,draw opacity=1 ]   (200.47,158.36) .. controls (231.67,153) and (264.33,145.67) .. (284.33,163) ;
\draw [color={rgb, 255:red, 0; green, 0; blue, 0 }  ,draw opacity=1 ]   (232.33,242.33) .. controls (270.33,203.67) and (279.67,233.67) .. (297.33,193.67) ;
\draw [color={rgb, 255:red, 0; green, 0; blue, 0 }  ,draw opacity=1 ]   (324.33,288.33) .. controls (339,241.67) and (321,220.33) .. (330.05,204.28) ;
\draw [color={rgb, 255:red, 0; green, 0; blue, 0 }  ,draw opacity=1 ]   (433,238.33) .. controls (409.67,193) and (384.33,215) .. (366.67,184.33) ;
\draw [color={rgb, 255:red, 0; green, 0; blue, 0 }  ,draw opacity=1 ]   (459.67,156.33) .. controls (423,129) and (413.67,160.33) .. (374.67,151.67) ;
\draw [color={rgb, 255:red, 0; green, 0; blue, 0 }  ,draw opacity=1 ]   (418.33,65) .. controls (377.67,71.67) and (389.67,107.67) .. (359.33,125.67) ;
\draw    (266,89) -- (258.57,81.06) ;
\draw [shift={(257.2,79.6)}, rotate = 46.89] [color={rgb, 255:red, 0; green, 0; blue, 0 }  ][line width=0.75]    (6.56,-1.97) .. controls (4.17,-0.84) and (1.99,-0.18) .. (0,0) .. controls (1.99,0.18) and (4.17,0.84) .. (6.56,1.97)   ;
\draw    (236.8,152.8) -- (221.96,149.97) ;
\draw [shift={(220,149.6)}, rotate = 10.78] [color={rgb, 255:red, 0; green, 0; blue, 0 }  ][line width=0.75]    (6.56,-1.97) .. controls (4.17,-0.84) and (1.99,-0.18) .. (0,0) .. controls (1.99,0.18) and (4.17,0.84) .. (6.56,1.97)   ;
\draw    (259.87,222.6) -- (249.87,231.84) ;
\draw [shift={(248.4,233.2)}, rotate = 317.25] [color={rgb, 255:red, 0; green, 0; blue, 0 }  ][line width=0.75]    (6.56,-1.97) .. controls (4.17,-0.84) and (1.99,-0.18) .. (0,0) .. controls (1.99,0.18) and (4.17,0.84) .. (6.56,1.97)   ;
\draw    (330.05,252.71) -- (334.29,236.73) ;
\draw [shift={(334.8,234.8)}, rotate = 104.84] [color={rgb, 255:red, 0; green, 0; blue, 0 }  ][line width=0.75]    (6.56,-1.97) .. controls (4.17,-0.84) and (1.99,-0.18) .. (0,0) .. controls (1.99,0.18) and (4.17,0.84) .. (6.56,1.97)   ;
\draw    (408.47,210.07) -- (424.18,217.18) ;
\draw [shift={(426,218)}, rotate = 204.35] [color={rgb, 255:red, 0; green, 0; blue, 0 }  ][line width=0.75]    (6.56,-1.97) .. controls (4.17,-0.84) and (1.99,-0.18) .. (0,0) .. controls (1.99,0.18) and (4.17,0.84) .. (6.56,1.97)   ;
\draw    (421.65,146.04) -- (434.44,148.78) ;
\draw [shift={(436.4,149.2)}, rotate = 192.09] [color={rgb, 255:red, 0; green, 0; blue, 0 }  ][line width=0.75]    (6.56,-1.97) .. controls (4.17,-0.84) and (1.99,-0.18) .. (0,0) .. controls (1.99,0.18) and (4.17,0.84) .. (6.56,1.97)   ;
\draw    (388.05,84.98) -- (400.66,77.79) ;
\draw [shift={(402.4,76.8)}, rotate = 150.32] [color={rgb, 255:red, 0; green, 0; blue, 0 }  ][line width=0.75]    (6.56,-1.97) .. controls (4.17,-0.84) and (1.99,-0.18) .. (0,0) .. controls (1.99,0.18) and (4.17,0.84) .. (6.56,1.97)   ;
\draw    (324.05,66.71) -- (320.85,80.45) ;
\draw [shift={(320.4,82.4)}, rotate = 283.11] [color={rgb, 255:red, 0; green, 0; blue, 0 }  ][line width=0.75]    (6.56,-1.97) .. controls (4.17,-0.84) and (1.99,-0.18) .. (0,0) .. controls (1.99,0.18) and (4.17,0.84) .. (6.56,1.97)   ;
\draw  [draw opacity=0][line width=1.5]  (292.86,216.73) .. controls (277.09,204.66) and (266.67,183.76) .. (266.67,160) .. controls (266.67,138.2) and (275.44,118.81) .. (289.06,106.44) -- (324.26,160) -- cycle ; \draw  [line width=1.5]  (292.86,216.73) .. controls (277.09,204.66) and (266.67,183.76) .. (266.67,160) .. controls (266.67,138.2) and (275.44,118.81) .. (289.06,106.44) ;  
\draw  [draw opacity=0][line width=1.5]  (365.63,106.25) .. controls (382.92,114.72) and (394.65,136.02) .. (393.62,160.43) .. controls (392.7,182.18) and (381.88,200.62) .. (366.84,209.31) -- (348.19,158.51) -- cycle ; \draw  [line width=1.5]  (365.63,106.25) .. controls (382.92,114.72) and (394.65,136.02) .. (393.62,160.43) .. controls (392.7,182.18) and (381.88,200.62) .. (366.84,209.31) ;  

\draw (331.47,33.32) node [anchor=north west][inner sep=0.75pt]  [font=\tiny]  {$0$};
\draw (314.13,270) node [anchor=north west][inner sep=0.75pt]  [font=\tiny]  {$0$};
\draw (253.47,56.07) node [anchor=north west][inner sep=0.75pt]  [font=\tiny]  {$+\infty $};
\draw (403,75) node [anchor=north west][inner sep=0.75pt]  [font=\tiny]  {$+\infty $};
\draw (201.47,160) node [anchor=north west][inner sep=0.75pt]  [font=\tiny]  {$+\infty $};
\draw (220,215) node [anchor=north west][inner sep=0.75pt]  [font=\tiny]  {$+\infty $};
\draw (400,236.07) node [anchor=north west][inner sep=0.75pt]  [font=\tiny]  {$+\infty $};
\draw (430,158.07) node [anchor=north west][inner sep=0.75pt]  [font=\tiny]  {$+\infty $};
\draw (355,146.73) node [anchor=north west][inner sep=0.75pt]  [font=\tiny]  {$\gamma _{1}$};
\draw (345,126.73) node [anchor=north west][inner sep=0.75pt]  [font=\tiny]  {$\gamma _{2}$};
\draw (326,120) node [anchor=north west][inner sep=0.75pt]  [font=\tiny]  {$\gamma _{3}$};
\draw (299.67,131.73) node [anchor=north west][inner sep=0.75pt]  [font=\tiny]  {$\gamma _{4}$};
\draw (289.33,158.07) node [anchor=north west][inner sep=0.75pt]  [font=\tiny]  {$\gamma _{5}$};
\draw (300,180) node [anchor=north west][inner sep=0.75pt]  [font=\tiny]  {$\gamma _{6}$};
\draw (320,189) node [anchor=north west][inner sep=0.75pt]  [font=\tiny]  {$\gamma _{7}$};
\draw (345,176.73) node [anchor=north west][inner sep=0.75pt]  [font=\tiny]  {$\gamma _{8}$};
\draw (337.83,105) node  [font=\tiny] [align=left] {\begin{minipage}[lt]{9.75pt}\setlength\topsep{0pt}
Va
\end{minipage}};
\draw (337.22,212.11) node  [font=\tiny] [align=left] {\begin{minipage}[lt]{9.75pt}\setlength\topsep{0pt}
Va
\end{minipage}};
\draw (308.67,300.07) node [anchor=north west][inner sep=0.75pt]  [font=\tiny]  {$[0:1:0]$};
\draw (471.33,154.07) node [anchor=north west][inner sep=0.75pt]  [font=\tiny]  {$[1:0:0]$};
\draw (425.33,49.4) node [anchor=north west][inner sep=0.75pt]  [font=\tiny]  {$[ 1:1:0]$};
\draw (443.33,243.4) node [anchor=north west][inner sep=0.75pt]  [font=\tiny]  {$[ 1:-1:0]$};
\draw (397.33,130.07) node [anchor=north west][inner sep=0.75pt]  [font=\fontsize{0.44em}{0.53em}\selectfont]  {$+\frac{1}{2}$};
\draw (373.33,54.73) node [anchor=north west][inner sep=0.75pt]  [font=\fontsize{0.44em}{0.53em}\selectfont]  {$-\frac{1}{2}$};
\draw (300,42.73) node [anchor=north west][inner sep=0.75pt]  [font=\fontsize{0.44em}{0.53em}\selectfont]  {$-\frac{1}{2}$};
\draw (235,72.07) node [anchor=north west][inner sep=0.75pt]  [font=\fontsize{0.44em}{0.53em}\selectfont]  {$-\frac{1}{2}$};
\draw (241.33,158.73) node [anchor=north west][inner sep=0.75pt]  [font=\fontsize{0.44em}{0.53em}\selectfont]  {$+\frac{1}{2}$};
\draw (272,220.07) node [anchor=north west][inner sep=0.75pt]  [font=\fontsize{0.44em}{0.53em}\selectfont]  {$-\frac{1}{2}$};
\draw (305,232.47) node [anchor=north west][inner sep=0.75pt]  [font=\fontsize{0.44em}{0.53em}\selectfont]  {$-\frac{1}{2}$};
\draw (390,190.73) node [anchor=north west][inner sep=0.75pt]  [font=\fontsize{0.44em}{0.53em}\selectfont]  {$-\frac{1}{2}$};
\draw (300,10) node [anchor=north west][inner sep=0.75pt]  [font=\tiny]  {$[0:1:0]$};
\draw (140,157.4) node [anchor=north west][inner sep=0.75pt]  [font=\tiny]  {$[1:0:0]$};
\draw (180,251.4) node [anchor=north west][inner sep=0.75pt]  [font=\tiny]  {$[ 1:1:0]$};
\draw (170,49.4) node [anchor=north west][inner sep=0.75pt]  [font=\tiny]  {$[ 1:-1:0]$};

\end{tikzpicture}

\end{center}
\caption{The Milnor arcs of $f= x^2 + (xy-1)^2$}\label{f:exam4}
\end{figure}

\bigskip
The Milnor set $M(f)$ is defined by the equation $x^2 + x y - x^3 y - y^2 + x y^3=0$.
At $q \in L^\ity$ there are two clusters with a single Milnor arc, namely $\{\gamma_3\}$ and $\{\gamma_7\}$,  and the corresponding fibre components are both vanishing  at the value $0$.  There are two more clusters, namely $\{\gamma_8,\gamma_1,\gamma_2\}$, and $\{\gamma_4,\gamma_5,\gamma_6\}$,
the fibre components of which are both tending to $+\ity$ and cover half the circle $S$ each of them. 

By  Theorem \ref{t:main1} we get: 
$$\ind_{\ity}(f) =  1+ \frac12 \sum \Sp(p, \lambda) - \frac12 \sum\Va(p, \lambda)  - \frac12  \Va(\pm \infty) =1 +  \frac12 \cdot 0 - \frac12 \cdot  2 - \frac12 \cdot  2=-1,$$
where the sums are over $\{p\in \cL_{f}, \lambda\in \bR\}$.

We have $\Cone \overline{M_{\bC}(f)}_q=\{x=0\}$ and $\Cone \overline{M_{\bC}(f)}_p=\{y=0\}$, with multiplicity 1, and the Milnor set germs at $p$ and $q$ are non-singular and transversal to $L^\ity_{\bC}$.
Therefore all the sets $S_{p},  K_p, S_{q}$ and $K_q$ are empty, and $\deg R^{\red}_{q}=\deg R^{\red}_{p}=1$. Then  Theorem \ref{t:upbound} and Corollary \ref{c:upbound2}, with  \eqref{eq:upbound2} and \eqref{eq:upbound3},  yield all the same bound: 
 \[ \ind_{\ity}(f) \le 1+ d_{Re} - |\cL_{f}| = 1+4-4=1. \]

By considering the genuine sign gaps as in Lemma \ref{l:d_{p}}(a),  one actually obtains a gap of 2  at the point $q \in \cL_f$ due to the two Milnor arcs with index $-\frac12$ at the value 0 of $f$.  We then get
\[ \ind_{\ity}(f) \le 1+4-4-2=-1,\]
 which coincides with the actual index at infinity of $f$ computed above.

\end{example}

\bigskip


\end{document}